\newcommand{\less}{\leqslant}
\newcommand{\gtr}{\geqslant}
\newtheorem{statement}{}[section]
\newtheorem{theorem}[statement]{Theorem}
\newtheorem{prop}[statement]{Proposition}
\newtheorem{corol}[statement]{Corollary}
\newtheorem{lemma}[statement]{Lemma}
\newtheorem{rem}{Remark}
\newcommand{\C}{\mathbb C}
\newcommand{\R}{\mathbb R}
\newcommand{\N}{\mathbb N}
\newcommand{\T}{\mathbb T}
\newcommand\e{{\rm e}}
\newcommand{\eps}{\varepsilon}
\newcommand\ind{{\rm 1\kern-.30em I}}
\renewcommand{\Re}{\operatorname{Re}}
\renewcommand{\Im}{\operatorname{Im}}
\newcommand\dis{\displaystyle}
\newcommand{\biindice}[3]%
{%

\begin{array}[t]{c}
{\displaystyle #1}\\
{\scriptstyle #2}\\
{\scriptstyle #3}
\end{array}

}
\begin{document}

%\title[Eventual Ideal properties of semigroups of operators]{Eventual Ideal properties of semigroups of operators. The case of the Riemann-Liouville semigroup.}
%\author{Projet CEDRE}
%\date{}% \footnotesize \today}
%

\title[Eventual Ideal properties of semigroups of operators]{Eventual Ideal properties of the Riemann-Liouville analytic semigroup}
\author{S. GORILIHA*}

\address{I. Alam, Université Libanaise, Faculté des Sciences II, 90656, Fanar-Beyrouth, Liban}
\email{ihabalam@yahoo.fr}

\address{I. Chalendar, Université Gustave Eiffel, LAMA, (UMR 8050), UPEM, UPEC, CNRS, 77454, Marne-la-Vallée, France}
\email{isabelle.chalendar@univ-eiffel.fr}

\address{F. El Chami, Université Libanaise, Faculté des Sciences II, 90656, Fanar-Beyrouth, Liban}
\email{fchami@ul.edu.lb}

\address{E. Fricain, Laboratoire Paul Painlev\'e, Universit\'e de Lille, 59655 Villeneuve d'Ascq C\'edex, France}
\email{emmanuel.fricain@univ-lille.fr}
%
%\author[Hartmann]{Andreas Hartmann}
\address{P. Lef\`evre, Laboratoire de Math\'ematiques de Lens (LML), EA 2462, Universit\'e d'Artois, Rue Jean Souvraz S.P. 18, 62307 Lens, France}
\email{pascal.lefevre@univ-artois.fr}

\keywords{Volterra operator, Riemann--Liouville semigroup, fractional integration, nuclear operators, Schatten class}
\thanks{* I. Alam, I. Chalendar,  F. El Chami, E. Fricain and P. Lef\`evre.\\
This work was supported by Labex CEMPI (ANR-11-LABX-0007-01), the project FRONT (ANR-17-CE40 - 0021) and the project CEDRE Th\'eorale 2020-2022}

\subjclass[2010]{47B10, 47B38, 47D03 47G10}

\maketitle

\begin{abstract}
In this paper, we revisit the Riemann--Liouville analytic semigroup. In particular, we completely characterize the membership to the Schatten class $\mathcal S^r$ on $L^2(0,1)$, as well as the membership to the class of nuclear operators on $L^p(0,1)$, where $1\less p<\infty$, and the membership to the ideal of absolutely $r$-summing operators for any $r\gtr 1$. 
\end{abstract}

\section{Introduction} 
Let $X$ be a Banach space and $T\in {\mathcal B}(X)$, which means that  the map $T:X\to X$ is linear and continuous. Then $T$ can always be embedded in the \emph{discrete semigroup} of operators $(T^n)_{n\gtr 0}$. When $T$ is a composition operator on some classical Banach spaces $X$ of holomorphic functions on the open unit disc $\mathbb D$, the asymptotic behaviour of $T^n$ as $n~\to\infty$, for the various possible topologies offered by ${\mathcal B}(X)$, is useful for describing all the composition operators which are isometric or similar to isometries, see \cite{ACKS18,ACKS20}.  

The problem of the embeddability of $T\in{\mathcal B}(X)$ in a strongly continuous semigroup is a natural question and a difficult problem in general. Recall that we say that $T\in {\mathcal B}(X)$ is embeddable in  a strongly continuous semigroup (in short a $C_0$-semigroup) if there exists $(T_t)_{t\gtr 0}\subset {\mathcal B}(X)$ such that $T_0=\mathop{Id}$, $T_1=T$, $T_{s+t}=T_tT_s$ for all $s,t\gtr 0$ and \[\lim_{t\to 0}T_t x=x \mbox{ for all }x\in X.\]   

A sufficient spectral condition which guarantees the embeddability of $T$ is the following: $\sigma(T)\subset \Omega$, where $\Omega$ is a simply connected open set which does not contain $0$, so that it is possible to define an analytic logarithm of $T$ using the Dunford--Riesz functional calculus, and then $T_t=T^t$, $t>0$ defines a $C_0$-semigroup.   

A necessary condition for the embeddability is related to the kernel and the codimension of the closure of the range of $T$ \cite{tania}, namely 
\[\dim\ker (T)\in \{0,\infty\} \mbox{ and }\rm{codim}(\overline{TX})\in \{ 0,\infty\}.\]  
From this, one can deduce that when $X$ is a Hilbert space and $T$ is normal,  a necessary and sufficient condition for the embeddability is that 
$\dim\ker (T)\in \{0,\infty\}$ (see \cite[Chapter 5]{tania}). Moreover, when $X$ is a Hilbert space and $T$ is isometric, a necessary and sufficient condition is that $T$ is unitary or ${\rm{codim}} (TX)=\infty$. 

We focus on  the study of the well-known Volterra operator $V$ on  
spaces $X$ of functions over $(0,1)$. 
More specifically, we shall work on $X=L^p(0,1)$, $1\less p<\infty$, and $X=C_0([0,1])=\{f\in C([0,1])\,|\; f(0)=0\}$, rather than on the space of continuous functions $C([0,1])$: indeed we could work on this latter space with the Volterra operator and the associated semigroup (see below). 
Nevertheless since their range is included in $C_0([0,1])$, it makes more sense to work on $C_0([0,1])$. 
Actually, we would loose the strong continuity of the semigroup in which $V$ can be embedded if we considered $C([0,1])$. 

Recall that the Volterra operator $V$ is defined, for every $f\in X$ and every $x\in [0,1]$, by 
\[
V(f)(x)=\int_0^xf(u)\;du.
\]
Since the spectrum of $V$ is $\{0\}$, the embeddability of $V$ is not clear in view of the sufficient condition for embeddability. 
Fortunately the necessary conditions are satisfied and indeed  the computation  of the powers of $V$ quickly suggests that it embeds into the following  semigroup:
 \begin{equation}\label{eq:vol}
V_t(f)(x)=\dfrac{1}{\Gamma(t)}\int_0^x f(u)(x-u)^{t-1}\;du,  
\end{equation}
where $t>0,\hbox{  }x\in[0,1]$ and $\Gamma$ stands for the classical Euler function.

 The semigroup $(V_t)_{t\gtr 0}$ is called the {\emph{Riemann-Liouville semigroup}}. For $a\in\R$, let $\C_a$ be the right half plane $\{z\in\C\,|\; \Re(z)>a\}$.  In fact, \eqref{eq:vol} makes sense if we replace $t$ by $\xi\in\mathbb C_0$, that is: 
\begin{equation}\label{RL1}\tag{RL1}
	V_\xi (f)(x)=\dfrac{1}{\Gamma(\xi)}\int_0^x f(u)(x-u)^{\xi-1}\;du.
\end{equation}
It can also be expressed as:
\begin{equation}\label{RL2}\tag{RL2}
	\dis V_\xi(f)(x)=\dfrac{x^\xi}{\Gamma(\xi)}\int_0^1f(x\theta)(1-\theta)^{\xi-1}\;d\theta.
\end{equation}
\bigskip

The history of this integral (and semigroup) is very rich and goes back to the beginning of the nineteenth century \cite{Liouville,Riemann}. 
It is linked  to the theory of fractional calculus and ordinary differential equations. Since its introduction, it has been generalized in many ways. See for instance \cite{Diethelm} and the reference therein.  
The aim of this paper is  to revisit this semigroup and exhibit several new properties in connection with classical ideals of operators. In particular, we will see that the membership of $V_\xi$ to most of the natural ideals (nuclear operators on $L^p(0,1)$, Schatten class $\mathcal S^r$ on $L^2(0,1)$ or absolutely $r$-summing operators on $L^p(0,1)$) will depend on $\Re(\xi)$.  More precisely, we will show that for each of these ideals $I$, the semigroup $(V_\xi)_{\xi\in\C_0}$ on $L^p(0,1)$ eventually belongs to $I$, meaning that there is a $\tau_0>0$ (depending on the ideal $I$ and $p$) such that $V_\xi$ belongs to $I$ if and only if $\Re(\xi)>\tau_0$. 
\bigskip

The paper is organized as follows. In Section~\ref{sec:2}, we introduce and recall some useful properties of some ideals of operators we are interested in. We also revisit a three lines theorem in the context of Schatten classes. In Section~\ref{section-semi-group}, we collect some classical properties of the Riemann-Liouville semigroups concerning boundedness and prove again that $(V_\xi)_{\xi\in\C_0}$ is an analytic strongly continuous semigroup. In Section~\ref{sec:unicellularity}, we analyze the property of unicellularity of $V_\xi$, for $\xi\in\C_0$. 
In the last section, which contains the main results, we completely characterize the membership to the class of nuclear operators on $L^p(0,1)$, $p\gtr 1$ as well as the membership to the Schatten class $\mathcal S^r$ on $L^2(0,1)$. Moreover, we characterize those $V_\xi$ which are absolutely $r$-summing on $L^p(0,1)$ for every $r,p\gtr1$. Finally, we characterize those $V_\xi$ which are nuclear, $r$-integral and absolutely $r$-summing on $C([0,1])$. 

Note that Sections 3 and 4 present more or less known results, but in a slightly more general context, and are more a survey with self-contained proofs (which are sometimes maybe a bit different than the classical ones). However Section 5 contains completely new results.

\section{preliminaries}\label{sec:2}
In the following, $X,Y$ are Banach spaces, $\mathcal B(X,Y)$ denotes the space of linear and continuous operators from $X$ to $Y$, and $\mathcal K(X,Y)$ denotes the ideal of compact operators from $X$ to $Y$. For a Banach space $X$, we denote by $X'$ its dual (equipped with its usual norm), and we denote by $B_X$ the closed unit ball of $X$. Finally, for $1\less p\less \infty$, $p'$ denotes its conjugate exponent, that is $\frac{1}{p}+\frac{1}{p'}=1$. 

We recall some definitions and basic facts about well-studied ideals of operators related to compact operators. We refer to the textbooks \cite{DJT,DS,GGK,RS} for further details and references.
 
\subsection{Ideals of linear operators: definitions and main properties}
 \subsubsection{Nuclear operator}
Let $1\less p<\infty$, and let $T\in\mathcal B(X,Y)$. The operator $T$ is called \emph{$p$-nuclear}, and we write $T\in\mathcal N_p(X,Y)$, if there are sequences $(\lambda_n)_n\in\ell^p(\mathbb N)$, $(\varphi_n)_n$ in the unit ball of $X'$ and $(y_n)_n$ in the unit ball of $\ell^{p'}_{weak}(Y)$ such that 
\begin{equation}\label{eq:defn-nuclear}
T=\sum_{n=1}^{\infty}\lambda_n \varphi_n\otimes y_n,
\end{equation}
where $\varphi_n\otimes y_n$ is the rank-one operator in $\mathcal B(X,Y)$ defined, for $x\in X$, by $(\varphi_n\otimes y_n)(x)=\varphi_n(x)y_n$, and 
\[
\ell^{p'}_{weak}(Y)=\{(y_n)_n\in Y^{\mathbb N}:\,\forall \psi\in Y', (\psi(y_n))_n\in\ell^{p'}(\mathbb N)\},
\]
equipped with the norm
\[
\|(y_n)_n\|_{\ell^{p'}_{weak}(Y)}=\sup_{\psi\in B_{Y'}}\|(\psi(y_n))_n\|_{\ell^{p'}}.
\]
When $T$ is $p$-nuclear, we write $\nu_p(T)=\inf\|(\lambda_n)_n\|_{\ell^p}$ where the infimum is taken over all factorizations  \eqref{eq:defn-nuclear}, and this defines its $p$-nuclear. 
Endowed with this norm, the space $(\mathcal N_p(X,Y),\nu_p(\cdot))$ is a Banach space. It is also an ideal, in the sense that if $T\in\mathcal N_p(X,Y)$, $U\in\mathcal B(X_1,X)$ and $V\in\mathcal B(Y,Y_1)$, then $VTU\in\mathcal N_p(X_1,Y_1)$ and $\nu_p(VTU)\less \|V\| \nu_p(T)\|U\|$. Finally, for $1\less p<q<\infty$, we have $\mathcal N_p(X,Y)\subset \mathcal N_q(X,Y)\subset \mathcal K(X,Y)$ and for every $T\in \mathcal N_p(X,Y)$, $\nu_q(T)\less \nu_p(T)$ (see \cite[page 113]{DJT}). 
The particular class of $1$-nuclear operators is simply called the class of \emph{nuclear operators}. Note that if $p=1$, then $p'=\infty$ and 
\[
\|(y_n)_n\|_{\ell^{p'}_{weak}(Y)}=\sup_{\psi\in B_{Y'}}\sup_{n\gtr 1}|\psi(y_n)|=\sup_{n\gtr 1}\|y_n\|_Y.
\]
Therefore an operator $T\in \mathcal B(X,Y)$ is nuclear if and only if there exist $(\varphi_n)_n\subset X'$ and $(y_n)_n\subset Y$ such that 
\[
T=\sum_{n=1}^{\infty}\varphi_n\otimes y_n,
\]
and 
\[
\sum_{n=1}^\infty \|\varphi_n\|_{X'}\|y_n\|_Y<\infty. 
\]
 \\

\subsubsection{Absolutely $p$-summing operators}\label{subsubsection-summing-operator}

Let $1\less p<\infty$ and let $T\in\mathcal B(X,Y)$. The operator $T$ is called \emph{absolutely $p$-summing}, and we write $T\in \Pi_p (X,Y)$, if there exists  $\rho>0$ such that for every $n\in\mathbb N$ and every $x_1,\cdots, x_n\in X$ we have
\begin{equation}\label{eq:psum}  
  \left( \sum_{i=1}^n\|T x_i\|_Y^p\right)^{1/p}\less \rho \sup_{\varphi\in B_{X'}}  \left( \sum_{i=1}^n|\varphi( x_i)|^p\right)^{1/p}.
 \end{equation}
For $T\in \Pi_p (X,Y)$ we define its absolutely $p$-summing norm $\pi_p(T)$ by 
\[  \pi_p(T)=\inf\{  \rho>0:\eqref{eq:psum}\mbox{ holds}\}\]
The space $(\Pi_p(X,Y),\pi_p(\cdot))$ is a Banach space. It is also an ideal, in the sense that if $T\in\Pi_p(X,Y)$, $U\in\mathcal B(X_1,X)$ and $V\in\mathcal B(Y,Y_1)$, then $VTU\in\Pi_p(X_1,Y_1)$ and $\pi_p(VTU)\less \|V\| \pi_p(T)\|U\|$. In addition, we have
\begin{equation}\label{Diestel-eq2}
1\less p<q<\infty\implies \Pi_p(X,Y)\subset  \Pi_q(X,Y),
\end{equation}
and for every $T\in\Pi_p(X,Y)$, $\pi_q(T)\less \pi_p(T)$.
\\

The most simple example of a $p$-absolutely summing operator is given as follows. 
Let $\mu$ be a probability Borel measure on a compact space $K$ and let $1\less p\less \infty$. Then the formal identity 
\begin{equation}\label{Diestel-eq3}
i_p: C(K)\to L^p(\mu) \mbox{ is absolutely $p$-summing}, 
\end{equation}
and $\pi_p(i_p)=1$. See \cite[page 201]{Woj} or \cite[page 40]{DJT}. It is also known that every absolutely $p$-summing operator, $1\less p<\infty$, is weakly compact and completely continuous. Moreover, if $T\in \Pi_p(X,Y)$ and $S\in\Pi_q(Y,Z)$, then $ST$ is compact \cite[Page 50]{DJT}. 

A famous result of Pietsch states that the previous example of absolutely $p$-summing operators is in some sense canonical. We recall here the following version which we shall use when studying the membership to the absolutely $p$-summing operators on $C([0,1])$. See \cite[Theorem 2.12 and Corollary 2.15]{DJT}.
\begin{theorem}[Pietsch]\label{Pietsch-theorem}
Let $K$ be a compact space and $T:C(K)\to Y$ be a bounded operator. Then $T$ is absolutely $p$-summing if and only if there exists a regular Borel probability measure $\nu$ on $K$ and a constant $C>0$ such that, for every $f\in C(K)$, we have  
\begin{equation}\label{eq:pietsch}
\|Tf\|_Y\leq C \left(\int_K |f(x)|^p\,d\nu(x)\right)^{\frac{1}{p}}.
\end{equation}
Moreover, in this case, the infimum of $C$ satisfying \eqref{eq:pietsch} is $\pi_p(T)$. 
\end{theorem}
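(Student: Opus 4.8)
The plan is to prove the two implications separately and then track the constants to recover $\inf C=\pi_p(T)$. First I would dispatch the easy implication: assume the domination \eqref{eq:pietsch} holds for some probability measure $\nu$ and constant $C$. Given $f_1,\dots,f_n\in C(K)$, summing the $p$-th powers gives
\[
\sum_{i=1}^n\|Tf_i\|_Y^p\less C^p\int_K\sum_{i=1}^n|f_i(x)|^p\,d\nu(x)\less C^p\sup_{x\in K}\sum_{i=1}^n|f_i(x)|^p.
\]
Since each evaluation functional $\delta_x$ lies in $B_{C(K)'}$, the right-hand supremum is at most $\sup_{\varphi\in B_{C(K)'}}\sum_i|\varphi(f_i)|^p$, so $T\in\Pi_p(C(K),Y)$ with $\pi_p(T)\less C$; in particular $\pi_p(T)$ is below the infimum of admissible constants.

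The main work is the converse. Write $\rho=\pi_p(T)$ and let $\mathcal P(K)\subset C(K)'$ be the set of regular Borel probability measures, a convex set that is weak-$*$ compact by Banach--Alaoglu together with the Riesz representation theorem. For each finite family $F=(f_1,\dots,f_n)$ in $C(K)$ I would introduce the function
\[
g_F:\mathcal P(K)\to\R,\qquad g_F(\nu)=\sum_{i=1}^n\|Tf_i\|_Y^p-\rho^p\int_K\sum_{i=1}^n|f_i(x)|^p\,d\nu(x),
\]
which is affine and weak-$*$ continuous in $\nu$. The family $\mathcal F=\{g_F\}$ is convex: since the norm and the integrand are $p$-homogeneous, replacing $f_i$ by $\lambda^{1/p}f_i$ scales $g_F$ by $\lambda$, so a convex combination $tg_F+(1-t)g_{F'}$ equals $g_{F''}$ for the concatenated, rescaled family. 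The point connecting $\mathcal F$ to the summing hypothesis is the identity
\[
\sup_{\varphi\in B_{C(K)'}}\sum_{i=1}^n|\varphi(f_i)|^p=\sup_{x\in K}\sum_{i=1}^n|f_i(x)|^p=\sup_{\nu\in\mathcal P(K)}\int_K\sum_{i=1}^n|f_i(x)|^p\,d\nu(x),
\]
where the first equality holds because $\varphi\mapsto\sum_i|\varphi(f_i)|^p$ is convex and weak-$*$ continuous, hence maximized at an extreme point of $B_{C(K)'}$, and those extreme points are the functionals $\lambda\delta_x$ with $|\lambda|=1$; the second holds because a probability measure cannot exceed the pointwise supremum, itself attained at a Dirac mass since $K$ is compact. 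Combining this with the defining inequality \eqref{eq:psum} for $\rho=\pi_p(T)$ shows that for every $F$ one has $\inf_{\nu\in\mathcal P(K)}g_F(\nu)\less 0$, so each $g_F$ takes a nonpositive value somewhere on $\mathcal P(K)$.

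At this stage I would invoke Ky Fan's lemma (see \cite[Lemma 2.11]{DJT}): for a convex family of convex lower semicontinuous functions on a compact convex set, each attaining a value $\less 0$, there is a single point at which all of them are $\less 0$. Applied to the convex family $\mathcal F$ of affine continuous functions on the weak-$*$ compact convex set $\mathcal P(K)$, this yields a measure $\nu_0\in\mathcal P(K)$ with $g_F(\nu_0)\less 0$ for every finite family $F$. Specializing to singletons $F=(f)$ gives $\|Tf\|_Y^p\less\rho^p\int_K|f|^p\,d\nu_0$, that is \eqref{eq:pietsch} with $C=\rho=\pi_p(T)$. Combined with the easy implication, this proves both the equivalence and that the infimum of admissible constants $C$ equals $\pi_p(T)$.

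The crux of the argument, and the step I expect to be the only genuine obstacle, is passing from the family of statements ``each $g_F$ is nonpositive somewhere'' to the existence of a single $\nu_0$ working simultaneously for all $F$. This is exactly a minimax phenomenon: the convexity of $\mathcal F$ coming from $p$-homogeneity, together with the weak-$*$ compactness of $\mathcal P(K)$, is precisely what licenses the Ky Fan interchange, while the remaining verifications (affineness of $g_F$ and the extreme-point identity) are routine.
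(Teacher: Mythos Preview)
Your argument is correct and is precisely the standard Pietsch domination proof via Ky Fan's lemma. Note, however, that the paper does not supply its own proof of this theorem: it merely quotes the result and refers to \cite[Theorem 2.12 and Corollary 2.15]{DJT}. The proof you wrote is essentially the one given there (the convex family of affine functionals on the weak-$*$ compact set $\mathcal P(K)$, the extreme-point identification $\sup_{\varphi\in B_{C(K)'}}\sum_i|\varphi(f_i)|^p=\sup_{x\in K}\sum_i|f_i(x)|^p$, and the appeal to \cite[Lemma 2.11]{DJT}), so there is no methodological difference to discuss.
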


Finally, let us mention the following well-known result of Grothendieck  \cite[Theorem 3.4]{DJT}.
\begin{theorem}[Grothendieck]\label{Diestel-theorem4}
Let $\mu$ and $\nu$ be any measures and let $T:L^1(\mu)\to L^2(\mu)$ be any bounded operator. Then $T$ is absolutely $1$-summing and  
\[
\pi_1(T)\less K_G \|T\|,
\] 
where $K_G$ is a universal constant called the Grothendieck constant.
\end{theorem}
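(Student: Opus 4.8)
The statement is Grothendieck's fundamental theorem, and it is genuinely deep; the engine behind it is \emph{Grothendieck's inequality}, which I would isolate first. In its discrete real form it asserts the existence of a universal constant $K_G$ such that, for every finite matrix $(a_{ij})$ and all families of unit vectors $(x_i),(y_j)$ in a real Hilbert space $H$,
\[
\Big|\sum_{i,j}a_{ij}\langle x_i,y_j\rangle\Big|\less K_G\sup_{|s_i|\less 1,\,|t_j|\less 1}\Big|\sum_{i,j}a_{ij}s_it_j\Big|.
\]
The whole argument then splits into a soft functional-analytic reduction of the $1$-summing estimate to (a continuous version of) this inequality, and the genuinely hard probabilistic proof of the inequality itself.

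For the reduction, I would use density to reduce to finitely many simple functions and represent $T$ by an $H$-valued kernel: since $T^*:H\to L^\infty(\mu)$, there is a weak-$*$ measurable $\phi$ with $\|\phi(x)\|_H\less\|T\|$ such that $Tf=\int f(x)\phi(x)\,d\mu(x)$. Given $f_1,\dots,f_n\in L^1(\mu)$, pick unit vectors $h_i\in H$ norming $Tf_i$, so that
\[
\sum_{i=1}^n\|Tf_i\|_H=\sum_{i=1}^n\int f_i(x)\langle\phi(x),h_i\rangle\,d\mu(x).
\]
The bilinear form $B(s,g)=\sum_i s_i\int f_ig\,d\mu$ on $\R^n\times L^\infty(\mu)$ has norm exactly $\sup_\epsilon\|\sum_i\epsilon_if_i\|_{L^1}$, which is precisely the weak-$\ell^1$ quantity $\sup_{\varphi\in B_{(L^1)'}}\sum_i|\varphi(f_i)|$ on the right of the $1$-summing inequality. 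Writing the displayed sum as $\|T\|\sum_i\int f_i(x)\langle\phi(x)/\|T\|,h_i\rangle\,d\mu$ and applying the continuous form of Grothendieck's inequality to $B$ (with the unit vectors $\phi(x)/\|T\|$ and $h_i$) yields $\sum_i\|Tf_i\|_H\less K_G\|T\|\sup_{\varphi}\sum_i|\varphi(f_i)|$, i.e. $\pi_1(T)\less K_G\|T\|$.

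The hard part is the inequality itself, for which I would follow Krivine's argument. The starting point is Grothendieck's identity: for a standard Gaussian vector $g$ and unit vectors $u,v$,
\[
\E\big[\sign\langle g,u\rangle\,\sign\langle g,v\rangle\big]=\tfrac{2}{\pi}\arcsin\langle u,v\rangle.
\]
The trick is to absorb the $\arcsin$: setting $c=\ln(1+\sqrt2)$ (so that $\sinh c=1$), the alternating Taylor series of $\sin$ lets one construct, in a suitable direct sum of tensor powers of $H$, new unit vectors $u_i,v_j$ with $\langle u_i,v_j\rangle=\sin(c\langle x_i,y_j\rangle)$. Since $|c\langle x_i,y_j\rangle|\less c<\pi/2$, the identity gives $\E[\sign\langle g,u_i\rangle\sign\langle g,v_j\rangle]=\frac{2c}{\pi}\langle x_i,y_j\rangle$, whence
\[
\sum_{i,j}a_{ij}\langle x_i,y_j\rangle=\frac{\pi}{2c}\,\E\Big[\sum_{i,j}a_{ij}\,\sign\langle g,u_i\rangle\,\sign\langle g,v_j\rangle\Big].
\]
For each fixed $g$ the signs are admissible choices of $s_i,t_j\in\{\pm1\}$, so the inner sum is bounded by the matrix norm, giving the inequality with $K_G\less\pi/(2\ln(1+\sqrt2))$.

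I expect the decisive obstacle to be exactly this nonlinear tensor substitution realizing $\sin(c\langle x_i,y_j\rangle)$ as an honest inner product of unit vectors, together with Grothendieck's identity; by contrast, the passage from the inequality to the $1$-summing bound, and the discretization needed to move between the finite and continuous forms of the inequality, are routine.
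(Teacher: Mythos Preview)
The paper does not prove this theorem at all: it is quoted as a classical result and referred to \cite[Theorem~3.4]{DJT}, so there is no ``paper's own proof'' to compare against. Your sketch is therefore going well beyond what the paper does.

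That said, your outline is essentially the standard route and is correct in its broad strokes. The reduction you describe---representing $T$ via a bounded $H$-valued kernel $\phi$ coming from $T^*:H\to L^\infty(\mu)$, norming each $Tf_i$ by a unit vector $h_i$, and then recognising the resulting expression as a bilinear form on $\ell^\infty_n\times L^\infty(\mu)$ whose norm is exactly the weak-$\ell^1$ quantity $\sup_{\varphi\in B_{(L^1)'}}\sum_i|\varphi(f_i)|$---is the classical argument (see e.g.\ Lindenstrauss--Pe{\l}czy\'nski or \cite{DJT}). The only point I would flag is that passing from the finite, discrete Grothendieck inequality to the ``continuous'' version needed here (one side indexed by a measure space) requires an approximation by simple functions or an $L^\infty$/$C(K)$ formulation of the inequality; this is routine, but you should say explicitly which version you are invoking. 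Your account of Krivine's argument for the inequality itself (Grothendieck's $\arcsin$ identity, the tensor-power construction realising $\sin(c\langle x_i,y_j\rangle)$ as a genuine inner product with $c=\ln(1+\sqrt2)$, and the resulting bound $K_G\le\pi/(2\ln(1+\sqrt2))$) is accurate.
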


%Let us briefly recall the standard notion of cotype for a Banach space and its relation with absolutely summing operators. Let $r_n:[0,1]\longrightarrow \mathbb R$, $n\in\mathbb N$, be the Rademacher functions defined by 
%\[
%r_n(t)=\sign(\sin(2^n\pi t)),\qquad t\in [0,1].
%\]
%Let $p\gtr 1$. A Banach space $X$ has \emph{type $p$} if there {\color{blue}exists} a constant $\theta\gtr 0$ such that, however we choose finitely many vectors $x_1,\dots,x_n$ from $X$, we have
%\[\left(\bigintsss_0^1\left\|\sum_{k=1}^n r_k(t)x_k\right\|^2_X\,dt\right)^{\frac{1}{2}}\less \theta \left(\sum_{k=1}^n \|x_k\|_X^p\right)^{\frac{1}{p}}.\]

%A Banach space $X$ has \emph{cotype $q$}, $1\less q<\infty$, if there exists a constant $\kappa\gtr 0$ such that, no matter how we select finitely many vectors $x_1,\dots,x_n$ from $X$, we have
%\[
%\left(\sum_{k=1}^n \|x_k\|_X^q\right)^{\frac{1}{q}} \less \kappa \left(\bigintsss_0^1\left\|\sum_{k=1}^n r_k(t)x_k\right\|^2_X\,dt\right)^{\frac{1}{2}}.
%\]
It is well known that if $(\Omega,\Sigma,\mu)$ is any measure space and $1\less p<\infty$, then $L^p(\mu)$ has cotype $\max(p,2)$ (see \cite{DJT,Woj} for more details on the cotype). Moreover, every Hilbert space have cotype $2$. See \cite[Corollary 11.7 and Corollary 11.8]{DJT}. We shall need the following links with the ideal of absolutely summing operators. If $Y$ has cotype $2$, then for any Banach space $X$ and all $2<r<\infty$, we have $\Pi_r(X,Y)=\Pi_2(X,Y)$. Furthermore, if $X$ and $Y$ both have cotype $2$, then  
\begin{equation}\label{Diestel-eq5}
\Pi_r(X,Y)=\Pi_1(X,Y) \quad \mbox{for all } 1\less r<\infty.
\end{equation} 
See \cite[Theorem 11.13 and Corollary 11.16]{DJT}. In particular, if $H$ and $K$ are Hilbert spaces, then
\begin{equation}\label{Diestel-eq6}
\Pi_r(H,K)=\Pi_1(H,K) \quad \mbox{for all } 1\less r<\infty.
\end{equation}  
%Here is one of the fundamental feature about absolutely $p$-summing operators.
%\begin{theorem}[The Grothendieck-Pietsch Domination Theorem]
%	Let $1\less p<\infty$, $X,Y$ be some Banach spaces  and $T\in \Pi_p(X,Y)$. Then there exists a regular Borel probability measure $\mu$ defined on $B_{X'}$ the unit ball of the dual space  $X'$ endowed with the weak* topology for which 
%	\[ \|Tx\|\less \Pi_p(T) \left( \int_{B_{X'}} |\varphi(x)|^pd\mu (\varphi)\right)^{1/p}.\]
%\end{theorem} 
\subsubsection{$p$-integral operators}
 Let $1\less p\less \infty$ and let $T\in\mathcal B(X,Y)$. The operator $T$ is called a \emph{$p$-integral operator}, and we write $T\in {\mathcal I}_p(X,Y)$, if there exist a probability measure $\mu$ and two  bounded linear operators $U:L^p(\mu)\to Y''$ and $V:X\to L^\infty(\mu)$ ($Y''$ is the bidual space of $Y$) giving rise to the commutative diagram

 \def\commutatif{\ar@{}[rrd]|{\LARGE{\circlearrowleft}}}
 \[ 
\xymatrix{
 X \ar[r]^T \ar[d]_V \commutatif & Y\ar[r]^{k_Y} &Y''\\
 L^\infty(\mu)\ar[rr]_{i_\mu} &  & L^p(\mu)\ar[u]_U
}
\]
where $i_\mu$ is the formal identity and $k_Y$ is the canonical isometric embedding.  With each $T\in {\mathcal I}_p(X,Y)$ we associate its $p$-integral norm 
  \[ \iota_p(T)=\inf \|U\| \|V\|,\]
  where the infimum is taken over all measures $\mu$ and $U,V$ as above. It is known that $({\mathcal I}_p(X,Y),\iota_p(\cdot))$ is a Banach space. It  is also an ideal, in the sense that if $T\in\mathcal I_p(X,Y)$, $U\in\mathcal B(X_1,X)$ and $V\in\mathcal B(Y,Y_1)$, then $VTU\in\mathcal I_p(X_1,Y_1)$ and $\iota_p(VTU)\less \|V\| \iota_p(T)\|U\|$. 
Finally, if $1\less p<q<\infty$, then we have  $\mathcal I_p(X,Y)\subset  \mathcal I_q(X,Y)$, and for every $T\in\mathcal I_p(X,Y)$, $\iota_q(T)\less \iota_p(T)$.
\\
The formal identity 
\begin{equation}\label{Diestel-eq345EE34}
i_p: C(K)\to L^p(\mu) \mbox{ is absolutely $p$-integral},
\end{equation}
where $\mu$ is a finite Borel measure on a compact space $K$ and $1\less p\less \infty$. 

\subsubsection{Schatten classes}
 If $H,K$ are Hilbert spaces and $T\in\mathcal K(H,K)$, we denote by $(s_n(T))_n$ the decreasing sequence of the singular values of $T$ (i.e. the eigenvalues of $\sqrt{T^*T}$ where $T^*$ is the adjoint operator of $T$). For $1\less p\less \infty$, we say that $T$ belongs to the \emph{Schatten class operator} ${\mathcal S}^p(H,K)$ if $(s_n(T))_n\in \ell^p(\mathbb N)$. Moreover, if we set 
\[
\|T\|_{\mathcal S^p}=\|(s_n(T))_n\|_{\ell^p},
\]
then $({\mathcal S}^p(H,K),\|\cdot\|_{{\mathcal S}^p})$ is a Banach space and it is an ideal, in the sense that if $T\in {\mathcal S}^p(H,K)$, $U\in \mathcal B(H_1,H)$, $V\in\mathcal B(K,K_1)$, then $VTU\in\mathcal S^p(H_1,K_1)$, and we have 
\begin{equation}\label{eq:Schatten-ideal}
\|VTU\|_{\mathcal S^p}\less \|V\| \|T\|_{\mathcal S^p}\|U\|.
\end{equation}
Note that $\mathcal S^{\infty}(H,K)=\mathcal K(H,K)$ and, for $T\in\mathcal S^{\infty}(H,K)$, we have $\|T\|_{\mathcal S^\infty}=\|T\|$. Moreover, ${\mathcal S}^2$ is called the \emph{Hilbert--Schmidt class operator}, whereas ${\mathcal S}^1$ is called the \emph{trace class operator}. For $1\less p\less q\less \infty$, we have  the following inclusions
\[  
{\mathcal S}^1(H,K)\subset {\mathcal S}^p(H,K)\subset {\mathcal S^q}(H,K)\subset {\mathcal K}(H,K),
\]
and for every $T\in {\mathcal S}^p(H,K)$, we have $\|T\|_{\mathcal S^q}\less \|T\|_{\mathcal S^p}$. Finally, we recall that if $T\in\mathcal B(H,K)$ and $1\less p<\infty$, then $T$ belongs to $\mathcal S^p(H,K)$ if and only if for every orthonormal sequences $\big(h_n\big)_{n\gtr1}$ of $H$ and $\big(k_n\big)_{n\gtr1}$ of $K$, we have
\begin{equation}\label{cnschatten1}
\sum_{n\gtr1}\big|\langle T(h_n),k_n\rangle\big|^p<\infty\;.
\end{equation}
Moreover
\begin{equation}\label{cnschatten2}
\|T\|_{\mathcal{S}^{p}}=\sup\Big(\sum_{n\gtr1}\big|\langle T(h_n),k_n\rangle\big|^p\Big)^{\frac{1}{p}}
\end{equation}
where the supremum runs over all the orthonormal sequences $\big(h_n\big)_{n\gtr1}$ in $H$ and $\big(k_n\big)_{n\gtr1}$ in $K$. See \cite[Theorem~4.6.b, p.~81]{DJT} and its proof.
\\

We shall need a general result making use of the analycity in our framework.

\begin{theorem}\label{interpolschatten}
Let $1\less p_1<p_0\less \infty$, $\alpha_0<\alpha_1$, and consider the open set $\dis\Omega=\{z\in\C:\alpha_0<\Re(z)<\alpha_1\}$. Let $(T_z)_{z\in\overline{\Omega}}$ be a family in $\mathcal B(H,K)$ such that for every $(x,y)\in H\times K$, the map $z\mapsto\langle T_z(x),y\rangle$ is bounded and continuous on $\overline{\Omega}$, and holomorphic on $\Omega$. Assume that 
\begin{enumerate}[(i)]
\item $T_z\in\mathcal{S}^{p_1}(H,K)$ for $\Re(z)=\alpha_1$ with $\dis\sup_{\Re(z)=\alpha_1}\big\|T_z\big\|_{\mathcal{S}^{p_1}}<\infty$;
\item $T_z\in\mathcal{S}^{p_0}(H,K)$ for $\Re(z)=\alpha_0$ with $\dis\sup_{\Re(z)=\alpha_0}\big\|T_z\big\|_{\mathcal{S}^{p_0}}<\infty$.
\end{enumerate}
Then $T_z\in\mathcal{S}^{p}(H,K)$ when $\Re(z)=\alpha\in(\alpha_0,\alpha_1)$ with $\alpha=\theta\alpha_1+(1-\theta)\alpha_0$ and  $p\in(p_1,p_0)$ defined by  $\dfrac{1}{p}=\dfrac{\theta}{p_1}+\dfrac{1-\theta}{p_0}\cdot$
\end{theorem}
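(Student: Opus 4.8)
The plan is to prove this as a Stein-type complex interpolation result, that is, a three-lines theorem transported to the Schatten scale, using the orthonormal-sequence characterization \eqref{cnschatten1}--\eqref{cnschatten2} as the main tool. Fix $z_0\in\overline{\Omega}$ with $\Re(z_0)=\alpha$. By \eqref{cnschatten1} it is enough to produce a finite bound, uniform in $N$ and in the chosen families, for $\sum_{n=1}^N|\langle T_{z_0}(h_n),k_n\rangle|^p$, where $(h_n)_{n\less N}$ and $(k_n)_{n\less N}$ are orthonormal in $H$ and $K$ respectively. First I would fix such finite families and, by $\ell^p$--$\ell^{p'}$ duality, rewrite $\big(\sum_{n=1}^N|\langle T_{z_0}(h_n),k_n\rangle|^p\big)^{1/p}$ as the supremum, over all scalars $(c_n)_{n\less N}$ with $\sum_n|c_n|^{p'}\less 1$, of the quantity $\big|\sum_{n=1}^N c_n\langle T_{z_0}(h_n),k_n\rangle\big|$.

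The heart of the argument is the construction of an analytic family of coefficients. I would choose the affine function $\beta(z)=az+b$, with $a$ real, singled out by the three requirements $\Re\beta\equiv p'/p_0'$ on $\{\Re(z)=\alpha_0\}$, $\Re\beta\equiv p'/p_1'$ on $\{\Re(z)=\alpha_1\}$, and $\beta(z_0)=1$. A direct computation using $\frac{1}{p'}=\frac{\theta}{p_1'}+\frac{1-\theta}{p_0'}$ (the conjugate of the interpolation relation for $p$) shows that these three conditions are compatible. Setting $c_n^{(z)}=|c_n|^{\beta(z)}\,c_n/|c_n|$ for $c_n\neq 0$, interpreting $|c_n|^{\beta(z)}=e^{\beta(z)\ln|c_n|}$, and $c_n^{(z)}=0$ otherwise, I form
\[
G(z)=\sum_{n=1}^N c_n^{(z)}\,\langle T_z(h_n),k_n\rangle.
\]
This is a finite sum of products of an entire function with a function that, by hypothesis, is bounded and continuous on $\overline{\Omega}$ and holomorphic on $\Omega$; hence $G$ inherits these properties, and $G(z_0)=\sum_{n=1}^N c_n\langle T_{z_0}(h_n),k_n\rangle$ because $\beta(z_0)=1$.

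Next I would estimate $G$ on the two edges. Since $|c_n^{(z)}|=|c_n|^{\Re\beta(z)}$, on $\{\Re(z)=\alpha_0\}$ one gets $\sum_n|c_n^{(z)}|^{p_0'}=\sum_n|c_n|^{p'}\less 1$, so Hölder's inequality with conjugate exponents $p_0,p_0'$ combined with \eqref{cnschatten2} yields
\[
|G(z)|\less\Big(\sum_{n=1}^N|\langle T_z(h_n),k_n\rangle|^{p_0}\Big)^{1/p_0}\less\|T_z\|_{\mathcal{S}^{p_0}}\less M_0,
\]
where $M_0=\sup_{\Re(w)=\alpha_0}\|T_w\|_{\mathcal{S}^{p_0}}<\infty$ by (ii); symmetrically $|G(z)|\less M_1:=\sup_{\Re(w)=\alpha_1}\|T_w\|_{\mathcal{S}^{p_1}}<\infty$ on $\{\Re(z)=\alpha_1\}$ by (i). Applying Hadamard's three-lines theorem to the bounded holomorphic function $G$ (after the affine change of variable sending $\Omega$ onto $\{0<\Re<1\}$, under which $\alpha$ is carried to $\theta$) gives $|G(z_0)|\less M_0^{1-\theta}M_1^{\theta}$. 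Taking the supremum over admissible $(c_n)$, then over $N$ and over all orthonormal families, I conclude through \eqref{cnschatten1}--\eqref{cnschatten2} that $T_{z_0}\in\mathcal{S}^p(H,K)$ with $\|T_{z_0}\|_{\mathcal{S}^p}\less M_0^{1-\theta}M_1^{\theta}$.

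The step I expect to be the main obstacle is the construction of the coefficient family $c_n^{(z)}$: it must simultaneously reduce to the prescribed data $c_n$ at $z_0$ and have the exact boundary exponents $p'/p_0'$ and $p'/p_1'$ that make the Hölder estimates land on $\mathcal{S}^{p_0}$ and $\mathcal{S}^{p_1}$, and one must verify that $G$ is genuinely bounded and holomorphic on the strip so that the three-lines theorem applies; everything downstream is then routine. The only other point requiring care is that the duality and the passage to the supremum interact correctly with the truncation in $N$, which is harmless because all bounds are uniform.
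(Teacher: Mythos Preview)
Your proof is correct and follows essentially the same route as the paper's: both test against finite orthonormal families, dualize the $\ell^p$-sum against coefficients in the unit ball of $\ell^{p'}$, deform those coefficients by an affine-in-$z$ complex exponent so that on the two edges they land in the unit balls of $\ell^{p_0'}$ and $\ell^{p_1'}$, and then apply the three-lines theorem. The only cosmetic difference is that the paper first pulls the strip back to $\{0<\Re z<1\}$ and works with the function $\rho(z)=z/p_1'+(1-z)/p_0'$ (adjusting an imaginary constant so the exponent hits $1$ at the target point), whereas you keep the strip $\Omega$ and encode the same data in your affine $\beta$; these are the same construction up to the change of variable you invoke at the end.
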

After this work was completed, we discovered that this result is not new and actually known for a long time: see \cite[Th.13.1, p.137]{GK69}. Nevertheless our presentation here is slightly different, even though it also relies on the three lines theorem.

This theorem is particularly interesting when $T_z=\psi(z)S_z$ where  $(S_z)_{z\in\C_0}$ is an analytic semigroup of operators and $\psi$ is some (non vanishing) holomorphic function on the right half plane. We shall exploit it in this context.
\begin{proof}
We give the proof for a finite $p_0$ but it is easy to adapt the proof when $p_0=\infty$.
Fix $N\in\N$, $a=(a_n)_{n\in\N}$ in the unit ball of $\ell^{p'}$ and two arbitrary orthonormal sequences $(h_n)_{n\in\N}$ of $H$ and $(k_n)_{n\in\N}$ of $K$. Write $a_n=u_n|a_n|$ with $|u_n|=1$, and, for $j\in\{0;1\}$,  denote by $S_j=\sup_{\Re(z)=\alpha_j}\big\|T_z\big\|_{\mathcal{S}^{p_j}}$. Let us now introduce  the function
$$z\in\overline{\Omega_0}\longmapsto\Phi(z)=\sum_{n=0}^Nu_n|a_n|^{p'\rho(z)+i\beta}\langle T_{z\alpha_1+(1-z)\alpha_0}(h_n),k_n\rangle $$
where $\Omega_0=\{z\in\C:0<\Re(z)<1\}$, $\beta\in\R$ and 
\[
\dis\rho(z)=1-\dfrac{z}{p_1}-\dfrac{1-z}{p_0}=\dfrac{z}{p'_1}+\dfrac{1-z}{p'_0}\cdot
\]
Note that for $z\in{\Omega_0}$, $z\alpha_1+(1-z)\alpha_0\in\Omega$ and if $z\in\partial{\Omega_0}$, then $z\alpha_1+(1-z)\alpha_0\in\partial\Omega$. The function $\Phi$ is holomorphic on ${\Omega_0}$ and moreover it is also clearly bounded on $\overline{\Omega_0}$ (with a bound maybe depending on $N$). 
Actually, on the boundary of ${\Omega_0}$, we have some estimates not depending on $N$.
Indeed, from H\"older's inequality and the fact that $\dis\|a\|_{\ell^{p'}}\less 1$, we have, for $j\in\{0;1\}$ and every $\sigma\in\R$,
\begin{align*}
|\Phi(j+i\sigma)|&\le\sum_{n=0}^N|a_n|^{\frac{p'}{p'_j}}|\langle T_{w_j}(h_n),k_n\rangle|\\
&\le\Big(\sum_{n=0}^N|\langle T_{w_j}(h_n),k_n\rangle|^{p_j}\Big)^{\frac{1}{p_j}}\\
&\le \|T_{w_j}\|_{\mathcal{S}^{p_j}}
\end{align*}
where $w_j=j\alpha_1+(1-j)\alpha_0+i\sigma(\alpha_1-\alpha_0)$. Since $\Re(w_j)=\alpha_j$, we get 
\[
\sup_{\Re(z)=j}|\Phi(z)|\le S_j\;.
\]
From the three lines theorem (see \cite{Had} for instance), we get
$$\forall \theta\in [0,1], \forall \sigma\in\R\,,\qquad|\Phi(\theta+i\sigma)|\le S_0^{1-\theta} S_1^{\theta}\;. $$
For any $\sigma\in\R$, we choose in the sequel $\beta=p'\sigma\Big(\dfrac{1}{p_1}-\dfrac{1}{p_0}\Big)$ so that we check that $p'\rho(\theta+i\sigma)+i\beta=1$ and for $z=\theta+i\sigma$, $z\alpha_1+(1-z)\alpha_0=\alpha+i\sigma(\alpha_1-\alpha_0)$. Thus, for every $\sigma\in\R$, we get 
$$\Bigg|\sum_{n=0}^Na_n\langle T_{\alpha+i\sigma(\alpha_1-\alpha_0)}(h_n),k_n\rangle\Bigg|\le S_0^{1-\theta} S_1^{\theta}\;. $$
Taking the supremum over $a$ in the unit ball of $\ell^{p'}$ and then on $N\in\N$, we get 
$$\Big(\sum_{n=0}^{+\infty}|\langle T_{\alpha+i\sigma(\alpha_1-\alpha_0)}(h_n),k_n\rangle|^{p}\Big)^{\frac{1}{p}}\le S_0^{1-\theta} S_1^{\theta}\;. $$
Finally, taking the supremum over all the orthonormal sequences (recall \eqref{cnschatten2}), we have
$$ \big\|T_{\alpha+i\sigma(\alpha_1-\alpha_0)}\big\|_{\mathcal{S}^p} \le S_0^{1-\theta} S_1^{\theta}$$
and, since $\sigma$ is arbitrary, we conclude that
\begin{equation}\label{interpolestimate}
\sup_{\Re(z)=\alpha}\big\|T_z\big\|_{\mathcal{S}^p} \le S_0^{1-\theta} S_1^{\theta}\;.
\end{equation}
\end{proof} 
 We conclude this subsection on Schatten class operators by a nice factorization result. Let $1\less p,q,r<\infty$ such that $\frac{1}{p}+\frac{1}{q}=\frac{1}{r}$. An operator $T\in\mathcal B(H_1,H_2)$ belongs to $\mathcal S^r(H_1,H_2)$ if and only if there exist a Hilbert space $H$ and two bounded operators $V\in\mathcal S^p(H,H_2)$ and $W\in\mathcal S^q(H_1,H)$ such that $T=VW$. In such a case, we have
 \begin{equation}\label{eq:S2-S2=S1}
\|T\|_{\mathcal S^r}=\inf \|V\|_{\mathcal S^p}\|W\|_{\mathcal S^q},
 \end{equation}
 where the infimum is taken over all such factorizations. Note that the infimum is actually a minimum. See \cite[Theorem 6.3]{DJT}.
 \subsubsection{Order bounded operator}
Let $(\Omega,\Sigma,\mu)$ be any measure space and let $1\less p<\infty$. A non-empty subset $M$ of $L^p(\mu)$ is said to be \emph{order bounded} if there exists a non-negative function $h\in L^p(\mu)$ such that 
\[
|f|\less h\quad \mbox{$\mu$-almost everywhere},
\]
for each $f\in M$. A Banach space operator $T:X\to L^p(\mu)$ is called \emph{order bounded} if $T(B_X)$ is an order bounded subset of $L^p(\mu)$. Recall that $B_X$ denotes the closed unit ball of $X$. 
  
 \subsubsection{Links between all these classes}\label{subsection:links-ideals}
 Let $X,Y$ be Banach spaces and let $1\less p<\infty$. We have
 \begin{equation}\label{Diestel-eq7}
 \mathcal N_p(X,Y)\subset \mathcal I_p(X,Y)\subset \Pi_p(X,Y),
 \end{equation}
 and both inclusions are contractive \cite[pages 97 and 113]{DJT}. Moreover, we have $\mathcal I_2(X,Y)=\Pi_2(X,Y)$ with equality of norms and furthermore, if $Y$ is a subspace of an $L^p$-space, $1\less p\less 2$, then for every Banach space $X$ and for every $2\less q<\infty$, we have
 \[
 \Pi_q(X,Y)=\mathcal I_q(X,Y)=\mathcal I_2(X,Y).
 \]
 See \cite[Page 99]{DJT}.  When $H$ and $K$ are Hilbert spaces, we can say more about the relations between all these ideals. If $1<p<\infty$, then 
 \begin{equation}\label{Diestel-eq10}
 \mathcal I_p(H,K)=\mathcal N_p(H,K)=\Pi_p(H,K)=\Pi_2(H,K)=\mathcal S^2(H,K)
 \end{equation}
 isomorphically, and even isometrically if $p=2$. Moreover, 
 \[
\mathcal I_1(H,K)=\mathcal N_1(H,K)=\mathcal S^1(H,K)
\]
isometrically. See \cite[Corollary 3.16, Theorem 4.10 and Theorem 5.30]{DJT}.\\
 
Regarding the property of order boundedness, we shall need the following known facts that we gather in a theorem. In particular, we see that order boundedness yields an important characterization of Banach space operators whose duals are absolutely $p$-summing. 
 \begin{theorem}\label{Diestel-theorem8}
Let $1\less p<\infty$. 
\begin{enumerate}[(i)]
\item  Let $\mu$ be any measure. Then an order bounded operator $T:X\to L^p(\mu)$ is $p$-integral.
\item A Banach space operator $T:X\to Y$  has an adjoint which is absolutely $p$-summing if and only if, however we choose a measure $\mu$ and a (bounded) operator $U:Y\to L^p(\mu)$, the composition $UT:X\to L^p(\mu)$ is order bounded.
\item Let $\mu$ be any measure. If the operator $T:X\to L^p(\mu)$ has an adjoint which is absolutely $p$-summing, then $T$ must be order bounded.
\end{enumerate}
 \end{theorem}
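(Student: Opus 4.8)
\emph{Structure of the argument.} The three assertions are classical and collected from \cite{DJT}; the plan is to prove \textbf{(i)} directly by an explicit factorization, then to isolate \textbf{(iii)} as the analytic core and deduce it from the Pietsch Domination Theorem, and finally to obtain \textbf{(ii)} by combining \textbf{(iii)} with the ideal property of $\Pi_p$ for the forward implication and with an explicit $\ell^p$-direct sum construction for the converse. Note that \textbf{(iii)} is formally the special case $Y=L^p(\mu)$, $U=\mathrm{Id}$ of the forward part of \textbf{(ii)}, so it has to be established on its own first to avoid circularity. For \textbf{(i)}, assume $T\ne 0$ and let $h\in L^p(\mu)$, $h\ge 0$, dominate $T(B_X)$, i.e. $|Tx|\le\|x\|\,h$ $\mu$-a.e. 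I would set $\nu=\|h\|_p^{-p}\,h^p\,d\mu$, a probability measure, and factor
\[ X\xrightarrow{\;V\;}L^\infty(\nu)\xrightarrow{\;i_\nu\;}L^p(\nu)\xrightarrow{\;U\;}L^p(\mu), \]
where $Vx=(Tx)/h$ on $\{h>0\}$ (and $0$ elsewhere), $i_\nu$ is the formal identity, and $Ug=g\,h$. One checks $\|V\|\le 1$, $\|U\|=\|h\|_p$ and $Ui_\nu V=T$; composing with the canonical embedding $k_Y$ gives exactly the diagram defining a $p$-integral operator, with the bound $\iota_p(T)\le\|h\|_p$.

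For \textbf{(iii)}, take $1<p<\infty$, so that $(L^p(\mu))'=L^{p'}(\mu)$ and $(L^{p'}(\mu))'=L^p(\mu)$, and assume $T'\in\Pi_p(L^{p'}(\mu),X')$ (the case $p=1$, where $(L^1)'=L^\infty$, is treated separately). By the Pietsch Domination Theorem \cite[Theorem~2.12]{DJT} there is a probability measure $\lambda$ on the weak-$*$ compact ball $B=(B_{L^p(\mu)},w^*)$ such that
\[ \|T'g\|_{X'}\le\pi_p(T')\Big(\int_B\Big|\int_\Omega fg\,d\mu\Big|^p\,d\lambda(f)\Big)^{1/p},\qquad g\in L^{p'}(\mu). \]
I would then take as candidate dominating function
\[ h(\omega)=\pi_p(T')\Big(\int_B|f(\omega)|^p\,d\lambda(f)\Big)^{1/p}, \]
and check by Tonelli that $h\in L^p(\mu)$ with $\|h\|_p\le\pi_p(T')$. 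To verify the order bound, fix $x\in B_X$ and a set $A$ of finite measure, and test the inequality against $g=\ind_A\,\overline{\sign(Tx)}$ (so that $\int Tx\,g\,d\mu=\int_A|Tx|\,d\mu$ and $|\int Tx\,g\,d\mu|=|\langle T'g,x\rangle|\le\|T'g\|_{X'}$). Applying Minkowski's integral inequality to pass the $L^p(\lambda)$-norm inside the $\mu$-integral yields $\int_A|Tx|\,d\mu\le\int_A h\,d\mu$ for every such $A$, whence $|Tx|\le h$ $\mu$-a.e.; thus $T(B_X)$ is order bounded.

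For \textbf{(ii)}, the forward implication is immediate: given $U:Y\to L^p(\mu)$ one has $(UT)'=T'U'$, which lies in $\Pi_p$ since $\Pi_p$ is an ideal and $T'\in\Pi_p$, so applying \textbf{(iii)} to $UT:X\to L^p(\mu)$ shows $UT$ is order bounded. For the converse I would argue by contradiction: if $T'\notin\Pi_p(Y',X')$, then for each $m$ there is a finite family $(y^{(m)}_i)_i\subset Y'$ of weak-$p$ norm $1$ with $\sum_i\|T'y^{(m)}_i\|^p>m^p$. Choosing weights with $\sum_m c_m^p<\infty$ but $\sum_m c_m^p m^p=\infty$ (for instance $c_m^p=m^{-2}$), the map $Uy=(c_m\,y^{(m)}_i(y))_{m,i}$ is bounded from $Y$ into a single space $\ell^p(I)=L^p(\text{counting measure})$. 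Order boundedness of $UT$ would produce $h\in\ell^p(I)$ with $c_m\|T'y^{(m)}_i\|\le h_{m,i}$, hence $\sum_m c_m^p\sum_i\|T'y^{(m)}_i\|^p\le\|h\|_{\ell^p}^p<\infty$, contradicting $\sum_m c_m^p m^p=\infty$.

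\emph{Main obstacle.} The delicate point is the extraction of the \emph{pointwise} order bound in \textbf{(iii)}: the Pietsch inequality only controls norms, and turning it into the a.e. domination $|Tx|\le h$ is precisely where Minkowski's integral inequality (and not Hölder, which is lossy here) is essential. A further technical difficulty is the endpoint $p=1$, where $(L^1(\mu))'=L^\infty(\mu)$ is no longer a function space of the expected form and the functionals furnished by Pietsch domination live in the finitely additive dual, so the construction of $h\in L^1(\mu)$ must be handled separately.
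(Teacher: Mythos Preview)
The paper's own proof consists of three citations: (i) is \cite[Proposition~5.18]{DJT}, (ii) is \cite[Theorem~5.20]{DJT}, and (iii) is obtained from (ii) by taking $U=\mathrm{Id}$. Your proposal instead supplies direct arguments and reverses the logical dependence, establishing (iii) first and feeding it into the forward half of (ii). Both orderings are legitimate; your route exposes the analytic content that the paper hides behind the reference, at the cost of having to build the converse of (ii) by hand.

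Your proof of (i) is the standard factorization through $L^\infty(\nu)\hookrightarrow L^p(\nu)$ and is essentially the DJT argument. Your converse for (ii) via the weighted $\ell^p$-embedding is correct; note that the choice $c_m^p=m^{-2}$ gives $\sum_m c_m^p m^p=\sum_m m^{p-2}=\infty$ for every $p\ge 1$, so the contradiction goes through in the full range.

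There is one genuine technical gap in your proof of (iii): the candidate dominant
\[
h(\omega)=\pi_p(T')\Bigl(\int_B|f(\omega)|^p\,d\lambda(f)\Bigr)^{1/p}
\]
is not well defined as written, because elements of $B_{L^p(\mu)}$ are equivalence classes and $(f,\omega)\mapsto f(\omega)$ has no jointly measurable meaning on $B\times\Omega$ in general. The standard repair is to run your Pietsch-plus-Minkowski estimate on \emph{finite} families first---obtaining $\bigl\|\max_{1\le i\le n}|Tx_i|\bigr\|_{L^p(\mu)}\le\pi_p(T')$ for every choice $x_1,\dots,x_n\in B_X$---and then invoke the Levi (Fatou) property of the $L^p$-norm to produce the lattice supremum of $\{|Tx|:x\in B_X\}$ inside $L^p(\mu)$. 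With that adjustment your argument is complete for $1<p<\infty$; you are right that $p=1$ needs separate handling, since the Pietsch measure then lives on $B_{(L^\infty)'}$ rather than on $B_{L^1}$.
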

 \begin{proof}
The assertion $(i)$ is proved in \cite[Proposition 5.18]{DJT}. The assertion $(ii)$ is proved in  \cite[Theorem 5.20]{DJT}. The assertion $(iii)$ follows immediately from $(ii)$. 
\end{proof}

 We shall also need the following facts on composition between these different ideals, that we also gather in the following result. 
 \begin{theorem}\label{Diestel-theorem9}
  Let $X,Y,Z$ be Banach spaces, let $U\in \mathcal B(Y,Z)$ and $V\in \mathcal B(X,Y)$ and let $1\less p,q,r<\infty$ satisfy $\frac{1}{r}=\frac{1}{p}+\frac{1}{q}$.
 \begin{enumerate}[(i)]
 \item If $U$ is compact and $V$ is a $p$-integral operator, or if $U$ is a $p$-integral operator and $V$ is compact, then the product $UV$ is a $p$-nuclear operator. 
\item If $U$ is an absolutely $p$-summing operator and $V$ is a $q$-nuclear operator, or if $U$ is a $p$-nuclear operator and $V$ is an absolutely $q$-summing operator, then $UV$ is a $r$-nuclear operator. 
\item  If $U$ and $V$ are absolutely $2$-summing operators, then $UV$ is a nuclear operator.
\end{enumerate}
 \end{theorem}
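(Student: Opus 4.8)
The plan is to treat all three items as instances of one mechanism: each is a composition of two ideal members, so by the ideal properties recalled above one factor can be pushed onto the other, and any absolutely summing factor can be put in Pietsch normal form (Theorem~\ref{Pietsch-theorem}). Concretely, an absolutely $p$-summing operator factors through the canonical inclusion $i_p\colon C(K)\to L^p(\nu)$ with $\nu$ a \emph{probability} measure, and the whole point is to understand how this single inclusion upgrades the summability class of whatever it is composed with. The probability normalisation of $\nu$ is what lets Hölder/Cauchy--Schwarz convert the arithmetic $\frac1r=\frac1p+\frac1q$ into a genuine gain of exponent.

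For (i) I would first observe that in either configuration $UV$ is compact (the compact operators form an ideal) and $p$-integral (by \eqref{Diestel-eq7} and the ideal property of $\mathcal I_p$). It then remains to prove the single fact that \emph{a compact $p$-integral operator is $p$-nuclear}. For this I use the integral factorisation of $UV$ through $i_\mu\colon L^\infty(\mu)\to L^p(\mu)$: since $C(K)$-spaces and $L^p$-spaces have the (metric) approximation property, the compact factor occurring in that factorisation is a norm limit of finite-rank operators; a finite-rank operator composed with an integral operator is nuclear, and one checks that the resulting $\nu_p$-norms form a Cauchy sequence, so the limit is $p$-nuclear. A final descent from the bidual (the range of the nuclear map already lies in $Z$) gives $UV\in\mathcal N_p(X,Z)$. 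The delicate point is the use of the approximation property to replace ``compact'' by ``approximable''.

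For (iii) --- the case $p=q=2$, $r=1$ --- I would Pietsch-factor both $2$-summing operators and reduce, after absorbing all bounded pieces, to showing that $\widetilde w\circ i_2\in\mathcal N_1(C(K),H)$ whenever $\widetilde w\in\mathcal S^2(L^2(\nu),H)$ and $i_2\colon C(K)\to L^2(\nu)$ is the formal inclusion. Representing the Hilbert--Schmidt map $\widetilde w$ through its $H$-valued kernel $\kappa\in L^2(\nu;H)$, one has $\widetilde w\,i_2 f=\int_K f(t)\,\kappa(t)\,d\nu(t)$; discretising this integral with simple approximants of $\kappa$ produces finite-rank operators $\sum_i(\mathbf 1_{A_i}\,d\nu)\otimes c_i$ whose nuclear norms are bounded by $\int_K\|\kappa(t)\|_H\,d\nu(t)\le\|\kappa\|_{L^2(\nu;H)}\,\nu(K)^{1/2}=\|\widetilde w\|_{\mathcal S^2}$, the middle step being Cauchy--Schwarz against the probability measure. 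This is precisely where two ``$\ell^2$'' data combine into one ``$\ell^1$''. I expect this to be the main obstacle and the heart of the matter: the naive singular-value decomposition of $\widetilde w$ does \emph{not} work, since $\sum_k\|i_2^{*}g_k\|_{C(K)'}^2$ may diverge for an orthonormal basis $(g_k)$, and one is forced to use the continuous (integral) representation rather than the discrete one.

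For (ii) I would again reduce, via Pietsch factorisation of the summing factor, to a statement about $i_p\colon C(K)\to L^p(\nu)$: for the first configuration, to $i_p\circ W\in\mathcal N_r(X,L^p(\nu))$ for $W\in\mathcal N_q(X,C(K))$, and for the second to the dual statement obtained by transposing. Writing $W=\sum_n\lambda_n\varphi_n\otimes h_n$, weak $q'$-summability of $(h_n)$ in $C(K)$ yields the pointwise bound $\sum_n|h_n(t)|^{q'}\le1$, whence $\big\|\sum_n b_n h_n\big\|_{L^\infty(\nu)}\le\|b\|_{\ell^q}$; this already makes the vectors $(h_n)$, viewed in $L^p(\nu)$, weakly $r'$-summable. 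The genuine difficulty, and the main obstacle for this item, is the \emph{sharp} passage from the coefficient space $\ell^q$ to $\ell^r$: a crude order-bounded estimate (via Theorem~\ref{Diestel-theorem8}(i)) only delivers $\mathcal N_p$, so one must balance the $\ell^q$ coefficients against the $p$-summing gain of $i_p$ through Hölder against the probability measure $\nu$, exactly as $\frac1r=\frac1p+\frac1q$ prescribes. This exponent bookkeeping --- most cleanly carried out in the language of the Chevet--Saphar tensor norms --- is the step I would expect to require the most care.
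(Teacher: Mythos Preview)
Your proposal goes well beyond what the paper actually does. In the paper, Theorem~\ref{Diestel-theorem9} is not proved at all: the proof consists of three bare citations to the textbook of Diestel--Jarchow--Tonge (Theorems~5.27--5.29 and~5.31 there). The result is quoted as background, not established.

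That said, your sketches are broadly in line with the standard arguments one finds in that reference. For (i), reducing to ``compact $p$-integral implies $p$-nuclear'' via the approximation property of $L^p$-spaces is exactly the mechanism behind DJT~5.27--5.28. For (iii), your kernel approach is a legitimate route; DJT~5.31 proceeds instead by first showing that a $2$-summing operator into a Hilbert space factors through a Hilbert--Schmidt map and then invoking the trace-class factorisation $\mathcal S^2\cdot\mathcal S^2\subset\mathcal S^1$, but the underlying Cauchy--Schwarz against the Pietsch probability measure is the same. For (ii), your diagnosis of the difficulty (the sharp exponent bookkeeping via H\"older) is correct, and the Chevet--Saphar tensor-norm language you allude to is indeed one clean way to carry it out; DJT~5.29 argues more directly with the factorisation, but the content is equivalent. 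One small caution: in (i) you should be explicit that the descent from the bidual requires the local reflexivity principle or an equivalent device, since the $p$-integral factorisation only lands in $Z''$ a priori. Overall, nothing in your outline is wrong, but for the purposes of this paper a citation suffices.
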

 \begin{proof}
 The assertion $(i)$ is proved in \cite[Theorems 5.27 and 5.28]{DJT}. The assertion $(ii)$  follows from  \cite[Theorem 5.29]{DJT}. The assertion $(iii)$ is contained  in \cite[Theorem 5.31]{DJT}. 
  \end{proof}
 
 \subsection{Convolution on $L^p(0,1)$} 
 In this subsection, we briefly recall some standard facts on the convolution on $L^p(0,1)$. 
 Let $1\less p<\infty$ and $p'$ its conjugate exponent. For every  $f\in L^p(0,1)$ and $g\in L^{p'}(0,1)$, we define, for every $x\in [0,1]$, 
 %Then, we may extend $f$ and $g$ on $\mathbb R$, putting $f(t)=g(t)=0$ for $t\in\mathbb R\setminus [0,1]$. 

 \[
(f\ast g)(x)=\int_0^x f(t)g(x-t)\,dt .
 \]
It is well known that $f\ast g\in C_0([0,1])$. Moreover, thanks to H\"older's inequality, we have $\|f\ast g\|_\infty \less \|f\|_p \|g\|_{p'}$. 
%Moreover, if $f,g\in L^1(0,1)$, then the convolution $(f\ast g)(x)$ is defined (as above) for almost all $x\in [0,1]$ and $f\ast g\in L^1(0,1)$, with $\|f\ast g\|_1\less \|f\|_1\|g\|_1$. 
More generally, recall the Young's convolution inequality. Let $1\less p,q,r\less \infty$ satisfying $\frac{1}{p}+\frac{1}{q}=\frac{1}{r}+1$ and let $f\in L^p(0,1)$, $g\in L^q(0,1)$. Then the convolution $(f\ast g)(x)$ is defined (as above) for almost all $x\in [0,1]$ and  $f\ast g\in L^r(0,1)$, with $\|f\ast g\|_r\less \|f\|_p\|g\|_q$. 
%More generally, let $\mathcal M([0,1])$ be the space of regular complex Borel measures on $[0,1]$, and given $\mu\in\mathcal M([0,1])$ and $f\in L^1(0,1)$ we set
%\[
%(f\ast \mu )(x)=\int_0^x f(x-t)\,d\mu(t).
%\]
%Then $f\ast \mu$ is well defined for almost all $x\in [0,1]$ and $f\ast \mu \in L^1(0,1)$, with $\|f\ast \mu\|_1\less \|f\|_1\|\mu\|$, where $\|\mu\|$ is the total variation of $\mu$. 

When studying the nuclearity of $V_\xi$ on $C([0,1])$, we shall need the following result of factorization.
\begin{theorem}[Salem]\label{thm:salem}
Let $f\in C([0,1])$ with $f(0)=f(1)$ and let $\delta>0$. Then there exists $g\in C([0,1])$ with $g(0)=g(1)$ and $h\in L^1(0,1)$ such that 
\[
f=g\ast h\quad \mbox{and }\quad \|g-f\|_\infty\leq \delta \mbox{ and }\|h\|_1=1.
\]
\end{theorem}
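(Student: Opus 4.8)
The plan is to recognise the statement as a Cohen-type factorization on the circle. Writing $\T=\R/\Z$ for the circle obtained by gluing the endpoints of $[0,1]$, the hypothesis $f(0)=f(1)$ means exactly that $f\in C(\T)$; here $\ast$ is the circular convolution, $L^1(\T)$ is a commutative Banach algebra under $\ast$, and $C(\T)$ is a Banach module over it. The device that will force $\|h\|_1=1$ \emph{exactly} is positivity: if $h\ge0$ and $\int_0^1 h=1$, then $\|h\|_1=1$, so I would manufacture $h$ as a limit of probability densities. To this end I would fix $\lambda\in(0,\tfrac12)$, fix a positive approximate identity $(\phi_j)_{j\ge1}$ in $L^1(\T)$ --- for instance the Fej\'er kernels, so that $\phi_j\ge0$, $\int_0^1\phi_j=1$, and $\phi_j\ast u\to u$ in $C(\T)$ for $u\in C(\T)$ and in $L^1(\T)$ for $u\in L^1(\T)$ --- and work in the measure algebra $M(\T)$, the unitization of $L^1(\T)$, with unit the Dirac mass $\delta_0$.

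Next I would run an adaptive iteration, setting $a_0=\delta_0$, $g_0=f$ and, once $a_{n-1},g_{n-1}$ are known, choosing an index $m_n$ and putting $u_n=(1-\lambda)\delta_0+\lambda\phi_{m_n}$, $a_n=u_1\ast\cdots\ast u_n$ and $g_n=u_n^{-1}\ast g_{n-1}=a_n^{-1}\ast f$. Each $u_n=\delta_0-\lambda(\delta_0-\phi_{m_n})$ is invertible in $M(\T)$ since $\|\lambda(\delta_0-\phi_{m_n})\|_{M(\T)}\less 2\lambda<1$, with Neumann bound $\|u_n^{-1}\|_{M(\T)}\less(1-2\lambda)^{-1}$; moreover $a_n$ is a probability measure with atomic part $(1-\lambda)^n\delta_0$ and absolutely continuous part $r_n\in L^1(\T)$. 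The crucial freedom is that $m_n$ may be chosen so large that both
\[
\|g_{n-1}-\phi_{m_n}\ast g_{n-1}\|_\infty<\eta_n
\quad\text{and}\quad
\|r_{n-1}-\phi_{m_n}\ast r_{n-1}\|_1<\eta_n
\]
hold at once, which is legitimate because $(\phi_j)$ is an approximate identity and $g_{n-1}\in C(\T)$, $r_{n-1}\in L^1(\T)$. One then checks the telescoping identities $g_n-g_{n-1}=\lambda\,u_n^{-1}\ast(g_{n-1}-\phi_{m_n}\ast g_{n-1})$ and $a_n-a_{n-1}=-\lambda\,a_{n-1}\ast(\delta_0-\phi_{m_n})$, which give
\[
\|g_n-g_{n-1}\|_\infty\less\frac{\lambda}{1-2\lambda}\,\eta_n,\qquad
\|a_n-a_{n-1}\|_{M(\T)}\less\lambda\big(2(1-\lambda)^{n-1}+\eta_n\big).
\]

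Finally, choosing $\sum_n\eta_n<\infty$ small enough that $\tfrac{\lambda}{1-2\lambda}\sum_n\eta_n\less\delta$, both right-hand sides are summable, so $(g_n)$ is Cauchy in $C(\T)$ and $(a_n)$ is Cauchy in $M(\T)$. Thus $g_n\to g\in C(\T)$ with $\|g-f\|_\infty=\|g-g_0\|_\infty\less\delta$ (and $g(0)=g(1)$ since $g\in C(\T)$), while $a_n\to h$ in $M(\T)$; as the atomic part $(1-\lambda)^n\delta_0$ vanishes and the $L^1$-parts $r_n$ converge in $L^1(\T)$, the limit $h$ is absolutely continuous, and being a norm-limit of nonnegative mass-one measures it satisfies $h\ge0$ and $\int_0^1 h=1$, i.e. $h\in L^1(0,1)$ with $\|h\|_1=1$. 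Passing to the limit in $a_n\ast g_n=f$ (valid for every $n$ since $g_n=a_n^{-1}\ast f$), via the continuity of $M(\T)\times C(\T)\to C(\T)$, yields $h\ast g=f$, that is $f=g\ast h$. The main obstacle is this simultaneous, bootstrapped control: the single index $m_n$ must be chosen to satisfy both approximate-identity estimates at once, balancing the summability that makes $(a_n)$ and $(g_n)$ converge against the constraint that $g$ remain within $\delta$ of $f$, all while keeping the $\phi_j$ positive so that the normalisation $\|h\|_1=1$ is achieved exactly rather than merely up to a small error.
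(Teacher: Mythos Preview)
Your argument is a correct implementation of Cohen's factorization theorem adapted to the module $C(\T)$ over $L^1(\T)$, with the positivity of the Fej\'er kernels exploited to obtain $\|h\|_1=1$ exactly. The paper itself gives no proof of this statement: it simply cites \cite[Theorem 32.31]{Hewitt-Ross} (the general compact-group version) and \cite[Exercice 3.1, p.~70]{Katz}, together with the remark that $\{f\in C([0,1]):f(0)=f(1)\}$ identifies with $C(\T)$. Your write-up is therefore more than the paper provides, and indeed reproduces the method underlying those references --- Hewitt--Ross prove exactly this Cohen-style iteration with an approximate identity, and Katznelson's exercise invites the same construction.

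One minor point worth making explicit for the reader: the conclusion $h\in L^1(\T)$ rests on the convergence of the absolutely continuous parts $r_n$ in $L^1$, which you assert but do not compute. From $a_n=(1-\lambda)^n\delta_0+r_n$ and your telescoping identity one gets
\[
r_n-r_{n-1}=\lambda(1-\lambda)^{n-1}\phi_{m_n}-\lambda\bigl(r_{n-1}-\phi_{m_n}\ast r_{n-1}\bigr),
\]
so $\|r_n-r_{n-1}\|_1\le\lambda(1-\lambda)^{n-1}+\lambda\eta_n$, which is summable; this closes the only gap a careful reader might flag.
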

\begin{proof}
See \cite[Theorem 32.31]{Hewitt-Ross} or \cite[Exercice 3.1, page 70]{Katz}. Note that the result in \cite{Hewitt-Ross} is stated and proved in the general context of a compact group. Using the usual identification of functions continuous on the unit circle $\mathbb T$ with the functions  $f\in C([0,1])$ satisfying $f(0)=f(1)$, we obtain this version.
\end{proof}

In Section 3, we revisit the unicellularity of the Riemann-Liouville semigroup and the method is based on the following classical result of Titchmarsh that we recall now.  See \cite{Titch}, or \cite{Mikusinski} for a simpler proof.
\begin{theorem}[Titchmarsh]\label{thm:titchmarsh}
Let $F$ and $G$ be two functions in $L^1(0,1)$. If $F\ast G=0$ almost everywhere on $(0,1)$, then there exists a number $\alpha$ $(0\less \alpha\less 1)$ such that $F$ and $G$ are equal to zero almost everywhere on the intervals $[0,\alpha]$ and $[0,1-\alpha]$ respectively.
\end{theorem}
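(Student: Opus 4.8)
The plan is to establish the equivalent, more symmetric formulation of the statement: writing $a$ and $b$ for the essential left endpoints of the supports of $F$ and $G$ (that is, $a=\sup\{x\in[0,1]: F=0\text{ a.e. on }(0,x)\}$, and similarly for $b$), I would prove that $a+b\gtr 1$, and then recover the stated conclusion by setting $\alpha=a$. Indeed $F=0$ a.e. on $[0,\alpha]$ by the definition of $a$, while $a+b\gtr 1$ forces $1-\alpha\less b$, so that $G=0$ a.e. on $[0,1-\alpha]$; and $\alpha=a\in[0,1)$ respects the range $0\less\alpha\less 1$. The degenerate cases are disposed of first: if $F=0$ a.e. take $\alpha=1$, and if $G=0$ a.e. take $\alpha=0$. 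From now on both $F$ and $G$ are nonzero, with $a,b\in[0,1)$.

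Next I would pass to the analytic side. Extend $F$ and $G$ by zero to all of $\R$ and introduce their Laplace transforms $\phi(z)=\int_0^1 F(t)e^{-zt}\,dt$ and $\psi(z)=\int_0^1 G(t)e^{-zt}\,dt$, which are entire functions of exponential type. The product $\phi\psi$ is exactly the Laplace transform of the full convolution $H=F\ast G$, an $L^1$ function supported in $[0,2]$ (and not identically zero, since $\phi,\psi\not\equiv 0$). The hypothesis $F\ast G=0$ a.e. on $(0,1)$ says precisely that $H$ is supported in $[1,2]$, and this translates into a decay statement for $\phi\psi$ along the positive real axis: $|\phi(r)\psi(r)|\less e^{-r}\|H\|_1$ for $r>0$.

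The engine of the proof is the identification of the left support endpoint of a compactly supported $L^1$ function with the growth of its Laplace transform along the positive real axis. Writing $h_g(\theta)=\limsup_{r\to\infty} r^{-1}\log|\hat g(re^{i\theta})|$ for the Phragm\'en--Lindel\"of indicator of the transform $\hat g$ of a compactly supported $g$, P\'olya's theorem (equivalently, the Paley--Wiener description together with indicator theory) asserts that the conjugate indicator diagram of $\hat g$ is exactly the convex hull of the support of $g$; in particular $h_\phi(0)=-a$, $h_\psi(0)=-b$, and $h_{\phi\psi}(0)=-\ell$, where $\ell$ is the left endpoint of the support of $H$. Since $H$ is supported in $[1,2]$, we have $\ell\gtr 1$, i.e. $h_{\phi\psi}(0)\less -1$. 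The theorem therefore reduces to the identity $h_{\phi\psi}(0)=h_\phi(0)+h_\psi(0)$, that is, to $\ell=a+b$. One inequality, $\ell\gtr a+b$, is elementary, being the support inclusion $\operatorname{supp}(F\ast G)\subset\operatorname{supp}F+\operatorname{supp}G$ (equivalently, the subadditivity $h_{\phi\psi}(0)\less h_\phi(0)+h_\psi(0)$). \textbf{The crux, and the main obstacle, is the reverse inequality} $\ell\less a+b$: the convolution must genuinely ``switch on'' no later than $a+b$. At the level of indicators this is the \emph{superadditivity} $h_{\phi\psi}(0)\gtr h_\phi(0)+h_\psi(0)$, which fails for general entire functions and holds here only because $\phi$ and $\psi$ have completely regular growth in the direction of the extreme point of their diagrams; this is the content of Lindel\"of's theorem on the indicator of a product, or, packaged differently, of the exactness in P\'olya's representation. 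Combining $\ell=a+b$ with $\ell\gtr 1$ gives $a+b\gtr 1$, as required.

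An alternative, purely real-variable route avoiding the theory of entire functions is Mikusi\'nski's argument, which derives the result inside the convolution algebra of $L^1(0,1)$ by an elementary estimate; I would fall back on it to keep the proof self-contained within real analysis. In either approach the substantive difficulty is the same phenomenon, namely that cancellation cannot persist past the sum of the support endpoints, and it is this non-cancellation, rather than any formal manipulation, that carries the weight of the theorem.
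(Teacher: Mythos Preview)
The paper does not supply its own proof of this classical result: immediately after the statement it writes ``See \cite{Titch}, or \cite{Mikusinski} for a simpler proof.'' There is therefore no in-paper argument to compare against.

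Your complex-analytic route (Laplace transforms, exponential type, Phragm\'en--Lindel\"of indicator, P\'olya's identification of the conjugate indicator diagram with the convex hull of the support) is precisely the circle of ideas behind Titchmarsh's original proof in \cite{Titch}; and your announced fallback is exactly Mikusi\'nski's real-variable proof \cite{Mikusinski}. So your two proposed approaches coincide with the two references the paper cites. Your sketch is sound, and you have correctly located the one genuinely nontrivial step: the \emph{equality} $h_{\phi\psi}(0)=h_\phi(0)+h_\psi(0)$ (equivalently, that the left support endpoint of $F\ast G$ is exactly $a+b$), which does not follow from the trivial subadditivity and requires either Lindel\"of's theorem on indicators of products of functions of completely regular growth or, equivalently, the sharp form of P\'olya's representation. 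If you choose to write this out in full rather than cite it, be aware that this step is where all the analytic content lies; the Mikusi\'nski alternative trades the entire-function machinery for an elementary but somewhat delicate estimate within $L^1$, and is shorter to make self-contained.
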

More precisely, we shall need the following consequence that we prove now.
\begin{corol}\label{Cor:titch}
Let $\xi\in\mathbb C_0$, let $f\in L^p(0,1)$ and let $g\in L^{p'}(0,1)$ where $1\less p<\infty$ and $p'$ its conjugate exponent. Assume that for every $n\gtr 1$, we have 
\[
\int_0^1 g(x)\left(\int_0^x f(s)(x-s)^{n\xi-1}\,ds\right)\,dx=0.
\]
Then there exists  $\alpha\in [0,1] $ such that $f(s)=0$ for almost all $s\in [0,\alpha]$ and $g(s)=0$ for almost all $s\in [\alpha,1]$.
\end{corol}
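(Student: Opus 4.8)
The plan is to reduce the whole family of hypotheses to a single convolution identity and then to invoke Titchmarsh's theorem (Theorem~\ref{thm:titchmarsh}). First I would rewrite the double integral via the change of variables $u=x-s$. Since Hölder's inequality gives $\iint_{0\less s\less x\less 1}(x-s)^{n\Re(\xi)-1}|f(s)||g(x)|\,ds\,dx\less \|f\|_p\|g\|_{p'}/(n\Re(\xi))<\infty$, Fubini applies and, setting
\[
\Psi(u)=\int_0^{1-u}f(s)g(s+u)\,ds,\qquad u\in[0,1],
\]
(a bounded function, with $\sup_{u}|\Psi(u)|\less \|f\|_p\|g\|_{p'}$), the hypothesis becomes
\[
\int_0^1\Psi(u)\,u^{n\xi-1}\,du=0\qquad(n\gtr 1).
\]

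Next I would prove that $\Psi=0$ almost everywhere. Consider $\dis G(z)=\int_0^1\Psi(u)u^{z-1}\,du$, which is holomorphic on $\C_0$ and satisfies $|G(z)|\less \|\Psi\|_\infty/\Re(z)$; by hypothesis $G(n\xi)=0$ for every $n\gtr 1$. The shifted function $\tilde G(z)=G(z+1)$ is then bounded on $\C_0$, and transporting it to the unit disc $\D$ through the Cayley map $z\mapsto (z-1)/(z+1)$ yields a bounded holomorphic function on $\D$ whose zero set contains the points $\zeta_n=(n\xi-2)/(n\xi)=1-\tfrac{2}{n\xi}$. Because $\Re(\xi)>0$, one has $\Re(1/\xi)=\Re(\xi)/|\xi|^2>0$, whence a direct computation gives $1-|\zeta_n|\sim 2\Re(\xi)/(n|\xi|^2)$ and therefore $\sum_n(1-|\zeta_n|)=\infty$. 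Thus the Blaschke condition fails, and a bounded holomorphic function on $\D$ with a non-Blaschke zero sequence must vanish identically; hence $G\equiv 0$ on $\C_0$. In particular $\int_0^1\Psi(u)u^m\,du=0$ for every integer $m\gtr 0$, and since polynomials are dense in $C([0,1])$ and $\Psi$ is bounded, this forces $\Psi=0$ almost everywhere.

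Finally I would undo the reflection. Writing $\tilde g(y)=g(1-y)$, one checks that $\Psi(u)=(f\ast\tilde g)(1-u)$, so $\Psi=0$ a.e. means $f\ast\tilde g=0$ a.e. on $(0,1)$. As $f\in L^p(0,1)\subset L^1(0,1)$ and $\tilde g\in L^{p'}(0,1)\subset L^1(0,1)$, Theorem~\ref{thm:titchmarsh} provides $\alpha\in[0,1]$ with $f=0$ a.e.\ on $[0,\alpha]$ and $\tilde g=0$ a.e.\ on $[0,1-\alpha]$; the latter reads $g=0$ a.e.\ on $[\alpha,1]$, which is exactly the desired conclusion.

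The main obstacle is the middle step, namely passing from the vanishing of $G$ on the sparse sequence $\{n\xi\}$ to $G\equiv 0$. Since the points $n\xi$ escape to infinity, the identity theorem does not apply directly; the essential input is that $\Re(\xi)>0$ makes the corresponding disc zeros $\zeta_n$ approach the boundary point $1$ slowly enough (like $1/n$) to violate the Blaschke condition, which is precisely what forces the bounded holomorphic function to be identically zero.
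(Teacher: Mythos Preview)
Your proof is correct and follows the same overall architecture as the paper: rewrite the hypothesis via Fubini as $\int_0^1 h(u)\,u^{n\xi-1}\,du=0$ for all $n\gtr1$, show that the auxiliary function $h$ (your $\Psi$) vanishes, and then apply Titchmarsh's theorem after the reflection $\tilde g(y)=g(1-y)$. The one genuine difference is in the uniqueness step. The paper notes that $h\in L^2(0,1)$ and invokes the M\"untz theorem: with $\lambda_n=n\xi-1$ one has $\sum_n\frac{1/2+\Re(\lambda_n)}{|\lambda_n+1/2|^2+1}=\infty$, so $\{u^{n\xi-1}:n\gtr n_0\}$ is complete in $L^2(0,1)$ and hence $h=0$. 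You instead form the Mellin transform $G(z)=\int_0^1\Psi(u)u^{z-1}\,du$, shift and transport it to $\D$, and use the failure of the Blaschke condition $\sum_n(1-|\zeta_n|)=\infty$ to force $G\equiv0$. These are essentially the same complex-analytic fact viewed from two sides---the $L^2$ M\"untz theorem is commonly proved by exactly such a Blaschke argument---so your route is a bit more self-contained (no appeal to an external completeness theorem) while the paper's is shorter once M\"untz is taken as quotable. One small remark: your points $\zeta_n=(n\xi-2)/(n\xi)$ lie in $\D$ only once $n\Re(\xi)>1$, but a tail of the sequence is of course enough for the Blaschke divergence.
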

\begin{proof}
Using the change of variable $u=x-s$ in the inner integral, the hypothesis can be rewritten as 
\[
\int_0^1 g(x)\left(\int_0^x f(x-u)u^{n\xi-1}\,du\right)\,dx=0,\qquad n\gtr 1.
\]
We apply now the Fubini theorem to get 
\begin{equation}\label{eq:Titchmarsh}
\int_0^1 u^{n\xi-1}\left(\int_u^1 g(x)f(x-u)\,dx\right)\,du=0,\qquad n\gtr 1.
\end{equation}
Denote by $h$ the function defined by $h(u)=\int_u^1 g(x)f(x-u)\,dx$, $u\in [0,1]$. 
It is standard that this defines a continuous function on $[0,1]$ and in particular $h$ belongs to $L^2(0,1)$.  
 
Consider an integer $n_0$ such that $n_0>(2\Re(\xi))^{-1}$. If $\lambda_n:=n\xi-1$, we have for every $n\gtr n_0$, $\lambda_n\in\mathbb C_{-1/2}$ and since 
\[
\frac{\frac{1}{2}+\Re(\lambda_n)}{\left|\lambda_n+\frac{1}{2}\right|^2+1}\sim \frac{\Re(\xi)}{n|\xi|^2}, \quad \mbox{as }n\to\infty,
\]
it follows that 
\[
\sum_{n\gtr n_0}\frac{\frac{1}{2}+\Re(\lambda_n)}{\left|\lambda_n+\frac{1}{2}\right|^2+1}=+\infty.
\]
Hence, by the M\"untz theorem, the system $\{u^{n\xi-1}:n\gtr n_0\}$ is complete in $L^2(0,1)$ (see for instance \cite{FL}). 
Thus, we get from \eqref{eq:Titchmarsh} that, for every $u\in [0,1]$, $h(u)=0$. In particular, for every $u\in [0,1]$, we have 
\[
0=h(1-u)=\int_{1-u}^1 g(x)f(x-1+u)\,dx=\int_0^u g(1-v)f(u-v)\,dv.
\]
By the Titchmarsh theorem, there exists $\alpha\in [0,1]$ such that $f(v)=0$ for almost all $v\in [0,\alpha]$ and $g(1-v)=0$ for almost all $v\in [0,1-\alpha]$, which gives the conclusion.
\end{proof}

\section{The Riemann-Liouville semigroup}\label{section-semi-group} 

We now present and prove the main properties of the Riemann-Liouville semigroup. Some of them are well-known but some short proofs are provided for self-completeness.
First of all, we confirm that the definition given in the introduction makes sense. 
Indeed, for $\xi\in\mathbb C_0$, let us introduce the function $\phi_\xi$ defined by 
\begin{equation}\label{phi}
\phi_\xi(u)=\frac{1}{\Gamma(\xi)}u^{\xi-1},\qquad u\in (0,1).
 \end{equation}
Clearly $\phi_\xi\in L^1(0,1)$ with norm $\dfrac{1}{\Re(\xi)|\Gamma(\xi)|}\,\cdot$ Therefore the operator $V_\xi$ is actually defined as $V_\xi f=f\ast \phi_\xi$ for any $f\in L^1(0,1)$ so that the formulas \eqref{RL1} and \eqref{RL2} given in the introduction make sense for almost every $x\in(0,1)$ for any $f\in L^p(0,1)\subset L^1(0,1)$.
We can now present some results on the boundedness of $V_\xi:L^p(0,1)\to L^q(0,1)$.

\begin{prop}\label{bound}\
Let $1\less p\less q<\infty$.
\begin{enumerate}[(i)]
\item If $\xi\in\mathbb C_{\frac{1}{p}-\frac{1}{q}}$, then $V_\xi:L^p(0,1)\to L^q(0,1)$ is bounded and we have
\begin{equation}\label{normVtpq}
\|V_\xi\|_{\mathcal B(L^p,L^q)}\less\dfrac{1}{|\Gamma(\xi)|}\Bigg(\dfrac{\frac{1}{p'}+\frac{1}{q}}{\Re(\xi)-\frac{1}{p}+\frac{1}{q}}\Bigg)^{\frac{1}{p'}+\frac{1}{q}}\cdot
\end{equation}
\item In particular, for every $p\gtr1$, and for every $\xi\in\C_0$, the operator $V_\xi$ is bounded from $L^p(0,1)$ to itself and we have 
\begin{equation}\label{normVt0}
\|V_\xi\|_{\mathcal B(L^p)}\less\dfrac{1}{\Re(\xi)|\Gamma(\xi)|}\cdot
\end{equation}

\item When $1< p< q<\infty$ and $\Re(\xi)=\frac{1}{p}-\frac{1}{q}\,$, then $V_\xi$ is bounded from $L^p(0,1)$ to $L^q(0,1)$ and 
\begin{equation}\label{normVtHL}
\|V_\xi\|_{\mathcal B(L^p,L^q)}\less K_{p,q}\frac{|\Gamma(\Re(\xi))|}{|\Gamma(\xi)|}\cdot
\end{equation}
where $K_{p,q}$ depends on $p$ and $q$ only.

\item If $p>1$ and $\xi\in\mathbb C_{\frac{1}{p}}$, then $V_\xi:L^p(0,1)\to C_0([0,1])$ is bounded and we have
\begin{equation}\label{eq:sddqdsqd02343EDSDCWl}
\|V_\xi\|_{\mathcal B(L^p,C_0)}\less \frac{1}{|\Gamma(\xi)|}\frac{1}{((\Re(\xi)-1)p'+1)^{1/p'}}\,\cdot
\end{equation}

\item If $\Re(\xi)\gtr 1$, then  $V_\xi:L^1(0,1)\to C_0([0,1])$ is bounded and we have
\[
\|V_\xi\|_{\mathcal B(L^1,C_0)}\less \frac{1}{|\Gamma(\xi)|}\,\cdot
\]

\item Let $\xi\in\C_0$. Then  $V_\xi:L^\infty(0,1)\to C_0([0,1])$ is bounded and we have 
$$\|V_\xi\|_{\mathcal B(L^\infty,C_0)}\less \frac{1}{\Re(\xi)|\Gamma(\xi)|}\,\cdot$$
\end{enumerate}
\end{prop}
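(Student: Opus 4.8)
The whole proposition rests on the single observation, already recorded just before the statement, that $V_\xi f=f\ast\phi_\xi$ with $\phi_\xi(u)=u^{\xi-1}/\Gamma(\xi)$, so that $|\phi_\xi(u)|=u^{\Re(\xi)-1}/|\Gamma(\xi)|$. The plan is to first compute, for $1\less s<\infty$, the norm
\[
\|\phi_\xi\|_{L^s(0,1)}=\frac{1}{|\Gamma(\xi)|}\big((\Re(\xi)-1)s+1\big)^{-1/s},
\]
which is finite exactly when $\Re(\xi)>1-\tfrac1s$, while $\|\phi_\xi\|_\infty\less 1/|\Gamma(\xi)|$ as soon as $\Re(\xi)\gtr1$. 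Every bound in the statement is then obtained by feeding this computation into either Young's inequality (parts (i), (ii)) or a pointwise H\"older estimate (parts (iv), (v), (vi)), the only genuinely different case being the borderline (iii).

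For (i) I would apply Young's convolution inequality with exponents $p,s,q$ linked by $\frac1p+\frac1s=1+\frac1q$, i.e. $\frac1s=\frac{1}{p'}+\frac1q$. The requirement $\phi_\xi\in L^s$ becomes $\Re(\xi)>1-\frac1s=\frac1p-\frac1q$, which is precisely the hypothesis $\xi\in\C_{1/p-1/q}$; substituting $s=\big(\frac{1}{p'}+\frac1q\big)^{-1}$ into the norm formula turns $\|V_\xi f\|_q\less\|\phi_\xi\|_s\|f\|_p$ into exactly \eqref{normVtpq}, after simplifying $(\Re(\xi)-1)s+1=\big(\Re(\xi)-\frac1p+\frac1q\big)\big/\big(\frac{1}{p'}+\frac1q\big)$. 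Part (ii) is the diagonal case $p=q$, where $s=1$ and $\|\phi_\xi\|_1=1/(\Re(\xi)|\Gamma(\xi)|)$, giving \eqref{normVt0} directly from Young with $s=1$.

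For the three estimates landing in $C_0([0,1])$ I would argue pointwise: writing $V_\xi f(x)=\int_0^x f(u)\phi_\xi(x-u)\,du$ and applying H\"older on $[0,x]$ gives $|V_\xi f(x)|\less\|f\|_{L^r(0,1)}\|\phi_\xi\|_{L^{r'}(0,1)}$ for the relevant exponent $r$. Taking $r=p$, $r'=p'$ (needing $\Re(\xi)>\frac1p$) yields (iv) with constant $\|\phi_\xi\|_{p'}$; taking $r=1$, $r'=\infty$ (needing $\Re(\xi)\gtr1$) yields (v); and taking $r=\infty$, $r'=1$ (needing only $\Re(\xi)>0$) yields (vi) with constant $\|\phi_\xi\|_1=1/(\Re(\xi)|\Gamma(\xi)|)$. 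In each case membership in $C_0([0,1])$ — continuity together with vanishing at the origin — is not proved by hand but read off from the convolution fact recalled in the preliminaries, namely that $f\ast g\in C_0([0,1])$ whenever $f\in L^p$ and $g\in L^{p'}$, applied after commuting $f\ast\phi_\xi=\phi_\xi\ast f$ so that the two factors fall in conjugate classes (for (vi) one takes $\phi_\xi\in L^1$ and $f\in L^\infty$).

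The main obstacle is the borderline case (iii), where $\Re(\xi)=\frac1p-\frac1q$ makes $\phi_\xi$ just fail to lie in the Young exponent $L^s$, so the soft convolution argument breaks down. Here I would dominate $|V_\xi f|$ by the real Riemann--Liouville fractional integral of order $\sigma:=\Re(\xi)=\frac1p-\frac1q$, writing
\[
|V_\xi f(x)|\less\frac{\Gamma(\sigma)}{|\Gamma(\xi)|}\,(I_\sigma|f|)(x),\qquad (I_\sigma g)(x)=\frac{1}{\Gamma(\sigma)}\int_0^x g(u)(x-u)^{\sigma-1}\,du,
\]
and then invoke the Hardy--Littlewood--Sobolev inequality for fractional integration on a finite interval, which gives $\|I_\sigma\|_{\mathcal B(L^p,L^q)}\less K_{p,q}$ for $1<p<q<\infty$ and $\sigma=\frac1p-\frac1q$. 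Combining the two yields \eqref{normVtHL}, the factor $|\Gamma(\Re(\xi))|/|\Gamma(\xi)|$ coming exactly from passing to the absolute-value kernel. This is the one step that is not elementary and where all the analytic content sits; everything else is bookkeeping around the norm of $\phi_\xi$.
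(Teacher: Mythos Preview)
Your proposal is correct and follows essentially the same approach as the paper: both compute $\|\phi_\xi\|_s$, apply Young's inequality for (i)--(ii), invoke the Hardy--Littlewood fractional integration inequality for the borderline case (iii), and use H\"older together with the $L^p\times L^{p'}\to C_0$ convolution fact for (iv)--(vi). The only cosmetic difference is that the paper phrases (iv)--(vi) directly as ``$\phi_\xi\in L^{p'}$ (resp.\ $C_0$, $L^1$), hence $V_\xi f=f\ast\phi_\xi\in C_0$'' rather than writing out the pointwise H\"older estimate, but the content is identical.
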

As usual, in the formulas with (conjugate) exponents, $\dfrac{1}{\infty}$ means $0$.

\begin{proof}
Denote by $\tau=\Re(\xi)$.  First we observe that, for $1\less r<\infty$, $\phi_\xi\in L^r(0,1)$ if and only if $\xi\in\C_{\frac{1}{r'}}$ and, in this case,
\begin{equation}\label{eq:sddsssdqsdqsddnc923E0}
\|\phi_\xi\|_{r}=\frac{1}{|\Gamma(\xi)|}\frac{1}{((\tau-1)r+1)^{\frac{1}{r}}}=\frac{r^{-\frac{1}{r}}}{|\Gamma(\xi)|\big(\tau-\frac{1}{r'}\big)^{\frac{1}{r}}}\,\cdot
\end{equation}

$(i)$ Let $f\in L^p(0,1)$ and $r=\Big(\dfrac{1}{p'}+\dfrac{1}{q}\Big)^{-1}$. We have $\dfrac{1}{q}=\dfrac{1}{p}+\dfrac{1}{r}-1$, and in particular $r\gtr 1$ since $q\gtr p$. 

Moreover, since $\xi\in\C_{\frac{1}{r'}}$, $\phi_\xi\in L^r(0,1)$ and according to the Young inequalities, we know that $V_\xi f \in L^q(0,1)$ with
$$\|V_\xi f\|_q\less\|\phi_\xi\|_{r}\|f\|_p\,.$$
We get \eqref{normVtpq}.

$(ii) $ is now obvious by definition of conjugate exponents.

$(iii)$ Hardy and Littlewood  proved in \cite[Theorem 4]{HL} that if $\tau=\frac{1}{p}-\frac{1}{q}$, with $1<p<q<\infty$, then, for every function $f$ in $L^p(0,1)$, we have 
\begin{equation}\label{inegalite-HL}
\left(\int_0^1 |V_\tau(f)(x)|^q\,dx\right)^{1/q}\less K \left(\int_0^1|f(x)|^p\,dx\right)^{1/p},
\end{equation}
where $K=K(p,q)$ depends only on $p$ and $q$. Observe now that, for $\Re(\xi)=\tau= \frac{1}{p}-\frac{1}{q}$, we have
\[
|V_\xi(f)(x)|\less \frac{|\Gamma(\tau)|}{|\Gamma(\xi)|}|V_\tau(|f|)(x)|,
\]
which gives that 
\[
\|V_\xi(f)\|_q\less \frac{|\Gamma(\tau)|}{|\Gamma(\xi)|} \|V_\tau(|f|)\|_q.
\]
Thus \eqref{inegalite-HL} implies that 
\[
\|V_\xi(f)\|_q \less K \frac{|\Gamma(\tau)|}{|\Gamma(\xi)|} \|f\|_p,
\]
which shows that $V_\xi$ is bounded from $L^p(0,1)$ to $L^q(0,1)$ and we have $$\|V_\xi\|_{\mathcal B(L^p,L^q)}\leq K \frac{|\Gamma(\tau)|}{|\Gamma(\xi)|}\,\cdot$$

$(iv)$ Since $\xi\in\C_{\frac{1}{p}}$, then $\phi_\xi\in L^{p'}(0,1)$. Thus~$V_\xi(f)=f\ast\phi_\xi\in~C_0([0,1])$ with 
\[
\|V_\xi f\|_\infty\leq \|f\|_p \|\phi_\xi\|_{p'}.
\]
Now, using \eqref{eq:sddsssdqsdqsddnc923E0}, we get \eqref{eq:sddqdsqd02343EDSDCWl}.

$(v)$ Observe that $\phi_\xi\in C_0([0,1])$ for $\Re(\xi)\geq 1$ with $\|\phi_\xi\|_{\infty}=\dfrac{1}{|\Gamma(\xi)|}\,\cdot$ Then, argue as in $(iv)$ with $p=1$ and $p'=\infty$.

$(vi)$ Use here $p=\infty$ and $p'=1$, and argue as in $(iv)$.
\end{proof}

\begin{rem}
\rm{Let $X=C_0([0,1])$ or $X=L^p([0,1])$ for $1\less p\less \infty$. As we have just seen, the operator $V_\xi$ is bounded from $X$ into itself and
\begin{equation}\label{normVt}
\|V_\xi\|_{\mathcal B(X)}\less\dfrac{1}{\Re(\xi)|\Gamma(\xi)|}.
\end{equation}
Note that the estimate \eqref{normVt} for the case $X=C_0([0,1])$ follows immediately from Proposition~\ref{bound} and the fact that $C_0([0,1])$ is contained in $L^\infty(0,1)$. Observe also that for $X=L^p(0,1)$, the estimate \eqref{normVt}  can also be obtained easily by interpolation, or using the Minkowski inequality.
Finally, it turns out that when $X=L^1(0,1)$ and $\xi$ is real, then \eqref{normVt} becomes an equality. 

Note that in \cite{AG}, J. Adell and E. Gallardo-Guti\'errez got some lower and upper bounds for the norm of the Riemann-Liouville operator $V_t$ on $L^p(0,1)$ when $t>0$. These estimates enable them to get some asymptotic estimates of the norm. More precisely, they proved that, for every $1\less p\less \infty$, 
\[
\lim_{t\to \infty}\Gamma(t+1)\|V_t\|_{\mathcal B(L^p)}=c_{p,q}
\]
where 
\[
c_{p,q}=\begin{cases}
p^{-1/p}q^{-1/q}&\mbox{ if }1<p<\infty\\
1 &\mbox{ if $p=1$ or $p=\infty$}.  
\end{cases}
\]
However, it should be noted that the exact computation of $\|V_\xi\|_{\mathcal B(L^p)}$ is not known.}
\end{rem}
\bigskip

We now recall the notion of analytic strongly continuous semigroup. For $\theta\in (0,\pi/2]$, let $\Sigma_\theta=\{z\in\C\setminus\{0\}:|\arg(z)|<\theta\}$. If $X$ is a Banach space, the family $(T_z)_{z\in\Sigma_\theta}$ is called an \emph{analytic strongly continuous semigroup} on $X$ if it satisfies the following properties:
\begin{enumerate}[(i)]
\item $T_z\in\mathcal B(X)$;
\item for all $z,w\in\Sigma_\theta$, we have $T_zT_w=T_{z+w}$;
\item For every $\theta'\in (0,\theta)$, we have
\[
\lim_{\substack{z\to 0\\ z\in\Sigma_{\theta'}}}\|T_zf-f\|_X=0,\quad \mbox{for all }f\in X;
\]
\item  the map $z\in\Sigma_\theta\longmapsto T_z\in\mathcal B(X)$ is holomorphic.
\end{enumerate}
Note that $\C_0=\Sigma_{\pi/2}$.
\begin{prop}\label{prop:semigroup-RL}
Let $\xi\in\C_0$, let $X$ be $C_0([0,1])$ or $L^p(0,1)$ for $p\gtr1$, and let 
$$\dis V_\xi(f)(x)=\dfrac{1}{\Gamma(\xi)}\int_0^xf(u)(x-u)^{\xi-1}\;du\,,\quad\hbox{where }\,f\in X\,\hbox{ and }x\in[0,1].$$
%
%\hskip-1cm and 
%\item Let $X$ be $L^p$ for $p>1$.
%$$\dis C_t(f)(x)=\dfrac{1}{x\Gamma(t)}\int_0^xf(u)\Big(\ln\frac{x}{u}\Big)^{t-1}\;du\qquad\hbox{where }\,f\in X,\,\hbox{ and }x\in(0,1]$$
%\end{itemize}
%\noindent 
Then $(V_\xi)_{\xi\in\C_0}$ is an analytic strongly continuous semigroup on $X$.  Moreover we have $V=V_1$.
\end{prop}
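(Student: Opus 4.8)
The statement requires verifying the four defining properties of an analytic strongly continuous semigroup on the sector $\Sigma_{\pi/2}=\C_0$. Property (i), that $V_\xi\in\mathcal B(X)$, is exactly the content of Proposition~\ref{bound} together with the Remark providing the bound \eqref{normVt}. The whole plan rests on the observation already recorded before the proposition, that $V_\xi f=f\ast\phi_\xi$ with kernel $\phi_\xi(u)=u^{\xi-1}/\Gamma(\xi)$ from \eqref{phi}; this reduces the three remaining properties to statements about the family $(\phi_\xi)_{\xi\in\C_0}$ in $L^1(0,1)$ and about the convolution map.

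For the semigroup law (ii) I would reduce $V_\xi V_\eta=V_{\xi+\eta}$ to the kernel identity $\phi_\xi\ast\phi_\eta=\phi_{\xi+\eta}$: by associativity and commutativity of the Volterra convolution, $V_\xi(V_\eta f)=(f\ast\phi_\eta)\ast\phi_\xi=f\ast(\phi_\xi\ast\phi_\eta)$. The kernel identity itself comes from the change of variable $t=xs$ in $(\phi_\xi\ast\phi_\eta)(x)=\frac{1}{\Gamma(\xi)\Gamma(\eta)}\int_0^x t^{\xi-1}(x-t)^{\eta-1}\,dt$, which produces the Euler Beta integral $\int_0^1 s^{\xi-1}(1-s)^{\eta-1}\,ds=B(\xi,\eta)=\Gamma(\xi)\Gamma(\eta)/\Gamma(\xi+\eta)$ (valid since $\Re(\xi),\Re(\eta)>0$), leaving exactly $\phi_{\xi+\eta}(x)$. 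For holomorphy (iv), the key step is to prove that $\xi\mapsto\phi_\xi$ is holomorphic as a map $\C_0\to L^1(0,1)$ and then compose it with the bounded linear map $\phi\mapsto(f\mapsto f\ast\phi)$ from $L^1(0,1)$ into $\mathcal B(X)$, whose boundedness is precisely Young's inequality, $\|f\ast\phi\|_X\less\|f\|_X\|\phi\|_1$, on both $X=L^p(0,1)$ and $X=C_0([0,1])$. To get holomorphy of $\xi\mapsto\phi_\xi$ I would invoke the standard principle that a locally bounded, weakly holomorphic Banach-space-valued map is holomorphic: local boundedness is the bound $\|\phi_\xi\|_1=1/(\Re(\xi)|\Gamma(\xi)|)$, while weak holomorphy amounts to the holomorphy on $\C_0$ of $\xi\mapsto\frac{1}{\Gamma(\xi)}\int_0^1 u^{\xi-1}g(u)\,du$ for each $g\in L^\infty(0,1)=(L^1)'$, which follows from differentiation under the integral sign (the factor $\int_0^1 u^{\tau-1}|\ln u|\,du$ being finite for $\tau>0$) and the entirety of $1/\Gamma$.

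Strong continuity (iii) is, I expect, the main obstacle and the real analytic content. Here I would show that $(\phi_z)$ behaves as an approximate identity concentrated at $0$ as $z\to0$ inside a fixed subsector $\Sigma_{\theta'}$, $\theta'<\pi/2$, and deduce $\|V_z f-f\|_X=\|f\ast\phi_z-f\|_X\to0$. Three facts are needed: a uniform $L^1$-bound $\limsup_{z\to0,\,z\in\Sigma_{\theta'}}\|\phi_z\|_1\less 1/\cos\theta'$ (using $\|\phi_z\|_1\sim|z|/\Re(z)\less 1/\cos\theta'$ on the sector, which is exactly where the sector restriction is used); total mass $\int_0^1\phi_z\,du=1/\Gamma(z+1)\to1$; and tail decay $\int_\delta^1|\phi_z|\,du=(1-\delta^{\Re(z)})/(\Re(z)|\Gamma(z)|)\to0$ for each fixed $\delta>0$. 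Extending $f$ and $\phi_z$ by zero to $\R$, one checks that $V_z f$ is the restriction to $[0,1]$ of the full-line convolution $\tilde f\ast\tilde\phi_z$, so the usual approximate-identity theorem on $\R$ yields convergence in $X$; alternatively one runs an $\varepsilon/3$-argument based on the uniform bound and convergence on the dense set of Lipschitz functions $g$ with $g(0)=0$, for which the elementary estimates $|g(x-u)-g(x)|\less Lu$ and $x\,|x^z-1|\less|z|/\mathrm{e}$ make the two resulting error terms tend to $0$ explicitly. Finally, $V=V_1$ is immediate, since $\phi_1\equiv1$ gives $V_1 f(x)=\int_0^x f(u)\,du=Vf(x)$.
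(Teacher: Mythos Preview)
Your proof is correct. Parts (i) and (ii) match the paper exactly (boundedness from Proposition~\ref{bound}, semigroup law from the Beta integral). For analyticity (iv) you factor through the holomorphy of $\xi\mapsto\phi_\xi$ in $L^1(0,1)$ and then compose with the bounded linear convolution map $L^1\to\mathcal B(X)$; the paper instead checks weak holomorphy of $\xi\mapsto V_\xi$ directly by testing against $g\in L^{p'}$ (or a Borel measure when $X=C_0$) and dominated convergence. These are equivalent, and your packaging has the minor advantage of doing the scalar verification only once.

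The genuine difference is in the strong continuity (iii). The paper argues by density of the monomials $f_n(t)=t^n$: it computes $V_\xi f_n=\frac{\Gamma(n+1)}{\Gamma(n+1+\xi)}x^{n+\xi}$ via the Beta function, then shows $\|V_\xi f_n-f_n\|_X\to0$ explicitly (using the elementary bound $\|x^{n+\xi}-x^n\|_\infty\le|\xi|/(n+\Re\xi)$), combined with the same sector-uniform bound $\|V_\xi\|\le 1/(\cos\theta'|\Gamma(\xi+1)|)$ that you also record. Your approximate-identity argument is more structural and transfers immediately to any space on which convolution by $L^1$ acts contractively; the paper's monomial computation is more hands-on but gives completely explicit rates and avoids the small care needed with the non-unit total mass $\int_0^1\phi_z=1/\Gamma(z+1)$ (which you handle correctly via your Lipschitz estimate $x|x^z-1|\le|z|/\mathrm e$). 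Either route is fine.
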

%begin{equation}\label{H}\tag{H}
%\begin{array}{ccl}
%\dis C_t(f)(x)&=& \dis\dfrac{1}{\Gamma(t)}\int_0^1f(x\theta)\,\big|\ln\theta\big|^{t-1}\;d\theta\\\smallskip
%
%&=&\dis\dfrac{1}{\Gamma(t)}\int_0^{+\infty}f\big(x\e^{-s}\big)\,s^{t-1}\,\e^{-s}\;ds.
%\end{array}
%\end{equation}
\begin{proof}
First, let us point out that, according to Proposition \ref{bound}, $V_\xi\in\mathcal B(X)$ for every $\xi\in\C_0$. 
\medskip

Now, we justify the algebraic properties of the semigroup. For $\xi$,$\xi'\in\C_0$, $f\in X$ and $x\in (0,1]$, we have 
{factVC}
%We first treat the semigroup property of $(RL)$:
$$
\begin{array}{rcl}
V_\xi V_{\xi'}(f)(x)&=&\dis\dfrac{1}{\Gamma(\xi)\Gamma(\xi')}\int_0^x (x-u)^{\xi-1}\left(\int_0^u f(s)(u-s)^{\xi'-1}\,ds\right)du\\ \medskip

&=&\dis\dfrac{1}{\Gamma(\xi)\Gamma(\xi')}\int_0^xf(s)\left(\int_s^x (u-s)^{\xi'-1}(x-u)^{\xi-1}\,du\right)ds,
\end{array}$$
using the Fubini theorem.

But, with the change of variable $u=(1-\lambda)s+\lambda x$
\begin{align*}
\int_s^x (u-s)^{\xi'-1}(x-u)^{\xi-1}\;du=&(x-s)^{\xi+\xi'-1}\int_0^1 \lambda^{\xi'-1}(1-\lambda)^{\xi-1}\;d\lambda\\
=&(x-s)^{\xi+\xi'-1}B(\xi,\xi'),
\end{align*}
where $B$ is the Beta function linked with the Gamma function by the formulae $B(\xi,\xi')=\frac{\Gamma(\xi)\Gamma(\xi')}{\Gamma(\xi+\xi')}$, for $\xi,\xi'\in\mathbb C_0$. 
We conclude that 
$$V_\xi V_{\xi'}(f)(x)=\dis\dfrac{1}{\Gamma(\xi+\xi')}\int_0^x (x-s)^{\xi+\xi'-1}f(s)\;ds=V_{\xi+\xi'}(f)(x)$$ %\dis\dfrac{B(t,t')}{\Gamma(t)\Gamma(t')}=
and $(RL)$ has the semigroup property.
\medskip

Let us now justify that for every $\theta'\in (0,\pi/2)$ and every $f\in X$, we have
\begin{equation}\label{equation-strongly-continuous}
\lim_{\substack{\xi\to 0\\ \xi\in\Sigma_{\theta'}}}\|V_\xi f-f\|_X=0,
\end{equation}
Thanks to \eqref{normVt}, we have 
\[
\|V_\xi\|_{\mathcal B(X)}\less \frac{1}{\Re(\xi)|\Gamma(\xi)|}=\frac{|\xi|}{\Re(\xi)|\Gamma(\xi+1)|}=\frac{1}{\cos(\arg(\xi))|\Gamma(\xi+1)|}\cdot
\]
But for any $\xi\in\Sigma_{\theta'}$, we have $\cos(\arg(\xi))\gtr \cos(\theta')>0$, which proves that 
\[
\sup_{\xi\in\Sigma_{\theta'}\cap D(0,1/2)}\|V_\xi\|_{\mathcal B(X)}<\infty,
\]
where $D(0,1/2)=\{\xi\in\C:|\xi|<1/2\}$. 
Denote by $f_n(t)=t^n$, $n\gtr 1$, $t\in [0,1]$ and observe that the family $\{f_n:n\gtr 1\}$ spans a dense subspace of $X$. 
Therefore, it suffices to prove \eqref{equation-strongly-continuous} for every $f=f_n$, $n\gtr 1$, to obtain that \eqref{equation-strongly-continuous} holds for every $f\in X$. According to \eqref{RL2}, for every $n\gtr 1$, we have 
\begin{equation}\label{eq:action-monome}
V_\xi(f_n)(x)=\frac{B(n+1,\xi)}{\Gamma(\xi)}x^{n+\xi}=\frac{\Gamma(n+1)}{\Gamma(n+1+\xi)}x^{n+\xi}.
\end{equation}
Now write
\begin{eqnarray*}
\|V_\xi(f_n)-f_n\|_X&=&\left\|\frac{\Gamma(n+1)}{\Gamma(n+1+\xi)}x^{n+\xi}-x^n\right\|_X \\
&\less & \left| \frac{\Gamma(n+1)}{\Gamma(n+1+\xi)}-1\right| \|x^{n+\xi}\|_X+\|x^{n+\xi}-x^n\|_X.
\end{eqnarray*}
Observe that $\|x^{n+\xi}\|_X\less \|x^{n+\xi}\|_\infty\less 1$ and by continuity of the Gamma function, we have 
\[
\lim_{\xi\to 0}\left| \frac{\Gamma(n+1)}{\Gamma(n+1+\xi)}-1\right| \|x^{n+\xi}\|_X=0.
\]
Hence it remains to justify that for every $n\gtr 1$, we have
\[
\lim_{\xi\to 0}\|x^{n+\xi}-x^n\|_X=0.
\] 
For $X=L^p(0,1)$, this follows easily from Lebesgue's dominated convergence theorem, but can also be deduced from the following case. For $X=C_0([0,1])$, observe that for every complex numbers $a,b$, $\Re(a)\neq \Re(b)$, we have 
\begin{equation}\label{inegalite-trivial-exponentielle}
\big|\e^a-\e^b\big|\less \frac{|a-b|}{|\Re(a)-\Re(b)|}\big|\e^{\Re(a)}-\e^{\Re(b)}\big|.
\end{equation}
Indeed it is sufficient to write that 
\[
\e^a-\e^b=\int_0^1 \e^{b+t(a-b)}(a-b)\,dt,
\]
which gives that 
\[
|\e^a-\e^b|\less |a-b|\int_0^1 \e^{\Re(b)+t(\Re(a)-\Re(b))}\,dt
\]
and we now obtain easily \eqref{inegalite-trivial-exponentielle}. 
Now the inequality \eqref{inegalite-trivial-exponentielle} implies 
\[
|x^{n+\xi}-x^n|\less \frac{|\xi|}{\Re(\xi)}(x^n-x^{n+\Re(\xi)}).
\]
Denote by $\tau=\Re(\xi)$. We easily check that the function $x\longmapsto x^n-x^{n+\tau}$ reached its maximum on $(0,1)$ at point $x_{n}:=(\frac{n}{n+\tau})^{1/\tau}$. Hence we get that 
\[
\|x^{n+\xi}-x^n\|_\infty\less \frac{|\xi|}{\tau}(1-x_n^\tau)=\frac{|\xi|}{\tau}\left(1-\frac{n}{n+\tau}\right)=\frac{|\xi|}{n+\tau},
\]
which obviously implies that $\lim_{\xi\to 0}\|x^{n+\xi}-x^n\|_\infty=0$. That concludes the fact that $V_\xi$ satisfies \eqref{equation-strongly-continuous}.

It remains to prove the analyticity of $\xi\in\C_0\longmapsto V_\xi\in\mathcal B(X)$. Let us first start with $X=L^p(0,1)$, $1\less p<\infty$. It is sufficient (see \cite[Theorem 3.12]{Kato}) to show that for any $f\in L^p(0,1)$ and any $g\in L^{p'}(0,1)$, the map
\[
\xi\longmapsto \int_0^1\int_0^x g(x)(x-u)^{\xi-1}f(u)\,du\,dx
\]
is analytic on $\C_0$. But $\xi\longmapsto g(x)(x-u)^{\xi-1}f(u)$ is analytic on $\C_0$ for almost all $(x,u)\in (0,1)\times (0,1)$. Moreover, for every $\xi$ such that $\Re(\xi)>\delta>0$, we have 
\[
|g(x)(x-u)^{\xi-1}f(u)|\less |g(x)|(x-u)^{\delta-1}|f(u)|,\qquad 0\less x\less 1,\,0\less u<x
\]
and  
\[
\int_0^1 \int_0^x |g(x)|(x-u)^{\delta-1}|f(u)|\,du\,dx=\Gamma(\delta)\int_0^1 |g(x)|V_\delta(|f|)(x)\,dx<\infty
\]
since $g\in L^{p'}(0,1)$ and $V_\delta(|f|)\in L^p(0,1)$ according to Proposition~\ref{bound}. It follows from the Lebesgue's dominated convergence theorem that the map $\xi\longmapsto \int_0^1\int_0^x g(x)(x-u)^{\xi-1}f(u)\,du\,dx$ is analytic on $\C_0$. A similar argument works also for $X=C_0([0,1])$ but we have to replace $g(x)dx$, $g\in L^{p'}(0,1)$, by an arbitrary Borel measure $\mu$ on $(0,1)$. 
\end{proof}

\begin{rem}
\rm{
Writing $\xi=\tau+it$ ($\tau,t$ real with $\tau>0$), we may consider the behavior of $V_\xi$ as $\tau=\Re(\xi)\to 0^+$. For $1<p<\infty$, it was proved by Kalisch \cite{Kalisch} and Fisher \cite{Fisher} that, for each fixed $f\in L^p(0,1)$, the limit 
\[
V_{it}f=\lim_{\tau\to 0^+}V_{\tau+it}f
\]
exists in the $L^p(0,1)$ norm. Furthermore, the family of operators $(V_{it})_{t\in\mathbb R}$ so defined forms a strongly continuous group of bounded operators on $L^p(0,1)$. We also have that $V_{\xi}V_{it}=V_{it}V_{\xi}=V_{\xi+it}$ for every $\xi\in\C_0$ and $t\in\mathbb R$.}
\end{rem}

\begin{rem}
\rm{
Let $1\less p<\infty$. It is not difficult to check that for every $\xi\in\mathbb C_0$, the adjoint of $V_\xi:L^p(0,1)\to L^p(0,1)$ is the operator given by
\begin{equation}\label{eq:formule-adjoint32A23}
V_\xi^\ast(f)(x)=\dfrac{1}{\overline{\Gamma(\xi)}}\int_x^1 f(u)(u-x)^{\overline{\xi}-1}\;du,\qquad f\in L^{p'}(0,1). 
\end{equation}
Actually, we work here with a kernel operator, that is why the symmetric kernel naturally appears.}
\end{rem}

The spectrum and point spectrum are well-known and easy to identify. We also give a proof for self-completeness. 
\begin{prop}
Let $\xi\in\C_0$, and let $X$ be $C_0([0,1])$ or $L^p(0,1)$ for $p\gtr1$. Then $\sigma(V_\xi)=\{0\}$ and $V_\xi$ has no eigenvalue. 
\end{prop}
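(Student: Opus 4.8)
The plan is to establish two things separately: first, that $V_\xi$ is quasinilpotent, which gives $\sigma(V_\xi)=\{0\}$; and second, that $\ker V_\xi=\{0\}$, which together with the spectral statement rules out every eigenvalue (since any eigenvalue must lie in the spectrum).

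For the spectrum I would exploit the semigroup law of Proposition~\ref{prop:semigroup-RL}, namely $V_\xi^n=V_{n\xi}$ for all $n\gtr 1$. Combined with the norm estimate \eqref{normVt}, which is valid on both $X=L^p(0,1)$ and $X=C_0([0,1])$, this yields
\[
\|V_\xi^n\|_{\mathcal B(X)}=\|V_{n\xi}\|_{\mathcal B(X)}\less\frac{1}{n\Re(\xi)\,|\Gamma(n\xi)|}.
\]
The spectral radius is $r(V_\xi)=\lim_n\|V_\xi^n\|_{\mathcal B(X)}^{1/n}$, so everything reduces to showing $|\Gamma(n\xi)|^{1/n}\to\infty$. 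Since $\Re(n\xi)=n\Re(\xi)\to\infty$ as $n\to\infty$ along the ray $\{n\xi\}$ inside the right half-plane, Stirling's formula for $\log\Gamma$ gives $\log|\Gamma(n\xi)|=n\Re(\xi)\log n+O(n)$, whence $\tfrac{1}{n}\log|\Gamma(n\xi)|=\Re(\xi)\log n+O(1)\to+\infty$. Therefore $\|V_\xi^n\|_{\mathcal B(X)}^{1/n}\to 0$ and $r(V_\xi)=0$; as the spectrum of a bounded operator on a complex Banach space is nonempty and contained in the disc of radius $r(V_\xi)$, we conclude $\sigma(V_\xi)=\{0\}$.

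For the absence of eigenvalues, note that every eigenvalue lies in $\sigma(V_\xi)=\{0\}$, so it suffices to check that $0$ is not an eigenvalue, i.e. $\ker V_\xi=\{0\}$. Suppose $V_\xi f=0$ for some $f\in X$. Recalling that $V_\xi f=f\ast\phi_\xi$ with $\phi_\xi(u)=u^{\xi-1}/\Gamma(\xi)$, and that $X\subset L^1(0,1)$ in both cases, the Titchmarsh convolution theorem (Theorem~\ref{thm:titchmarsh}) produces $\alpha\in[0,1]$ such that $f=0$ a.e. on $[0,\alpha]$ and $\phi_\xi=0$ a.e. on $[0,1-\alpha]$. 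Since $\phi_\xi(u)\neq 0$ for every $u\in(0,1)$, the second condition forces $1-\alpha=0$, that is $\alpha=1$, so $f=0$ a.e. on $[0,1]$. Hence $\ker V_\xi=\{0\}$ and $V_\xi$ has no eigenvalue.

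The argument is uniform in $X$, its only ingredients being the semigroup identity, the bound \eqref{normVt}, and Titchmarsh's theorem. The one genuinely quantitative point — and the step to get right — is the super-exponential growth of $|\Gamma(n\xi)|$ that forces quasinilpotency; once this is in hand, the nonzero eigenvalues disappear automatically and only the elementary Titchmarsh argument remains for $\lambda=0$.
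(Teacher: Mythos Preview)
Your proof is correct and follows essentially the same approach as the paper: both use the semigroup identity $V_\xi^n=V_{n\xi}$ together with the bound \eqref{normVt} and Stirling's formula to obtain $r(V_\xi)=0$, and then invoke Titchmarsh's convolution theorem to rule out $0$ as an eigenvalue. The only cosmetic difference is that you spell out the nonemptiness of the spectrum and the Stirling asymptotics a bit more explicitly.
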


\begin{proof}
We first show that the spectral radius of $V_\xi$ is equal to $0$.
Indeed for every integer $n\gtr1$, $V_\xi^n=V_{n\xi}\;$ so, thanks to \eqref{normVt}, we have
$$\big\|V_\xi^n\big\|_{\mathcal B(X)}^{\frac{1}{n}}\less(n\Re(\xi)|\Gamma(n\xi)|)^{-\frac{1}{n}}=O\big(n^{-\Re(\xi)}\big)\longrightarrow0,\quad \mbox{as }n\to\infty,$$
thanks to the Stirling Formula. Therefore we get $\sigma(V_\xi)=\{0\}$. In particular, the point spectrum of $V_\xi$ is included in $\{0\}$. 

Now we show that $V_\xi$ is one-to-one. Indeed assume that $f\in L^1(0,1)$ satisfies $V_\xi(f)=0$. It means that for every $x\in[0,1]$, we have
$$\int_0^x f(u)(x-u)^{\xi-1}\;du=0.$$
Observe that the function $g$ defined by $g(u)=u^{\xi-1}$, $u\in (0,1)$, does not vanish on $(0,1)$. Then it follows from the Titchmarsh theorem (Theorem~\ref{thm:titchmarsh}) that $f(u)=0$ for almost all $u\in (0,1)$. We finally conclude that $V_\xi$ has no eigenvalue.
\end{proof}

\section{Unicellularity}\label{sec:unicellularity}

It is a well-known fact \cite[p. 397]{GK67} that for each $t>0$, the lattice of invariant subspaces and even hyperinvariant subspaces of $V_t$ acting on $L^2(0,1)$ is totally ordered (i.e. $V_t$ is unicellular) and described by  \[\{E_{a,2}:=\ind_{[a,1]}L^2(0,1),0<a<1\} .\] 
We recall that a closed subspace $E$ of $X$ is {\emph{hyperinvariant}} for an operator $T\in\mathcal B(X)$ if $E$ is invariant with respect to any operator commuting with $T$. Let us also recall that a vector $x\in X$ is said to be cyclic for $T$ if $\text{span}(T^nx:n\geq 0)$ is dense in $X$. The description of the invariant subspaces of $V_t$ immediately yields the following description of the cyclic functions of $V_t$, namely, 
\[ f\mbox{ is cyclic for }V_t \in{\mathcal B}(L^2(0,1))\Longleftrightarrow  \forall\varepsilon>0, \int_0^\varepsilon |f(x)|^2 dx>0. \]   
In this section, we will revisit these results for $V_\xi$ extending both to the $L^p(0,1)$ case, $1\less p<\infty$, and also to the complex case $\xi\in\mathbb C_0$. 

We first characterize the cyclic vectors for $V_\xi$ on $L^p(0,1)$, when $\xi\in \mathbb C_0$ and $1\less p<\infty$. 
In order to state the result, we introduce a convenient notation. We associate with every function $f\in L^p(0,1)$ a number $\ell_f\in [0,1]$ defined by 
\[
\ell_f=\sup\left\{\ell\in [0,1]:\int_0^\ell |f(u)|^p\,du=0\right\}.
\]
Equivalently, 
\[
f=0 \quad\text{ a.e. on }(0,\ell_f)\quad\mbox{and}\quad \forall\varepsilon>0,\quad \int_{\ell_f}^{\ell_f+\varepsilon}|f(u)|^p\,du>0.
\]
\begin{theorem}\label{thm:cyclic}
Let $1\less p<\infty$, $\xi\in\mathbb C_0$ and let $f\in L^p(0,1)$. Then $f$ is a cyclic vector for $V_\xi$ if and only if $\ell_f=0$, that is, for every $\varepsilon>0$, 
\[
\int_{0}^{\varepsilon}|f(u)|^p\,du>0.
\]
\end{theorem}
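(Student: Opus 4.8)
The plan is to prove both implications through the duality between $L^p(0,1)$ and $L^{p'}(0,1)$, which is available for all $1\less p<\infty$, and to reduce the non-trivial direction to Corollary~\ref{Cor:titch}.

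For the necessity (if $f$ is cyclic then $\ell_f=0$), I would argue by contraposition. Suppose $\ell_f>0$, so that $f=0$ almost everywhere on $(0,\ell_f)$. Since $V_\xi^n=V_{n\xi}$ and the value $V_{n\xi}f(x)=\frac{1}{\Gamma(n\xi)}\int_0^x f(u)(x-u)^{n\xi-1}\,du$ depends only on the restriction of $f$ to $(0,x)$, each iterate $V_\xi^n f$ again vanishes almost everywhere on $(0,\ell_f)$. Hence the whole orbit, and therefore $\overline{\text{span}}(V_\xi^n f:n\gtr 0)$, is contained in the proper closed subspace $\ind_{[\ell_f,1]}L^p(0,1)$, so $f$ cannot be cyclic.

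For the sufficiency, assume $\ell_f=0$. By Hahn--Banach, it suffices to show that the only $g\in L^{p'}(0,1)$ annihilating the orbit is $g=0$. Such a $g$ satisfies (after replacing $g$ by $\overline g$ if one uses the sesquilinear pairing) $\langle V_\xi^n f,g\rangle=0$ for every $n\gtr 1$; using $V_\xi^n=V_{n\xi}$ and $\Gamma(n\xi)\neq0$, this is exactly
\[
\int_0^1 g(x)\left(\int_0^x f(u)(x-u)^{n\xi-1}\,du\right)dx=0,\qquad n\gtr 1,
\]
which is precisely the hypothesis of Corollary~\ref{Cor:titch}. The corollary then produces $\alpha\in[0,1]$ with $f=0$ a.e. on $[0,\alpha]$ and $g=0$ a.e. on $[\alpha,1]$. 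The assumption $\ell_f=0$ forbids $f$ from vanishing on any interval $[0,\alpha]$ with $\alpha>0$, so $\alpha=0$ and therefore $g=0$ a.e. on $[0,1]$, which is what we wanted.

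The genuine analytic content has already been absorbed into Corollary~\ref{Cor:titch}, which bundles together the M\"untz completeness criterion and Titchmarsh's convolution theorem; consequently there is no serious obstacle left in the theorem itself. The only points requiring care are bookkeeping: recognizing that the orthogonality relations transcribe verbatim into the corollary's hypothesis (this is where the harmless factor $1/\Gamma(n\xi)$, nonvanishing for all $n$, is divided out), and checking that the definition of $\ell_f$ matches the output $\alpha$ of the corollary so that $\ell_f=0$ indeed forces $\alpha=0$.
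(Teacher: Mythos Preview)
Your proof is correct and follows essentially the same route as the paper: the non-cyclic direction by noting the orbit stays in $\ind_{[\ell_f,1]}L^p(0,1)$, and the cyclic direction via Hahn--Banach and Corollary~\ref{Cor:titch}, with $\ell_f=0$ forcing $\alpha=0$. The paper's argument is identical in structure and content.
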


\begin{proof}
Assume first that $\ell_f>0$. Then, $f=0$  a.e. on $[0,\ell_f]$, which immediately implies that $V_\xi^nf=V_{n\xi}f=0$ on $[0,\ell_f]$. In particular, we have that $\mbox{{\rm span}}(V_\xi^n f:n\gtr 0)\subset \ind_{[\ell_f,1]}L^p(0,1)$, and $f$ is not a cyclic vector for $V_\xi$. 

Conversely, assume that $\ell_f=0$. We shall show that any $\phi\in (L^p(0,1))^*$ which satisfies $\phi(V_\xi^nf)=0$, $n\gtr 0$, is equal to zero. By the Riesz representation theorem, there exists $g\in L^{p'}(0,1)$ such that 
\[
\phi(h)=\int_0^1 h(x)g(x)\,dx, \qquad h\in L^p(0,1).
\]
Hence, since $V_\xi^n=V_{n\xi}$, for every $n\gtr 1$, we have 
\[
\int_0^1 g(x)\left(\int_0^xf(u)(x-u)^{n\xi-1}\,du\right)\,dx=0.
\]
Now, Corollary~\ref{Cor:titch} implies that there exists $\alpha\in [0,1]$ such that $f(u)=0$ for almost all $u\in [0,\alpha]$ and $g(u)=0$ for almost all $u\in [\alpha,1]$. But, since $\ell_f=0$, necessarily $\alpha=0$ and $g(u)=0$ for almost all $u\in [0,1]$. Hence $\phi=0$. By the Hahn--Banach theorem, the subspace $\mbox{{\rm span}}(V_\xi^n f:n\gtr 0)$ is dense in $L^p(0,1)$, which implies that $f$ is cyclic for $V_\xi$ in $L^p(0,1)$. 
\end{proof}

We will deduce from Theorem~\ref{thm:cyclic} the description of the invariant subspaces of $V_\xi$.

\begin{theorem}
Let $E_{\ell,p}=\ind_{[\ell,1]}L^p(0,1)$, for $0\less \ell\less 1$, $1\less p<\infty$ and let $\xi\in\mathbb C_0$. 
\begin{enumerate}[(i)]
\item The closed invariant subspaces of $V_\xi$ on $L^p(0,1)$ are exactly the subspaces $E_{\ell,p}$, $0\less \ell\less 1$. In particular, the operator $V_\xi$ is unicellular.
\item The closed hyperinvariant subspaces of $V_\xi$ on $L^p(0,1)$ are exactly the subspaces $E_{\ell,p}$, $0\less \ell\less 1$.
\end{enumerate}
\end{theorem}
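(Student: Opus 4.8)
The plan is to treat both statements through one preliminary lemma: for every $f\in L^p(0,1)$, the cyclic subspace it generates is $\overline{\mathrm{span}}(V_\xi^n f:n\gtr 0)=E_{\ell_f,p}$. The inclusion $\subseteq$ is immediate, since $V_\xi^n f=V_{n\xi}f=\phi_{n\xi}\ast f$ still vanishes a.e. on $[0,\ell_f]$. For $\supseteq$ I would argue by duality exactly as in Theorem~\ref{thm:cyclic}: if $g\in L^{p'}(0,1)$ annihilates every $V_\xi^n f$, then Corollary~\ref{Cor:titch} produces $\alpha$ with $f=0$ a.e. on $[0,\alpha]$ and $g=0$ a.e. on $[\alpha,1]$; since $\ell_f$ is maximal, $\alpha\less \ell_f$, so $g=0$ a.e. on $[\ell_f,1]$, i.e. $g\in E_{\ell_f,p}^{\perp}$. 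Thus $(\mathrm{span})^{\perp}\subseteq E_{\ell_f,p}^{\perp}$, and the bipolar theorem gives equality. This is really Theorem~\ref{thm:cyclic} localized at the level $\ell_f$.

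For part (i), each $E_{\ell,p}$ is a closed subspace, and it is $V_\xi$-invariant because for $f$ vanishing on $[0,\ell]$ the integral defining $V_\xi f(x)$ runs over $[0,x]\subseteq[0,\ell]$ when $x\less \ell$. Conversely, let $M\neq\{0\}$ be closed and invariant and set $\ell_M=\inf\{\ell_f:f\in M\setminus\{0\}\}$. Every $f\in M$ has $\ell_f\gtr \ell_M$, hence vanishes on $[0,\ell_M]$, so $M\subseteq E_{\ell_M,p}$. For the reverse inclusion, choose $f_k\in M$ with $\ell_{f_k}\to \ell_M$; the lemma together with invariance gives $E_{\ell_{f_k},p}=\overline{\mathrm{span}}(V_\xi^n f_k)\subseteq M$, whence $E_{\ell_M+\delta,p}\subseteq M$ for every $\delta>0$, and since $\bigcup_{\delta>0}E_{\ell_M+\delta,p}$ is dense in $E_{\ell_M,p}$ (truncate and apply dominated convergence) we get $E_{\ell_M,p}\subseteq M$. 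As the family $\{E_{\ell,p}:0\less\ell\less1\}$ is totally ordered by inclusion, unicellularity follows at once.

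For part (ii), every hyperinvariant subspace is in particular invariant (as $V_\xi$ commutes with itself), so by (i) it must be some $E_{\ell,p}$; it remains to show each $E_{\ell,p}$ is preserved by every $T\in\mathcal B(L^p(0,1))$ commuting with $V_\xi$. Writing $f_\ell=\ind_{[\ell,1]}$, which generates $E_{\ell,p}$ by the lemma, continuity and commutation yield $T(E_{\ell,p})=T\,\overline{\mathrm{span}}(V_\xi^n f_\ell)\subseteq\overline{\mathrm{span}}(V_\xi^n Tf_\ell)=E_{\ell_{Tf_\ell},p}$. Hence the whole matter reduces to the inequality $\ell_{Tf_\ell}\gtr \ell$, or more symmetrically to $\ell_{Tf}\gtr \ell_f$ for all $f$: \emph{an operator commuting with $V_\xi$ cannot push mass to the left of the support}.

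This last inequality is the heart of the argument and the step I expect to be the main obstacle; by contrast everything above is soft. Duality offers no shortcut, since $T^{\ast}$ commutes with $V_\xi^{\ast}=RV_{\bar\xi}R$ (with $R$ the reflection $x\mapsto 1-x$) and the corresponding claim for $V_\xi^{\ast}$ is precisely the mirror image of the claim for $V_\xi$. The route I would take is to identify the commutant of $V_\xi$ with the algebra of truncated convolution operators: using that $\mathbf 1$ is cyclic and that $V_\xi^n\mathbf 1$ is a scalar multiple of $x^{n\xi}$, the operator $T$ is determined on the dense span of $\{x^{n\xi}\}$ by the single function $T\mathbf 1$, and commutation with $V_\xi=\phi_\xi\ast(\cdot)$ forces $T$ to act as convolution by a fixed kernel $\nu$ (an $L^1$ function, or a measure, on $[0,1]$). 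Once $Tf=\nu\ast f$ is established, the Titchmarsh convolution theorem (Theorem~\ref{thm:titchmarsh}) gives $\inf\mathrm{supp}(Tf)=\inf\mathrm{supp}(\nu)+\inf\mathrm{supp}(f)\gtr \inf\mathrm{supp}(f)$, that is $\ell_{Tf}\gtr \ell_f$, which completes the proof. The delicate points are the $L^p$-boundedness bookkeeping in this convolution representation and the case where the commutant element is a genuine measure rather than a function; these are exactly where the classical Gohberg--Krein analysis has to be adapted from $L^2(0,1)$ to $L^p(0,1)$.
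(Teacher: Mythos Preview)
Your treatment of part~(i) is correct and close in spirit to the paper's, though arguably a bit cleaner: where the paper introduces an explicit diffeomorphism $\varphi:[0,1]\to[\lambda,1]$ to reduce to the cyclic-vector case $\ell_f=0$ (Theorem~\ref{thm:cyclic}), you prove the localized statement $\overline{\mathrm{span}}(V_\xi^n f)=E_{\ell_f,p}$ directly by duality and Corollary~\ref{Cor:titch}. Both routes rest on the same Titchmarsh/M\"untz machinery and lead to the same infimum argument $\ell_M=\inf\{\ell_f:f\in M\}$.

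For part~(ii), however, you have taken a much harder road than necessary and left a real gap. Identifying the full commutant of $V_\xi$ on $L^p(0,1)$ with a class of truncated convolution operators is a substantial result in its own right; on $L^2$ this is classical for $V=V_1$, but for general $\xi$ and general $p$ the ``$L^p$-boundedness bookkeeping'' and the measure-versus-function issue you flag are not minor technicalities---they are the whole problem. Your sketch does not establish that every $T$ commuting with $V_\xi$ is of the form $f\mapsto\nu\ast f$, and without that representation the Titchmarsh step cannot fire.

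The paper bypasses all of this with a short abstract argument that uses only the unicellularity established in~(i). Given $A$ commuting with $V_\xi$ and $|\lambda|>\|A\|$, the subspace $(\lambda I-A)(E_{\ell,p})$ is closed (since $\lambda I-A$ is invertible) and $V_\xi$-invariant, hence by total ordering is comparable to $E_{\ell,p}$. If $(\lambda I-A)(E_{\ell,p})\subset E_{\ell,p}$ for some such $\lambda$, one is done immediately. Otherwise $(\lambda I-A)^{-1}(E_{\ell,p})\subset E_{\ell,p}$ for all large $\lambda$, and expanding $(\lambda I-A)^{-1}$ as a Neumann series shows that $-\lambda f+\lambda^2(\lambda I-A)^{-1}f\to Af$ in $L^p$ as $|\lambda|\to\infty$, forcing $Af\in E_{\ell,p}$. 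This is in fact a general principle: for any unicellular operator, every invariant subspace is automatically hyperinvariant. No knowledge of the commutant is required.
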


\begin{proof}
$(i)$ We follow the ideas contained in the proof of Theorem~4.6 in \cite{ShapiVol}.

First, it is easily seen that each of the subspaces $E_{\ell,p}$, $0\less \ell\less 1$, is closed and invariant with respect to $V_\xi$ on $L^p(0,1)$. Conversely, let $E$ be a proper closed invariant subspace of $V_\xi$ on $L^p(0,1)$. Let $a=\inf\{\ell_f:f\in E\}$. Since $E\neq\{0\}$, we have $0\leq a<1$, and clearly $E\subset E_{a,p}$. Let us prove that we have indeed equality. 

For every $\alpha\in (a,1]$, there exists $f\in E$ such that $a\leq \ell_f<\alpha\leq 1$. Denote by $\lambda=\ell_f$. Then $f\in E_{\lambda,p}$. We introduce the following $C^1$-diffeomorphism 
\[
\begin{array}{cccl}
\varphi:&[0,1]&\longrightarrow&[\lambda,1]\\
&t&\longmapsto&(1-t)\lambda+t=(1-\lambda)t+\lambda,
\end{array}
\]
and the space 
\[
\widetilde{E}=\{g\circ \varphi:g\in E\cap E_{\lambda,p}\}.
\]
Observe that for every $g\in E_{\lambda,p}$, we have 
\[
\|g\circ\varphi\|_p^p=\frac{1}{1-\lambda}\int_{\lambda}^1 |g(\theta)|^p\,d\theta=\frac{1}{1-\lambda}\int_0^1 |g(\theta)|^p\,d\theta=\frac{1}{1-\lambda}\|g\|_p^p.
\]
In particular, we deduce that $\widetilde{E}$ is a closed subspace of $L^p(0,1)$. We also denote by $F=f\circ \varphi$. Since $f\in E\cap E_{\lambda,p}$, we get that $F\in\widetilde{E}$. Moreover, $V_\xi(\widetilde{E})\subset \widetilde{E}$. Indeed, let $g\in E\cap E_{\lambda,p}$. Then, for $x\in [0,1]$, we have
\[
\begin{aligned}
V_\xi(g\circ\varphi)(x)=&\frac{1}{\Gamma(\xi)}\int_0^x (g\circ\varphi)(u)(x-u)^{\xi-1}\,du\\
=&\frac{1}{\Gamma(\xi)}\int_\lambda^{\varphi(x)}g(\theta)\left(x-\frac{\theta-\lambda}{1-\lambda}\right)^{\xi-1}\frac{1}{1-\lambda}\,d\theta\\
=&\frac{(1-\lambda)^{-\xi}}{\Gamma(\xi)}\int_\lambda^{\varphi(x)}g(\theta)(\varphi(x)-\theta)^{\xi-1}\,d\theta,
\end{aligned}
\]
where we make the change of variable $\theta=\varphi(u)$. Since $g\in E_{\lambda,p}$, we have $g=0$ a.e. on $(0,\lambda)$, whence
\[
V_\xi(g\circ\varphi)(x)=\frac{(1-\lambda)^{-\xi}}{\Gamma(\xi)}\int_0^{\varphi(x)}g(\theta)(\varphi(x)-\theta)^{\xi-1}\,d\theta=(1-\lambda)^{-\xi}(V_\xi(g))(\varphi(x)).
\]
In other words, $V_\xi(g\circ\varphi)=(1-\lambda)^{-\xi}(V_\xi(g))\circ\varphi$. Taking account of the fact that $E\cap E_{\lambda,p}$ is invariant with respect to $V_\xi$, we deduce that $V_\xi(g)\circ \varphi\in \widetilde{E}$, and thus $V_\xi(\widetilde{E})\subset \widetilde{E}$.

Observe now that $\ell_F=0$. Indeed, for every $\varepsilon\in (0,1)$, we have 
\[
\int_0^{\varepsilon}|F(u)|^p\,du=\int_0^{\varepsilon}|f(\varphi(u))|^p\,du=\frac{1}{1-\lambda}\int_\lambda^{\varphi(\varepsilon)}|f(\theta)|^p\,d\theta,
\]
and $\varphi(\varepsilon)=(1-\lambda)\varepsilon+\lambda>\lambda=\ell_f$. Thus, by definition of $\ell_f$, we get 
\[
\int_0^\varepsilon |F(u)|^p\,du>0\qquad\text{for every }\varepsilon\in (0,1),
\]
which gives $\ell_F=0$. Therefore, according to Theorem~\ref{thm:cyclic}, $F$ is cyclic for $V_\xi$, meaning that $\text{span}(V_\xi^n F:n\geq 0)$ is dense in $L^p(0,1)$. But, since $F\in\widetilde{E}$ and $\widetilde{E}$ is invariant with respect to $V_\xi$, observe that we have $\text{span}(V_\xi^n F:n\geq 0)\subset \widetilde{E}$. Thus we deduce that $\widetilde{E}$ is dense in $L^p(0,1)$. But we have already observed that $\widetilde{E}$ is also closed, and finally we can conclude that $\widetilde{E}=L^p(0,1)$. 

Take now $h\in E_{\lambda,p}$. Then $h\circ\varphi\in L^p(0,1)=\widetilde{E}$, and there exists $g\in E\cap E_{\lambda,p}$ such that $h\circ\varphi=g\circ\varphi$. Since $h$ and $g$ vanish a.e. on $(0,\lambda)$, we deduce that $h=g\in E\cap E_{\lambda,p}$. Hence $E_{\lambda,p}\cap E=E_{\lambda,p}$, that is,  $E_{\alpha,p}\subset E_{\lambda,p}\subset E$. Thus 
\[
\bigcup_{\alpha\in (a,1]}E_{\alpha,p}\subset E\subset E_{a,p}.
\]
Since $\bigcup_{\alpha\in (a,1]}E_{\alpha,p}$ is dense in $E_{a,p}$, we get that $E=E_{a,p}$ as required. 

%Using Theorem~\ref{thm:cyclic}, we have $\ell_f>0$ for every $f\in E$. According to Lemma~\ref{lem:technical-elln}, $\ell:=\inf_{f\in E}\ell_f>0$ and there exists $f_0\in E$ such that $\ell=\ell_{f_0}$. Observe that $E\subset E_{\ell,p}$. Let us prove that we have indeed equality. 
%For that purpose, we shall use the Hahn-Banach theorem. 
%Let $\psi\in (E_{\ell,p})^*$ such that $\psi_{|E} \equiv 0$. 
%We have to prove that $\psi$ is zero. First, extend the linear functional $\psi$ on $L^p(0,1)$ and use the Riesz representation theorem to get that there exists $g\in L^{p'}(0,1)$ such that 
%\[
%\psi(h)=\int_0^1 g(x)h(x)\,dx,\qquad h\in L^p(0,1).
%\]
%Since $f_0\in E$ and $E$ is invariant with respect to $V_\xi$, we have for every $n\gtr 1$, 
%\[
%\begin{aligned}
%0&=\psi(V_\xi^n f_0)=\psi(V_{n\xi}f_0)\\
%&=\int_0^1 g(x)V_{n\xi}f_0(x)\,dx\\
%&=\int_0^1 g(x)\left(\int_0^x f_0(s)(x-s)^{n\xi-1}\,ds\right)\,dx.
%\end{aligned}
%\]
%According to Corollary~\ref{Cor:titch}, there exists $\alpha\in [0,1]$ such that $f_0(s)=0$ for almost all $s\in [0,\alpha]$ and $g(s)=0$ for almost all $s\in [\alpha,1]$. We thus have $\alpha\less \ell_{f_0}=\ell$, which implies that $g(s)=0$ for almost all $s\in [\ell,1]$. In particular, for every $h\in E_{\ell,p}$, we get
%\[
%\psi(h)=\int_0^1 g(x)h(x)\,dx=\int_0^\ell g(x)h(x)\,dx=0.
%\]
%Hence $\psi=0$ on $E_{\ell,p}$ and by the Hahn-Banach theorem, we deduce that $E=E_{\ell,p}$.

$(ii)$ The only thing to prove is that $E_{\ell,p}$, $0<\ell<1$, is a hyperinvariant subspace for $V_\xi$. Let $A\in\mathcal B(L^p(0,1))$ such that $AV_\xi=V_\xi A$. 
We shall prove that $A(E_{\ell,p})\subset E_{\ell,p}$. Let $\lambda\in\mathbb C$ such that $|\lambda|>\|A\|$. 
Observe that $(\lambda I-A)(E_{\ell,p})$ is an invariant subspace for $V_\xi$ and it is also closed because $\lambda I-A$ is invertible. 
Hence by unicellularity of $V_\xi$, we have either $(\lambda I-A)(E_{\ell,p})\subset E_{\ell,p}$ or $E_{\ell,p}\subset (\lambda I-A)(E_{\ell,p})$. 
In the first case, we immediately get that $A(E_{\ell,p})\subset E_{\ell,p}$, and we are done. 
So we may assume that for every $\lambda\in\mathbb C$ with $|\lambda|>\|A\|$, we have $E_{\ell,p}\subset (\lambda I-A)(E_{\ell,p})$. 
Hence $(\lambda I-A)^{-1}(E_{\ell,p})\subset E_{\ell,p}$.
 Fix $f\in E_{\ell,p}$. 
 For every $\lambda\in\mathbb C$ with $|\lambda|>\|A\|$, define $f_\lambda=(\lambda I-A)^{-1}f$. 
Then $f_\lambda \in E_{\ell,p}$, and we can rewrite the last relation as 
\[
\lambda^{-1}f+\lambda^{-2}Af+\sum_{n=2}^\infty \lambda^{-n-1}A^nf=f_\lambda.
\] 
Multiplying by $\lambda^2$ gives 
\[
Af+\lambda f-\lambda^2f_\lambda=-\sum_{n=2}^\infty \lambda^{-n+1}A^nf.
\]
Hence
\[
\begin{aligned}
\|Af+\lambda f-\lambda^2f_\lambda\|_p&\less \sum_{n=2}^\infty |\lambda|^{-n+1}\|A\|^n\|f\|_p\\
&= \frac{|\lambda|^{-1}\|A\|^2}{1-|\lambda|^{-1}\|A\|} \|f\|_p.
\end{aligned}
\]
In particular, when $|\lambda|\to\infty$, we get that $-\lambda f+\lambda^2f_\lambda\to Af$ in $L^p(0,1)$. Since $-\lambda f+\lambda^2f_\lambda$ belongs to $E_{\ell,p}$ and $E_{\ell,p}$ is closed, we deduce that $Af\in E_{\ell,p}$. Hence $A(E_{\ell,p})\subset E_{\ell,p}$, which concludes the proof. 
\end{proof}

%%%%%%%%%%%%%%%%%%%%%%%%%%%%%%%%%%%%%%%%%%%%%%%%%%%%%%%%%%%%%%%%%%%%%%%%%%%%%%%%%%%%%%%%%%%%%%%%
\section{ Membership to ideals of operators}

We introduce the following notation for the duality $L^p-L^{p'}$ when $1\less p<\infty$. 
For $f\in L^p(0,1)$ and $g\in L^{p'}(0,1)$, 
\[
\langle f,g\rangle_{L^p,L^{p'}}=\int_0^1 f(x)\overline{g(x)}\,dx.
\]
For $n\in\mathbb Z$, we also denote by $e_n(x)=\e^{2i\pi nx}$, $x\in (0,1)$. Note also that, as usual, we identify $L^1(0,1)$ and $L^1(\mathbb T)$, and for $f\in L^1(0,1)$, we have
\[
\widehat{f}(n)=\langle f,e_n\rangle_{L^1,L^\infty}=\int_{0}^1 f(x)\e^{-2i\pi nx}\,dx, \qquad n\in\mathbb Z.
\]
The following technical lemma will be the key point to study the membership of the Riemann-Liouville semigroup to various classes of ideals of operators. 

\begin{lemma}\label{lem:schatten}
Let $1\less p<\infty$, and  let $\xi$ such that $0<\Re(\xi)\leq 1$. Then
\begin{equation}\label{eq:equivalent-cle}
\langle V_\xi e_n,e_n\rangle_{L^p,L^{p'}} \sim\frac{1}{(2i \pi n)^\xi},\qquad\mbox{as }n\to\infty.
\end{equation}
\end{lemma}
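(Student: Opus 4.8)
The plan is first to reduce the bilinear form to a single oscillatory integral. Writing $V_\xi e_n=e_n\ast\phi_\xi$ with $\phi_\xi(u)=\frac{1}{\Gamma(\xi)}u^{\xi-1}$ (as recalled in Section~\ref{section-semi-group}), performing the change of variable $v=x-u$ in the inner integral and then applying Fubini's theorem on the triangle $\{0<v<x<1\}$, the oscillatory factors $\e^{\pm 2i\pi nx}$ cancel and one is left with
\begin{equation*}
\langle V_\xi e_n,e_n\rangle_{L^p,L^{p'}}=\frac{1}{\Gamma(\xi)}\int_0^1 \e^{-2i\pi nv}\,v^{\xi-1}(1-v)\,dv.
\end{equation*}
Since no reference to $p$ survives, this already explains why the statement is independent of $p$, and it suffices to show that the integral $I_n$ on the right is equivalent to $\Gamma(\xi)(2i\pi n)^{-\xi}$.

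Next I would split $v^{\xi-1}(1-v)=v^{\xi-1}-v^{\xi}$ and set $J_n(\alpha)=\int_0^1 \e^{-2i\pi nv}v^{\alpha-1}\,dv$, so that $I_n=J_n(\xi)-J_n(\xi+1)$. The heart of the matter is the asymptotics of $J_n(\xi)$, which carries the singular contribution of $v^{\xi-1}$ at the origin. I would extend the integral to the half-line using the elementary identity $\int_0^\infty \e^{-sv}v^{\xi-1}\,dv=\Gamma(\xi)s^{-\xi}$ for $\Re(s)>0$, evaluated at $s=2i\pi n$: for $0<\Re(\xi)<1$ this is legitimate by rotating the contour from the positive real axis to the imaginary axis (the arc at infinity is harmless since $v^{\xi-1}\to 0$), while for $\Re(\xi)=1$ one first inserts a convergence factor $\e^{-\eps v}$ and lets $\eps\to 0^+$. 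A single integration by parts on the tail $\int_1^\infty \e^{-2i\pi nv}v^{\xi-1}\,dv$, whose boundary term at $v=1$ equals $\frac{1}{2i\pi n}$ because $\e^{-2i\pi n}=1$, then yields
\begin{equation*}
J_n(\xi)=\frac{\Gamma(\xi)}{(2i\pi n)^{\xi}}-\frac{1}{2i\pi n}+o\!\left(\frac1n\right),\qquad n\to\infty.
\end{equation*}

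I would then integrate by parts in $J_n(\xi+1)=\int_0^1 \e^{-2i\pi nv}v^{\xi}\,dv$, whose integrand is now continuous and vanishes at $0$, to get $J_n(\xi+1)=-\frac{1}{2i\pi n}+\frac{\xi}{2i\pi n}J_n(\xi)=-\frac{1}{2i\pi n}+o(1/n)$, using $J_n(\xi)=O(n^{-\Re(\xi)})$. Subtracting, the two spurious $\frac{1}{2i\pi n}$ contributions cancel and
\begin{equation*}
I_n=J_n(\xi)-J_n(\xi+1)=\frac{\Gamma(\xi)}{(2i\pi n)^{\xi}}+o\!\left(\frac1n\right).
\end{equation*}
Dividing by $\Gamma(\xi)\neq 0$ gives the claim, provided the error is negligible; for $0<\Re(\xi)<1$ this is automatic because $|(2i\pi n)^{-\xi}|\asymp n^{-\Re(\xi)}$ dominates $n^{-1}$.

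The main obstacle is precisely the endpoint $\Re(\xi)=1$. There the leading term $\Gamma(\xi)(2i\pi n)^{-\xi}$ is itself of size $n^{-1}$, exactly the order of every error term above, so the result cannot follow from crude bounds and hinges on the exact cancellation of the $\frac{1}{2i\pi n}$ terms—a cancellation produced by the factor $(1-v)$ in the kernel, i.e. by passing from $J_n(\xi)$ to the difference $J_n(\xi)-J_n(\xi+1)$. Keeping track of this next-order term, and justifying the half-line extension where the improper integral converges only in the Abel sense ($v^{\xi-1}=v^{it}$ no longer being of bounded variation at infinity), is the delicate part; as a consistency check, for $\xi=1$ one verifies directly that $\langle V_1 e_n,e_n\rangle=\frac{1}{2i\pi n}$.
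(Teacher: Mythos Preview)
Your proposal is correct and follows the same overall strategy as the paper: reduce the pairing to $\widehat{\psi_{\xi-1}}(n)-\widehat{\psi_\xi}(n)$ (your $J_n(\xi)-J_n(\xi+1)$), extract from each piece a leading term $\Gamma(\cdot)(2i\pi n)^{-(\cdot)}$ together with a secondary $-\frac{1}{2i\pi n}$, and observe that the secondary terms cancel in the difference---this cancellation being the whole point at the endpoint $\Re(\xi)=1$.

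The execution differs. The paper rescales to $\int_0^n u^\alpha\e^{-2i\pi u}\,du$ and rotates the contour over a \emph{finite} quarter-disc of radius $n$; the leading term comes from the segment on the imaginary axis, while the arc is shown by a dominated-convergence argument to contribute exactly $\frac{in^\alpha}{2\pi}(1+o(1))$, which after normalisation gives the $\frac{i}{2\pi n}$ subleading term. You instead push the integral to $[0,\infty)$ (via contour rotation or Abel regularisation) to obtain the exact value $\Gamma(\xi)(2i\pi n)^{-\xi}$, and then recover the subleading $\frac{1}{2i\pi n}$ as the boundary term in an integration by parts on the tail $[1,\infty)$. Your route avoids the somewhat delicate arc estimate, at the cost of the Abel limiting argument when $\Re(\xi)=1$; the paper's route stays on compact contours but pays with the dominated-convergence computation. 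One minor quibble: ``a single integration by parts'' on the tail yields a remainder $\frac{\xi-1}{2i\pi n}\int_1^\infty\e^{-2i\pi nv}v^{\xi-2}\,dv$, which is $o(1/n)$ only after invoking Riemann--Lebesgue (when $\Re(\xi)<1$) or a second integration by parts (when $\Re(\xi)=1$); this is routine but worth saying explicitly.
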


\begin{proof}
Using the change of variable $s=x-\theta$ in the inner integral and the Fubini theorem, we have
\begin{eqnarray}\label{eq:sdsqdsqsdq9999}
\Gamma(\xi)\langle V_\xi(e_n),e_{n}\rangle_{L^p,L^{p'}}&=&\int_0^1\e^{-2i\pi nx}\left(\int_0^x\e^{2i\pi n\theta}(x-\theta)^{\xi-1}\,d\theta\right)dx\notag\\
&=&\int_0^1(1-s)\e^{-2i\pi ns}s^{\xi-1}\;ds\notag\\
&=&\widehat{\psi_{\xi-1}}(n)-\widehat{\psi_{\xi}}(n),
\end{eqnarray}
where $\psi_\alpha(s)=s^\alpha$, $s\in (0,1)$, with $\Re(\alpha)\in(-1,1]$. Observe that, for every $n\gtr1$, we have,
\begin{equation}\label{eq:3SDSDSDSD2}
n^{\alpha+1}\widehat{\psi_{\alpha}}(n)=\int_0^nu^\alpha\e^{-2i\pi u}\;du.
\end{equation}
We apply now the Cauchy Formula to the function $$z\mapsto z^\alpha\exp(-2i\pi z)=\exp(\alpha\log(z)-2i\pi z),$$ where $\log$ is associated to an argument  taking values in $(-\pi,\pi)$, over the boundary of the sector-shaped region in the complex plane formed by $[0,-in]$ along the negative imaginary axis, a circular arc of radius $n$ centered at the origin, and $[n,0]$ along the positive real axis. 
Actually we should be careful with the origin but with an obvious limit argument in $0$, we get 
\begin{equation}\label{ssdsdsd12123242}
\int_0^nu^\alpha\e^{-2i\pi u}\;du=-i\e^{-i\frac{\pi\alpha}{2}}\int_0^nu^\alpha\e^{-2\pi u}\;du+I_n,
\end{equation}
where 
\[
I_n=\int_{-\pi/2}^0 n^{\alpha}\e^{i\alpha t}\e^{-2i\pi n\e^{it}}in\e^{it}\,dt=i n^{\alpha+1}\int_{-\pi/2}^0 \e^{i\alpha t-2i\pi n \e^{it}+it}\,dt.
\]
Let us show that
\begin{equation}\label{ssdsdsd12123242fkdfdf}
I_n\sim \frac{in^\alpha}{2\pi},\qquad\mbox{as }n\to\infty.
\end{equation}
By the change of variable $x=-nt$, we have
\[
I_n=in^{\alpha}\int_0^{n\frac{\pi}{2}}\e^{-i\alpha\frac{x}{n}} \e^{-2i\pi n\cos(\frac{x}{n})}\e^{-2\pi n \sin(\frac{x}{n})}\e^{-i\frac{x}{n}}\,dx.
\]
Using that $\e^{2i\pi n}=1$, we can write that $\e^{-2i\pi n\cos(\frac{x}{n})}=\e^{4i\pi n\sin^2(\frac{x}{2n})}$, which implies that 
\[
\frac{1}{i n^\alpha}I_n= \int_0^{\infty} f_n(x),dx.
\] 
with $f_n(x)=\e^{-i\alpha\frac{x}{n}}\e^{4i\pi n\sin^2(\frac{x}{2n})}\e^{-2\pi n \sin(\frac{x}{n})}\e^{-i\frac{x}{n}}\ind_{(0,n\frac{\pi}{2})}(x)$, $x>0$. Observe that for every $x>0$, we have 
\[
f_n(x)\to \e^{-2\pi x}\qquad \mbox{as }n\to \infty.
\]
Moreover, we have
\[
|f_n(x)|=\e^{-2\pi n\sin(\frac{x}{n})}\e^{\Im(\alpha)\frac{x}{n}}\ind_{(0,n\frac{\pi}{2})}(x).
\]
But for $x\in (0,n\frac{\pi}{2})$, we have
\[
\e^{\Im(\alpha)\frac{x}{n}} \less \max(1,\e^{\Im(\alpha)\frac{\pi}{2}}),
\]
and since $\sin(\frac{x}{n})\gtr \frac{2}{\pi}\frac{x}{n}$, we deduce that 
\[
|f_n(x)|\less f(x):=\max(1,\e^{\Im(\alpha)\frac{\pi}{2}}) \e^{-4x},
\]
and $f$ belongs to $L^1((0,\infty))$. Therefore, we can apply the dominated Lebesgue convergence theorem, and we get 
that 
\[
\frac{1}{in^\alpha}I_n\to \int_0^\infty \e^{-2\pi x}\,dx=\frac{1}{2\pi},\qquad \mbox{as }n\to\infty,
\]
which finally gives \eqref{ssdsdsd12123242fkdfdf}.

According to \eqref{eq:3SDSDSDSD2}, \eqref{ssdsdsd12123242} and \eqref{ssdsdsd12123242fkdfdf}, we can write
\begin{align*}
n^{\alpha+1}\widehat{\psi_{\alpha}}(n)&=-i\e^{-i\frac{\pi\alpha}{2}}\int_0^\infty u^\alpha\e^{-2\pi u}\;du+o(1)+\frac{in^\alpha}{2\pi}+o(n^{\alpha})\\
&=\frac{\Gamma(\alpha+1)}{(2i\pi)^{\alpha+1}}+o(1)+\frac{in^\alpha}{2\pi}+o(n^{\alpha}),
\end{align*}
which implies that 
\begin{equation}\label{eq:coeff-Fourier-psi-alpha}
\widehat{\psi_{\alpha}}(n)= \frac{\Gamma(\alpha+1)}{(2i\pi n)^{\alpha+1}}+o\left(\frac{1}{n^{\alpha+1}}\right)+\frac{i}{2\pi n}+o\left(\frac{1}{n}\right).
\end{equation}
According to \eqref{eq:sdsqdsqsdq9999} and taking into account that $o(1/n)=o(1/n^{\xi})$ (since $0<\Re(\xi)\less 1$),  we thus have 
\begin{align*}
\Gamma(\xi)\langle V_\xi(e_n),e_{n}\rangle_{L^p,L^{p'}}&=\frac{\Gamma(\xi)}{(2i\pi n)^{\xi}}+o\left(\frac{1}{n^{\xi}}\right)+\frac{i}{2\pi n}+o\left(\frac{1}{n}\right)\\
&-\left(\frac{\Gamma(\xi+1)}{(2i\pi n)^{\xi+1}}+o\left(\frac{1}{n^{\xi+1}}\right)+\frac{i}{2\pi n}+o\left(\frac{1}{n}\right) \right)\\
&=\frac{\Gamma(\xi)}{(2i \pi n)^\xi}+o\left(\frac{1}{n^\xi}\right),
\end{align*}
which concludes the proof.
\end{proof}

\begin{rem}
\rm{Note that in the case when $\xi=1$, it is easy to check that \eqref{eq:equivalent-cle} is indeed an equality. Moreover, in the case when $0<\Re(\xi)<1$, we can also obtain \eqref{eq:equivalent-cle} more quickly. Indeed, simple estimates (using the concavity of the sinus on $(0,\pi/2)$) show that $I_n=O\left(n^{\Re(\alpha)}\right)$. This gives 
\begin{equation}\label{calculcrucial}
\widehat{\psi_{\alpha}}(n)=-i\frac{\e^{-i\frac{\pi\alpha}{2}}}{n^{1+\alpha}}\int_0^nu^\alpha\e^{-2\pi u}\;du+O\Big(\frac{1}{n}\Big)\;\cdot 
\end{equation}
Using that $\Re(\xi)<1$, we thus obtain that 
\begin{align*}
\Gamma(\xi)\langle V_\xi(e_n),e_{n}\rangle_{L^p,L^{p'}}&=\widehat{\psi_{\xi-1}}(n)-\widehat{\psi_{\xi}}(n)\\
&=\frac{\e^{-i\frac{\pi \xi}{2}}}{n^\xi}\int_0^nu^{\xi-1}\e^{-2\pi u}\;du+O\Big(\frac{1}{n}\Big)+O\Big(\frac{1}{n^{\xi+1}}\Big)\\
&=\frac{\e^{-i\frac{\pi \xi}{2}}}{(2\pi n)^\xi}\int_0^{2\pi n}u^{\xi-1}\e^{-u}\;du+o\Big(\frac{1}{n^{\xi}}\Big)\\
&\sim\frac{\Gamma(\xi)}{(2i \pi n)^\xi}.
\end{align*}}
\end{rem}

\subsection{Around compactness}

\begin{prop}\label{prop-Vxi-psumming-xisuperieur-1p}
Let $1\less p<\infty$ and let $\xi\in\C_{\frac{1}{p}}$. Then the operator $V_\xi$ is absolutely $p$-summing on $L^p(0,1)$. 
\end{prop}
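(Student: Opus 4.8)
The plan is to exploit the fact that, precisely when $\Re(\xi)>1/p$, the operator $V_\xi$ no longer maps merely into $L^p(0,1)$ but actually into the space of continuous functions, and then to factor $V_\xi$ through the canonical $p$-summing formal identity on $C([0,1])$. The whole content of the proposition is that the threshold $\Re(\xi)>1/p$ is exactly what is needed for the range of $V_\xi$ to land inside $C([0,1])$; once this is observed, the ideal structure of $\Pi_p$ does the rest.

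First I would invoke Proposition~\ref{bound}. For $p>1$ and $\xi\in\C_{1/p}$, part $(iv)$ guarantees that $V_\xi:L^p(0,1)\to C_0([0,1])$ is bounded; for $p=1$ (so that $1/p=1$), part $(v)$ gives the boundedness of $V_\xi:L^1(0,1)\to C_0([0,1])$ whenever $\Re(\xi)\gtr 1$, which covers the range $\Re(\xi)>1=1/p$. In either case, composing with the inclusion $C_0([0,1])\hookrightarrow C([0,1])$ produces a bounded operator $T:L^p(0,1)\to C([0,1])$ that agrees, as a map of functions, with $V_\xi$.

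Next I would write the operator $V_\xi$ acting on $L^p(0,1)$ as the composition $V_\xi=i_p\circ T$, where $i_p:C([0,1])\to L^p(0,1)$ is the formal identity and $L^p(0,1)=L^p(m)$ with $m$ the Lebesgue (hence probability) measure on $[0,1]$. By \eqref{Diestel-eq3}, $i_p$ is absolutely $p$-summing with $\pi_p(i_p)=1$. Since $\Pi_p$ is an operator ideal, the composition $i_p\circ T$ is absolutely $p$-summing, with $\pi_p(V_\xi)\less \pi_p(i_p)\|T\|=\|V_\xi\|_{\mathcal B(L^p,C_0)}$, which proves the claim.

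There is essentially no analytic obstacle here: the argument is a one-line factorization through the standard example \eqref{Diestel-eq3} of a $p$-summing operator, and the only point requiring a moment's care is the edge case $p=1$, where one must verify that Proposition~\ref{bound}$(v)$ indeed supplies the required mapping property into $C_0([0,1])$ on the relevant range of $\xi$.
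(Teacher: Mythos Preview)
Your proof is correct and follows exactly the same approach as the paper: factor $V_\xi$ through $C([0,1])$ using Proposition~\ref{bound}, then invoke the absolutely $p$-summing formal identity $i_p:C([0,1])\to L^p(0,1)$ from \eqref{Diestel-eq3} and the ideal property of $\Pi_p$. Your treatment is in fact slightly more careful than the paper's in distinguishing the cases $p>1$ (Proposition~\ref{bound}$(iv)$) and $p=1$ (Proposition~\ref{bound}$(v)$), but the argument is otherwise identical.
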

\begin{proof}
According to Proposition~\ref{bound}, for $\xi\in\C_{\frac{1}{p}}$, the operator $V_\xi$ factorizes through the formal identity $i_p$ from $C([0,1])$ to $L^p(0,1)$ with the following commutative diagram

\[
\xymatrix{
    L^p(0,1) \ar[r]^{V_\xi}\ar[d]_{V_\xi} & L^p(0,1) \\
    C([0,1]) \ar[ru]_{i_p} & 
  }
  \]
According to \eqref{Diestel-eq3}, we know that $i_p$ is absolutely $p$-summing, whence we get, by ideal property, that $V_\xi$ is also absolutely $p$-summing on $L^p(0,1)$. 
\end{proof}

\begin{rem}
\rm{
We deduce from Proposition~\ref{prop-Vxi-psumming-xisuperieur-1p} that for $\xi\in\C_{\frac{1}{p}}$, the operator  $V_\xi$ is weakly compact  and completely continuous on $L^p(0,1)$ (every absolutely $p$-summing operator is weakly compact and completely continuous). 
Now since $L^p(0,1)$ is reflexive for $1<p<\infty$, we obtain that $V_\xi$ is compact on $L^p(0,1)$, $1<p<\infty$ and $\xi\in\C_{\frac{1}{p}}$.  Nevertheless, this latter property occurs more often as showed in the following result. }
\end{rem}
Note that the following result is certainly known to experts. 
However, we provide here a very short proof.

\begin{prop}\label{prop:compacite-C_0}

For every $\xi\in\C_0$, $V_\xi$ is compact from $L^p(0,1)$ into itself.
It is also compact from $C([0,1])$ to $C_0([0,1])$.
%Let $X$ be $C([0,1])$ or $C_0([0,1])$ or $L^p(0,1)$ with $p\gtr 1$. For every $\xi\in\C_0$, $V_\xi$ is compact from $X$ into itself.
\end{prop}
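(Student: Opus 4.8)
The plan is to treat both statements uniformly through the convolution representation $V_\xi f=f\ast\phi_\xi$, where $\phi_\xi(u)=u^{\xi-1}/\Gamma(\xi)$ belongs to $L^1(0,1)$ for every $\xi\in\C_0$ (with $\|\phi_\xi\|_1=\frac{1}{\Re(\xi)|\Gamma(\xi)|}$), as recalled at the beginning of Section~\ref{section-semi-group}. The single analytic input I would use is the continuity of translation in $L^1(\R)$: extending $\phi_\xi$ by $0$ outside $(0,1)$ we have $\phi_\xi\in L^1(\R)$, and hence $\|\tau_{-h}\phi_\xi-\phi_\xi\|_{L^1(\R)}\to 0$ as $h\to 0$, where $\tau_{-h}\phi_\xi(y)=\phi_\xi(y+h)$. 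I stress that this holds for \emph{every} $\xi\in\C_0$ despite the singularity of $\phi_\xi$ at $0$, since translation is continuous in $L^1$ for any integrable function. I would then feed this estimate into the appropriate compactness criterion for each target space.

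For the $L^p(0,1)$ statement I would invoke the Kolmogorov--Riesz--Fr\'echet criterion. Extending every $f\in B_{L^p}$ by $0$ to $\R$, the functions $g=V_\xi f=f\ast\phi_\xi$ are supported in $[0,2]$ and satisfy $\|g\|_p\less\|\phi_\xi\|_1$, so the family $V_\xi(B_{L^p})$ is bounded and tightness is automatic on a bounded interval. For the $L^p$-equicontinuity, writing $g(\cdot+h)-g=f\ast(\tau_{-h}\phi_\xi-\phi_\xi)$ and applying Young's convolution inequality (with $q=1$, $r=p$) gives
\[
\|g(\cdot+h)-g\|_p\less\|f\|_p\,\|\tau_{-h}\phi_\xi-\phi_\xi\|_1\less\|\tau_{-h}\phi_\xi-\phi_\xi\|_1,
\]
which tends to $0$ uniformly in $f\in B_{L^p}$. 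Hence $V_\xi(B_{L^p})$ is relatively compact in $L^p(0,1)$, so $V_\xi$ is compact on $L^p(0,1)$ for every $1\less p<\infty$ and every $\xi\in\C_0$.

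For the operator from $C([0,1])$ to $C_0([0,1])$ I would argue by Arzel\`a--Ascoli. For $f\in B_{C([0,1])}\subset B_{L^\infty}$, the function $g=f\ast\phi_\xi$ is continuous with $g(0)=0$, so it lies in $C_0([0,1])$, and $\|g\|_\infty\less\|\phi_\xi\|_1$. The same translation estimate, now in sup-norm,
\[
|g(x+h)-g(x)|=\Big|\int f(u)\big(\phi_\xi(x+h-u)-\phi_\xi(x-u)\big)\,du\Big|\less\|f\|_\infty\,\|\tau_{-h}\phi_\xi-\phi_\xi\|_1,
\]
shows that $V_\xi(B_{C([0,1])})$ is uniformly bounded and equicontinuous, hence relatively compact in $C_0([0,1])$. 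Here it is essential to translate $\phi_\xi$ rather than $f$, since translation need not be continuous in $L^\infty$.

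The only point requiring care, and what I regard as the main obstacle, is the singular, one-sided kernel: one must translate the integrable factor $\phi_\xi$ (not $f$) and handle the extension by zero so that the one-sided support of the Volterra convolution is correctly accounted for; once this is arranged, the singularity at $0$ causes no difficulty because $\phi_\xi\in L^1$. An alternative route, valid only for $\Re(\xi)>\frac1p$ and $1<p<\infty$, would factor $V_\xi$ through the formal identity $C([0,1])\to L^p(0,1)$ as in the preceding remark and use reflexivity; but since this reaches neither small $\Re(\xi)$ nor the case $p=1$, the translation argument above is both shorter and fully general.
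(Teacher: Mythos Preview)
Your argument is correct and takes a genuinely different route from the paper's. The paper begins from the same key observation---that $V_\xi f=f\ast\phi_\xi$ with $\phi_\xi\in L^1(0,1)$---but then approximates the extended kernel $\psi$ (equal to $\phi_\xi$ on $(0,1)$ and $0$ on $[-1,0]$) by polynomials $Q_n$ in $L^1(-1,1)$, expands $Q_n(x-u)=\sum_j x^jP_{j,n}(u)$ to build explicit finite rank operators $K_n$, and shows $\|V_\xi-K_n\|\less\|\psi-Q_n\|_{L^1(-1,1)}$ on both $L^p(0,1)$ and $C([0,1])$. You instead feed the $L^1$-continuity of translation $\|\tau_{-h}\phi_\xi-\phi_\xi\|_1\to 0$ directly into Kolmogorov--Riesz--Fr\'echet (for $L^p$) and Arzel\`a--Ascoli (for $C$), obtaining uniform $L^p$- and sup-norm equicontinuity of $V_\xi(B_X)$ via Young's inequality.

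Both arguments are short and rest on the same analytic fact ($\phi_\xi\in L^1$); the paper's version is more constructive (it exhibits the approximating finite rank operators and gives a uniform rate), while yours is cleaner in that a single translation estimate handles both target spaces and avoids any algebra with the polynomial expansion. Your remark that one must translate $\phi_\xi$ rather than $f$ in the $C$-case is exactly the right point. The paper also notes a third alternative via factorization through $L^p$ for $p$ large (using that $V_{\xi/2}$ is then $p$-summing, hence weakly compact and Dunford--Pettis, so $V_\xi=V_{\xi/2}^2$ is compact), but this only directly handles the $C([0,1])$ case.
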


\begin{proof} Since $\phi_\xi\in L^1(0,1)$ (recall \eqref{phi}), the convolution operator by $\phi_\xi$, hence $V_\xi$, is compact.

Let us provide a quick selfcontained detailed argument in the case $X=L^p(0,1)$. Let $\psi:[-1,1]\to\C$ the function equal to $\phi_\xi$ on $(0,1)$ and $0$ on $[-1,0]$.
There exists a sequence of polynomials $(Q_n)_{n\gtr1}$ on $[-1,1]$ converging to $\psi$ in the space $L^1(-1,1)$.% to the function equal to $\phi_\xi$ on $(0,1)$ and $0$ on $[-1,0]$. 

For every $x,u\in[0,1]$, we can write $Q_n(x-u)=\sum_{j=0}^{d_n}x^jP_{j,n}(u)$ where $d_n\gtr1$ and $P_{j,n}$ are polynomials (knowing only that they are $L^{p'}$ functions would be sufficient).

Let $K_n(f)=\dis\sum_{j=0}^{d_n}X^j\dis\int_0^1 f(u)P_{j,n}(u)\,du$. It clearly defines a finite rank operator on $L^p(0,1)$.

On the other hand, for every $f\in L^p(0,1)$, we have, for a.e. $x\in(0,1)$,\vskip-10pt

\begin{align*}
(V_\xi(f)-K_n(f))(x)&=\int_0^1\big(\psi-Q_n)(x-u)f(u)\,du\\
&=\int_{x-1}^x\big(\psi-Q_n)(s)f(x-s)\,ds\\
&=\int_{-1}^1\big(\psi-Q_n)(s)\ind_{[s,s+1]}(x)F(x-s)\,ds\\
\end{align*}
where $F=f$ on $(0,1)$ and $0$ on $[-1,0]$. We get 
\[
\|V_\xi(f)-K_n(f)\|_p\less\|\psi-Q_n\|_{L^1(-1,1)}\|f\|_p.
\] 
Thus $\|V_\xi-K_n\|_{\mathcal B(L^p)}\less \|\psi-Q_n\|_{L^1(-1,1)}$, and $V_\xi$ is the limit of a sequence of finite rank operators on $L^p(0,1)$. Thus $V_\xi$ is compact on $L^p(0,1)$.

On $C([0,1])$ we could use Ascoli's theorem, but we prefer here a direct argument based on the first preceding integral expression above. Indeed, we have $\dis\|V_\xi(f)-K_n(f)\|_\infty\less\|\psi-Q_n\|_{L^1(-1,1)}\|f\|_\infty$ for every $f\in C([0,1])$.
We already know that $V_\xi(f)(0)=0$.
The sequence of finite rank operators defined by $\tilde K_n(f)=K_n(f)-K_n(f)(0)$ takes its values in $C_0([0,1])$.
Moreover, we have $\dis\|V_\xi-\tilde K_n\|\less2\|\psi-Q_n\|_{L^1(-1,1)}$ so $V_\xi$ is compact from $C([0,1])$ to $C_0([0,1])$.
\end{proof}
Point out that we can also prove immediatly the second part of Proposition \ref{prop:compacite-C_0} just using Proposition \ref{bound}.
Indeed choose $p$ large enough so that $\Re(\xi/2)>1/p$. Then $V_{\frac{\xi}{2}}$ viewed from $L^p(0,1)$ to $C_0([0,1])$ is bounded, a fortiori $V_{\frac{\xi}{2}}$ viewed from $C([0,1])$ to $C_0([0,1])$ is $p$-summing since it factorizes through the identity from $C([0,1])$ to $L^p(0,1)$. 
Therefore $V_{\frac{\xi}{2}}$ is both weakly compact and Dunford-Pettis hence $V_\xi=V_{\frac{\xi}{2}}^2$ is compact.\\

In the same spirit, we can now improve Proposition~\ref{bound}. 
\begin{corol}\label{cor:compact-sdqsdqsdqs2}
Let $1\less p\less \infty$ and let $\xi\in\C_{\frac{1}{p}}$. Then the operator $V_{\xi}:L^p(0,1)\to C_0([0,1])$ is compact.
\end{corol}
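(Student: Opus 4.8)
The plan is to exploit the semigroup law to factor $V_\xi$ through a compact operator, thereby upgrading the mere boundedness of Proposition~\ref{bound} to compactness. Since $\Re(\xi)>\frac{1}{p}$, I would fix a real number $\delta$ with $0<\delta<\Re(\xi)-\frac{1}{p}$ and split $\xi=\delta+\zeta$ where $\zeta=\xi-\delta$. Then $\delta\in\C_0$ and $\Re(\zeta)=\Re(\xi)-\delta>\frac{1}{p}$, so $\zeta\in\C_{\frac{1}{p}}$. By the semigroup property established in Proposition~\ref{prop:semigroup-RL}, we have $V_\xi=V_\zeta V_\delta$.

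Next I would invoke Proposition~\ref{prop:compacite-C_0} to conclude that $V_\delta:L^p(0,1)\to L^p(0,1)$ is compact, and Proposition~\ref{bound}$(iv)$ (valid for $p>1$) to conclude that $V_\zeta:L^p(0,1)\to C_0([0,1])$ is bounded, since $\zeta\in\C_{\frac{1}{p}}$. Viewing $V_\xi=V_\zeta\circ V_\delta$ as the composition $L^p(0,1)\xrightarrow{V_\delta}L^p(0,1)\xrightarrow{V_\zeta}C_0([0,1])$, it is the composition of a compact operator followed by a bounded one, hence compact. This settles the case $1<p<\infty$.

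For the borderline case $p=1$ (where $\frac{1}{p}=1$, so the hypothesis reads $\Re(\xi)>1$), the only adjustment is that Proposition~\ref{bound}$(iv)$ no longer applies; instead I would use Proposition~\ref{bound}$(v)$, which yields boundedness of $V_\zeta:L^1(0,1)\to C_0([0,1])$ as soon as $\Re(\zeta)\gtr 1$. Choosing $\delta\in(0,\Re(\xi)-1)$ still guarantees $\Re(\zeta)>1$, and the same factorization $V_\xi=V_\zeta\circ V_\delta$ with $V_\delta$ compact on $L^1(0,1)$ gives the conclusion.

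The argument is essentially immediate once the decomposition is in place; there is no genuine obstacle beyond verifying that the split can simultaneously respect both constraints $\delta>0$ and $\Re(\zeta)>\frac{1}{p}$, which is possible precisely because the strict inequality $\Re(\xi)>\frac{1}{p}$ leaves a positive gap to distribute between the two factors. The only point demanding a little care is the degenerate endpoint $p=1$, where one must switch from part $(iv)$ to part $(v)$ of Proposition~\ref{bound} to secure the boundedness of the second factor.
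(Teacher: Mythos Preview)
Your proof is correct and follows essentially the same approach as the paper's own proof: both split $\xi$ as a small real parameter plus a remainder still in $\C_{1/p}$, factor $V_\xi$ accordingly via the semigroup property, and combine the compactness of the first factor on $L^p$ (Proposition~\ref{prop:compacite-C_0}) with the boundedness of the second factor into $C_0([0,1])$ (Proposition~\ref{bound}). You are in fact slightly more explicit than the paper in singling out the case $p=1$ and invoking part~$(v)$ rather than~$(iv)$ of Proposition~\ref{bound}; the one case you do not mention is $p=\infty$, but there the same scheme works with part~$(vi)$ (or by factoring through $C_0([0,1])$ and using the second clause of Proposition~\ref{prop:compacite-C_0}).
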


\begin{proof}
Let $\eps>0$ such that $\Re(\xi)>\frac{1}{p}+\eps$ and let $\alpha:=\xi-\eps$. Then $\alpha\in\C_{\frac{1}{p}}$ and we have the following commutative diagram 
\[
\xymatrix{
    L^p(0,1) \ar[r]^{V_\xi}\ar[d]_{V_\eps} &C_0([0,1]) \\
    L^p(0,1) \ar[ru]_{V_\alpha} & 
  }.
  \]
  According to Proposition~\ref{prop:compacite-C_0}, $V_\eps:L^p(0,1)\to L^p(0,1)$ is compact and according to Proposition~\ref{bound}, $V_\alpha:L^p(0,1)\to C_0([0,1])$ is bounded. Thus the operator $V_{\xi}:L^p(0,1)\to C_0([0,1])$ is compact, by the ideal property of compact operators.
\end{proof}

\begin{rem}
\rm{Note that if $\Re(\xi)=1$, according to Proposition~\ref{bound}, the operator $V_\xi:L^1(0,1)\to C_0([0,1])$ is bounded. 
However, it cannot be compact, not even weakly compact. Indeed, consider $g_n(t)=(n+1)t^n$, $n\gtr 1$, $t\in (0,1)$. Then $g_n$ is in the unit ball of $L^1(0,1)$ and according to \eqref{eq:action-monome}, we have 
\[
V_\xi(g_n)(x)=\frac{\Gamma(n+2)}{\Gamma(n+1+\xi)}x^{n+\xi},\qquad x\in (0,1), n\gtr 1.
\]
Now a standard argument shows that the sequence $(V_\xi(g_n))_{n\gtr 1}$ cannot have a weakly convergent subsequence in $C_0([0,1])$, which shows that $V_\xi$ is not weakly compact, and thus not compact from $L^1(0,1)$ to $C_0([0,1])$. 

Nevertheless, it is proved in \cite{Lefevre} that $V_1$ is finitely strictly singular from $L^1(0,1)$ to $C_0([0,1])$. 
 }
\end{rem}

\subsection{Schatten classes}
In Hilbert spaces, an (often) easier way to check compactness of an operator is to prove its membership to the Hilbert--Schmidt class.
\begin{prop}\label{RLHS}
The operator $V_\xi$ is Hilbert-Schmidt on $L^2(0,1)$ if and only if $\xi\in\C_{\frac{1}{2}}$. Moreover, in this case we have  
$$\big\|V_\xi\|_{\mathcal S^2}=\frac{1}{|\Gamma(\xi)|\sqrt{2\Re(\xi)(2\Re(\xi)-1)}}\;\cdot$$
\end{prop}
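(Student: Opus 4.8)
The plan is to exploit the fact that $V_\xi$ is an integral operator and to invoke the classical isometric identification of the Hilbert--Schmidt class on $L^2(0,1)$ with the space of square-integrable kernels: an operator $T\in\mathcal B(L^2(0,1))$ belongs to $\mathcal S^2(L^2(0,1))$ if and only if it is an integral operator whose kernel $K$ lies in $L^2((0,1)\times(0,1))$, in which case $\|T\|_{\mathcal S^2}=\|K\|_{L^2}$. Since, according to \eqref{RL1}, the operator $V_\xi$ is exactly the integral operator with kernel
\[
K_\xi(x,u)=\frac{1}{\Gamma(\xi)}(x-u)^{\xi-1}\ind_{\{0<u<x\}},
\]
the membership $V_\xi\in\mathcal S^2$ is equivalent to $K_\xi\in L^2((0,1)^2)$, and the whole statement reduces to computing $\|K_\xi\|_{L^2}$.

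Setting $\tau=\Re(\xi)$ and using that $x-u>0$ is a positive real, so that $|(x-u)^{\xi-1}|=(x-u)^{\tau-1}$, I would write
\[
\|K_\xi\|_{L^2}^2=\frac{1}{|\Gamma(\xi)|^2}\int_0^1\left(\int_0^x (x-u)^{2\tau-2}\,du\right)dx.
\]
The inner integral is evaluated via the substitution $s=x-u$, giving $\int_0^x s^{2\tau-2}\,ds=\dfrac{x^{2\tau-1}}{2\tau-1}$, which converges precisely when $2\tau-1>0$, that is, when $\tau>\frac12$. The remaining integral $\int_0^1 x^{2\tau-1}\,dx=\dfrac{1}{2\tau}$ then converges for every $\xi\in\C_0$ since $\tau>0$. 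Collecting the two factors yields
\[
\|K_\xi\|_{L^2}^2=\frac{1}{|\Gamma(\xi)|^2\,2\tau(2\tau-1)},
\]
and taking square roots gives the announced value of $\|V_\xi\|_{\mathcal S^2}$.

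For the converse (the ``only if'' direction), I would observe that if $\tau\less\frac12$ then the inner integral above already diverges for every $x\in(0,1)$, so that $K_\xi\notin L^2((0,1)^2)$; by the uniqueness of the kernel and the kernel characterization of $\mathcal S^2$, this forces $V_\xi\notin\mathcal S^2$. There is essentially no hard step here: the only points requiring genuine care are the correct justification of the Hilbert--Schmidt/kernel identification in \emph{both} directions (for which I would cite the standard isometry $\mathcal S^2(L^2(\mu))\cong L^2(\mu\otimes\mu)$ recorded earlier), and pinning down the exact convergence threshold $\tau>\frac12$, which comes from the integrability along the diagonal $u=x$ of the singularity $(x-u)^{2\tau-2}$.
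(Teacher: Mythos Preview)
Your proof is correct and essentially identical to the paper's own argument: both identify $V_\xi$ as the integral operator with kernel $K_\xi(x,u)=\dfrac{1}{\Gamma(\xi)}\ind_{(0,x)}(u)(x-u)^{\xi-1}$, invoke the standard characterization $V_\xi\in\mathcal S^2\Longleftrightarrow K_\xi\in L^2((0,1)^2)$ with $\|V_\xi\|_{\mathcal S^2}=\|K_\xi\|_{L^2}$, and carry out the same elementary two-step integration to obtain the threshold $\tau>\tfrac12$ and the exact norm.
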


\begin{proof}
Observe that $V_\xi$ is a kernel operator with kernel equals to 
\[
K_\xi(x,u)=\frac{1}{\Gamma(\xi)}\ind_{(0,x)}(u)(x-u)^{\xi-1}, \qquad (x,u)\in (0,1)^2,\xi\in\C_0.
\]
Then, it is well known  that $V_\xi$ is Hilbert-Schmidt on $L^2(0,1)$ if and only if $K_\xi\in L^2((0,1)^2)$ and in this case $\|V_\xi\|_{\mathcal S^2}=\|K_\xi\|_{L^2((0,1)^2)}$. Note that 
\begin{eqnarray*}
\|K_\xi\|^2_{L^2((0,1)^2)}&=&\int_0^1\int_0^1 \frac{1}{|\Gamma(\xi)|^2}\ind_{(0,x)}(u)|(x-u)^{\xi-1}|^2\,du\,dx\\
&=&\frac{1}{|\Gamma(\xi)|^2}\int_0^1 \int_0^x (x-u)^{2(\tau-1)}\,du\,dx,
\end{eqnarray*}
where $\tau=\Re(\xi)$. We see that the last quantity is finite if and only if $2\tau-1>0$, that is $\tau>1/2$. Therefore, $V_\xi$ is Hilbert-Schmidt on $L^2(0,1)$ if and only if $\xi\in\C_{1/2}$ and in this case, we have
\[
\big\|V_\xi\|^2_{\mathcal S^2}=\frac{1}{|\Gamma(\xi)|^2}\int_0^1 \frac{x^{2\tau-1}}{2\tau-1}\,dx=\frac{1}{2\tau(2\tau-1)|\Gamma(\xi)|^2},
\]
which gives the result.
\end{proof}

\begin{rem}
\rm{
We can also recover a part of Proposition~\ref{RLHS} as a consequence of  Proposition \ref{prop-Vxi-psumming-xisuperieur-1p}. Indeed,  when $H$ and $K$ are Hilbert spaces, according to \eqref{Diestel-eq10}, we have $\Pi_2(H,K)=\mathcal S^2(H,K)$.}
\end{rem}

We are going to extend Proposition~\ref{RLHS}  and characterize the membership of Schatten classes for the Riemann-Liouville analytic semigroup.

\begin{theorem}\label{RLSr}
Let $\xi\in\C_0$ and $r\gtr 1$. The following are equivalent:
\begin{enumerate}[(i)]
\item The operator $V_\xi:L^2(0,1)\to L^2(0,1)$ belongs to the Schatten class $\mathcal{S}^r$.
\item $\xi\in\C_{\frac{1}{r}}$.
\end{enumerate}
\end{theorem}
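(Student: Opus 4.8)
The plan is to prove the two implications separately. The necessity $(i)\Rightarrow(ii)$ will be a soft consequence of Lemma~\ref{lem:schatten} together with the Schatten characterization \eqref{cnschatten1}, while the sufficiency $(ii)\Rightarrow(i)$ will rest on the interpolation result, Theorem~\ref{interpolschatten}, applied to a renormalized semigroup, plus the factorization \eqref{eq:S2-S2=S1} and the norm estimate \eqref{normVt}. Throughout I use that the family $(e_n)_{n\gtr 1}$ is orthonormal in $L^2(0,1)$.

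For $(i)\Rightarrow(ii)$ I argue by contraposition. Suppose $\Re(\xi)\less 1/r$; since $r\gtr 1$ this forces $\Re(\xi)\less 1$, so Lemma~\ref{lem:schatten} applies and gives $\langle V_\xi e_n,e_n\rangle_{L^2,L^2}\sim (2i\pi n)^{-\xi}$. Computing $|(2i\pi n)^\xi|=(2\pi n)^{\Re(\xi)}\e^{-\pi\Im(\xi)/2}$, this yields $|\langle V_\xi e_n,e_n\rangle|\sim c\,n^{-\Re(\xi)}$ for some constant $c>0$, whence the general term satisfies $|\langle V_\xi e_n,e_n\rangle|^r\sim c^r n^{-r\Re(\xi)}$. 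Taking $h_n=k_n=e_n$ in the necessary condition \eqref{cnschatten1}, membership $V_\xi\in\mathcal S^r$ would force $\sum_{n\gtr 1}|\langle V_\xi e_n,e_n\rangle|^r<\infty$; but the series $\sum_n n^{-r\Re(\xi)}$ diverges precisely because $r\Re(\xi)\less 1$. Hence $V_\xi\notin\mathcal S^r$, which is the desired contrapositive.

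For $(ii)\Rightarrow(i)$, the endpoint $r=1$ is handled directly: if $\Re(\xi)>1$ then $\Re(\xi/2)>1/2$, so by Proposition~\ref{RLHS} $V_{\xi/2}\in\mathcal S^2$, and writing $V_\xi=V_{\xi/2}V_{\xi/2}$ the factorization \eqref{eq:S2-S2=S1} gives $V_\xi\in\mathcal S^1$ with $\|V_\xi\|_{\mathcal S^1}\less\|V_{\xi/2}\|_{\mathcal S^2}^2$. For general $r>1$ I interpolate, using the crucial renormalization $T_z=\Gamma(z/2)^2 V_z$. On any vertical line $\Re(z)=\alpha_1>1$ the same factorization bounds the trace norm uniformly in $\Im(z)$: using $\|V_{z/2}\|_{\mathcal S^2}=|\Gamma(z/2)|^{-1}(\alpha_1(\alpha_1-1))^{-1/2}$ from Proposition~\ref{RLHS}, I get $\|T_z\|_{\mathcal S^1}=|\Gamma(z/2)|^2\|V_z\|_{\mathcal S^1}\less 1/(\alpha_1(\alpha_1-1))$. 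On any vertical line $\Re(z)=\alpha_0>0$ the estimate \eqref{normVt} gives $\|T_z\|_{\mathcal S^\infty}=|\Gamma(z/2)|^2\|V_z\|\less \alpha_0^{-1}\,|\Gamma(z/2)|^2/|\Gamma(z)|$, and the Legendre duplication formula rewrites the ratio as $\sqrt{\pi}\,2^{1-z}\,\Gamma(z/2)/\Gamma(z/2+1/2)$, which by the Stirling asymptotics $\Gamma(s)/\Gamma(s+1/2)=O(|s|^{-1/2})$ is $O(|z|^{-1/2})$ on the line, hence bounded. Since $z\mapsto\langle T_z x,y\rangle$ is holomorphic on $\C_0$ (Proposition~\ref{prop:semigroup-RL}) and bounded and continuous on each closed strip (same Gamma estimates), Theorem~\ref{interpolschatten} applies with $p_1=1$ on $\Re(z)=\alpha_1$ and $p_0=\infty$ on $\Re(z)=\alpha_0$, placing $T_z$ in $\mathcal S^p$ on the line $\Re(z)=\theta\alpha_1+(1-\theta)\alpha_0$ with $1/p=\theta$. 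Given $\xi$ with $\Re(\xi)>1/r$, I set $\theta=1/r$ and choose $\alpha_1>1$, $\alpha_0\in(0,\alpha_1)$ with $\theta\alpha_1+(1-\theta)\alpha_0=\Re(\xi)$ — possible exactly because $\Re(\xi)>1/r=\theta\cdot 1+(1-\theta)\cdot 0$. This gives $T_\xi\in\mathcal S^r$, and multiplying by the nonzero scalar $\Gamma(\xi/2)^{-2}$ yields $V_\xi\in\mathcal S^r$.

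The heart of the argument, and the only genuine obstacle, is the choice of the weight $\psi(z)=\Gamma(z/2)^2$: it must simultaneously absorb the $1/|\Gamma(z)|$ blow-up of $\|V_z\|$ at the $\mathcal S^\infty$ end and the $1/|\Gamma(z/2)|^2$ blow-up of $\|V_z\|_{\mathcal S^1}$ at the trace-class end, so that the two suprema $S_0$ and $S_1$ in Theorem~\ref{interpolschatten} are finite. Verifying the resulting uniform bounds along vertical lines reduces, via the duplication formula, to the elementary Stirling estimate $\Gamma(s)/\Gamma(s+1/2)=O(|s|^{-1/2})$; once this is in place, the remaining work is only bookkeeping with the interpolation exponents.
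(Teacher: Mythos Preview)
Your proof is correct and follows essentially the same route as the paper: Lemma~\ref{lem:schatten} plus \eqref{cnschatten1} for the necessity, and Theorem~\ref{interpolschatten} applied to a Gamma-weighted version of the semigroup for the sufficiency. The only noteworthy difference is your choice of weight: you take $T_z=\Gamma(z/2)^2V_z$ and then control $|\Gamma(z/2)|^2/|\Gamma(z)|$ on the $\mathcal S^\infty$ line via the Legendre duplication formula and Stirling, whereas the paper uses $T_z=\Gamma(z)\,\Gamma(z/2)^2V_z$, so that the factor $|\Gamma(z)|$ from \eqref{normVt} cancels outright and only the elementary bound $|\Gamma(x+iy)|\le\Gamma(x)$ is needed to get $\sup_{\Re(z)=\alpha_0}\|T_z\|_{\mathcal S^\infty}\le\Gamma(\alpha_0/2)^2/\alpha_0$ and $\sup_{\Re(z)=\alpha_1}\|T_z\|_{\mathcal S^1}\le\Gamma(\alpha_1)/(\alpha_1(\alpha_1-1))$. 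Both normalizations work; the paper's just avoids the appeal to Stirling asymptotics.
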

%\textcolor{red}{en fait ce serait bien de regler le cas $r=1$ pour t complexe}
\begin{proof}
$(i)\implies (ii)$: we use reductio ad absurdum and assume that $V_\xi\in\mathcal{S}^r$ and $\Re(\xi)\less \frac{1}{r}$. From \eqref{cnschatten1}, we must have, for every orthonormal sequences $\big(h_n\big)_{n\gtr1}$ of $L^2(0,1)$,
\begin{equation}\label{condnecschatten}
\sum_{n\gtr1}\big|\langle V_\xi(h_n),h_n\rangle_{2}\big|^r<\infty\;.
\end{equation}
Here $\langle \cdot,\cdot\rangle_2$ stands for the scalar product on $L^2(0,1)$ (which concides with the duality bracket $\langle \cdot,\cdot \rangle_{L^2,L^2}$). We are going to test this condition on the family $\big(e_n\big)_{n\in\mathbb Z}$ with $e_n(\theta)=\e^{2i\pi n\theta}$. Since $0<\Re(\xi)\less\frac{1}{r}\less 1$, it follows from Lemma~\ref{lem:schatten} that
\[
\left| \langle V_\xi e_n,e_n\rangle_{L^p,L^{p'}}\right| \sim\frac{1}{(2\pi n)^{\Re(\xi)}},\qquad\mbox{as }n\to\infty.
\]
Thus \eqref{condnecschatten} would imply that 
\[
\sum_{n\gtr 1}\frac{1}{n^{r\Re(\xi)}}<\infty,
\]
giving a contradiction with $r\Re(\xi)\less 1$. Thus, we conclude that for $r\gtr 1$, if $V_\xi\in\mathcal{S}^r$ then $\Re(\xi)>\frac{1}{r}\,\cdot$\medskip

$(ii)\implies (i)$: Let $r\gtr 1$. We want to prove that the condition $\Re(\xi)>\frac{1}{r}$ implies that $V_\xi\in\mathcal{S}^r$. 

We fix $\xi\in\C_{\frac{1}{r}}$, we can choose $\eps>0$ such that $\tau=\Re(\xi)=\dfrac{1}{r}+\eps$ and we shall use Theorem~\ref{interpolschatten} with $\alpha_0=\eps$, $\alpha_1=1+\eps$, $p_0=\infty$ and $p_1=1$ and
\[
T_z=\Gamma(z)(\Gamma(z/2))^2\,V_z, \qquad z\in\overline{\Omega},
\]
where $\Omega=\{z\in\C: \eps<\Re(z)<1+\eps\}$.
First note that, according to Proposition~\ref{prop:semigroup-RL}, the map $z\longmapsto T_z$ is holomorphic from $\Omega$ into $\mathcal B(L^2(0,1))$. 
Moreover, using \eqref{normVt},  for every $\xi\in\C_0$, we have 
\[
\|T_\xi\|_{\mathcal B(L^2)} \less\frac{(\Gamma(\tau/2))^2}{\tau}\;, 
\]
so that $\dis\|T_\xi\|_{\mathcal B(L^2)}$ is bounded for $\xi\in\overline{\Omega}$. Observe now that when $\Re(z)=\eps$, $T_z$ is compact (according to Proposition~\ref{prop:compacite-C_0}), whence $T_z\in\mathcal S^\infty(L^2(0,1))$, and 
\[
\|T_z\|_{\mathcal S^\infty}=\|T_z\|_{\mathcal B(L^2)}\less\frac{(\Gamma(\eps/2))^2}{\eps}\,\cdot
\]

On the other hand, for $\xi\in\C_1$, $\frac{\xi}{2}\in\C_{\frac{1}{2}}$ and Proposition~\ref{RLHS} implies that $V_{\frac{\xi}{2}}\in\mathcal S^2(L^2(0,1))$ and 
\[
\|V_{\frac{\xi}{2}}\|_{\mathcal S^2}\less \frac{1}{|\Gamma(\frac{\xi}{2})|\sqrt{\Re(\xi)(\Re(\xi)-1)}}.
\] 
Then according to \eqref{eq:S2-S2=S1}, $V_\xi=V_{\frac{\xi}{2}}^2\in\mathcal S^1(L^2(0,1))$ and 
\[
\|V_\xi\|_{\mathcal S^1}\less \|V_{\frac{\xi}{2}}\|^2_{\mathcal S^2}\less \frac{1}{|\Gamma(\frac{\xi}{2})|^2\Re(\xi)(\Re(\xi)-1)}.
\]
Hence, for any $z\in\C_1$, $T_z\in\mathcal S^1(L^2(0,1))$ and for $\Re(z)=1+\eps$, we have 
\[
\|T_z\|_{\mathcal S^1}\less\frac{|\Gamma(z)||\Gamma(\frac{z}{2})|^2}{|\Gamma(\frac{z}{2})|^2\Re(z)(\Re(z)-1)}\less \frac{|\Gamma(1+\eps)|}{\eps(1+\eps)}.
\]
From Theorem \ref{interpolschatten} (with $\theta=\frac{1}{r}$), we get that, $T_\xi\in\mathcal{S}^{r}(L^2(0,1))$, whence $V_\xi\in\mathcal{S}^{r}(L^2(0,1))$.\end{proof}

%Finally, the semigroup structure and the ideal property give the conclusion. Indeed for every $\xi\in\C_{\frac{1}{r}}$, we can choose $\eps>0$ such that $\dfrac{1}{r}+\eps<\Re(\xi)$ and we can write $V_\xi=V_{\frac{1}{r}+\eps}\circ V_{s}$ where $s\in\C_0$. The ideal property of $\mathcal{S}^{r}(L^2(0,1))$ implies that $V_\xi\in\mathcal{S}^{r}(L^2(0,1))$.

\subsection{Characterization of nuclear and absolutely $p$-summing operators on $L^p(0,1)$.}

We can first recover both the absolutely $p$-summing property and the Hilbert-Schmidt property (when $p=2$) as a consequence of the following result.

\begin{prop}\label{OB}
Let $\xi\in\C_0$ and $1\less p,q<\infty$.
\begin{enumerate}[(i)]
\item The operator $V_\xi:L^p(0,1)\to L^q(0,1)$ is order bounded if and only if $\xi\in\C_{\frac{1}{p}}$.
\item The operator $V_\xi^*:L^p(0,1)\to L^q(0,1)$ is order bounded if and only if $\xi\in\C_{\frac{1}{p}}$, where $V_\xi^*$ is given by \eqref{eq:formule-adjoint32A23}.
\end{enumerate}
\end{prop}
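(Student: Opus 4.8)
The plan is to reformulate everything in terms of the kernel of $V_\xi$ and to use the classical principle that a kernel operator $Tf(x)=\int_0^1 k(x,u)f(u)\,du$ from $L^p(0,1)$ to $L^q(0,1)$ is order bounded if and only if the function $x\mapsto\|k(x,\cdot)\|_{L^{p'}}$ lies in $L^q(0,1)$. For $V_\xi$ the kernel is $k_\xi(x,u)=\frac{1}{\Gamma(\xi)}(x-u)^{\xi-1}\ind_{(0,x)}(u)$, so with $\tau=\Re(\xi)$ one has
\[
\|k_\xi(x,\cdot)\|_{p'}=\frac{1}{|\Gamma(\xi)|}\Big(\int_0^x (x-u)^{(\tau-1)p'}\,du\Big)^{1/p'}.
\]
This integral converges precisely when $(\tau-1)p'>-1$, i.e. $\tau>\frac1p$, in which case it equals a constant times $x^{\tau-1/p}$, a function bounded on $(0,1)$ and hence in every $L^q(0,1)$; when $\tau\less\frac1p$ it diverges for \emph{every} $x$, so $\|k_\xi(x,\cdot)\|_{p'}\equiv+\infty$. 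The sufficiency in $(i)$ is then immediate: if $\xi\in\C_{\frac1p}$, then by H\"older, for every $f$ with $\|f\|_p\less1$,
\[
|V_\xi f(x)|\less \|k_\xi(x,\cdot)\|_{p'}\,\|f\|_p\less h(x),\qquad h(x):=\frac{1}{|\Gamma(\xi)|}\frac{x^{\tau-1/p}}{((\tau-1)p'+1)^{1/p'}},
\]
and $h\in L^q(0,1)$, so $V_\xi(B_{L^p})$ is order bounded.

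The substance is the necessity, i.e. showing that order boundedness forces $x\mapsto\|k_\xi(x,\cdot)\|_{p'}\in L^q(0,1)$. Suppose $V_\xi$ is order bounded by some $0\less h\in L^q(0,1)$. The obstacle, which I expect to be the main difficulty, is that the test function realizing $\|k_\xi(x,\cdot)\|_{p'}$ at a given $x$ depends on $x$, whereas order boundedness only supplies the bound $|V_\xi f|\less h$ a.e. for each fixed $f\in B_{L^p}$ separately. I would get around this with a single countable family: fix a countable set $D$ of simple functions (rational coefficients on rational subintervals) dense in $B_{L^p}$, discard for each $f\in D$ the null set where $|V_\xi f|>h$, and keep the set (still of full measure) off which $\sup_{f\in D}|V_\xi f(x)|\less h(x)$.

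The key point is then that for every $x$ one has $\sup_{f\in D}|V_\xi f(x)|=\|k_\xi(x,\cdot)\|_{p'}$. This follows from the martingale/partition description of the dual norm, namely $\|g\|_{p'}=\sup\big(\sum_i |I_i|^{1-p'}|\int_{I_i}g|^{p'}\big)^{1/p'}$ over partitions into intervals (valid even with value $+\infty$, by Jensen's inequality together with Lebesgue differentiation along a refining sequence), applied to $g=k_\xi(x,\cdot)\in L^1(0,1)$, the near-optimal partition sums being realized by elements of $D$. Consequently $\|k_\xi(x,\cdot)\|_{p'}\less h(x)$ a.e., whence $\|k_\xi(x,\cdot)\|_{p'}\in L^q(0,1)$, which by the computation above forces $\tau>\frac1p$. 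In fact, when $\tau\less\frac1p$ the left-hand side is identically $+\infty$ while $h<\infty$ a.e., so order boundedness is outright impossible; this disposes of the whole range $\xi\notin\C_{\frac1p}$, including the boundary $\tau=\frac1p$ (where an elementary single-bump argument would not suffice), and completes $(i)$.

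Part $(ii)$ is entirely parallel. By \eqref{eq:formule-adjoint32A23} the operator $V_\xi^*$ is the kernel operator with kernel $\frac{1}{\overline{\Gamma(\xi)}}(u-x)^{\overline{\xi}-1}\ind_{(x,1)}(u)$, so that
\[
\|k^*(x,\cdot)\|_{p'}=\frac{1}{|\Gamma(\xi)|}\Big(\int_0^{1-x} t^{(\tau-1)p'}\,dt\Big)^{1/p'},
\]
obtained from the previous computation by the reflection $x\mapsto 1-x$. This is a bounded (hence $L^q$) function exactly when $\tau>\frac1p$ and is identically $+\infty$ when $\tau\less\frac1p$; applying the same kernel criterion yields $(ii)$.
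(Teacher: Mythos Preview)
Your proof is correct and follows essentially the same kernel approach as the paper: both identify order boundedness with the condition that $x\mapsto\|K_\xi(x,\cdot)\|_{p'}$ lies in $L^q(0,1)$, compute this norm explicitly, and observe it is either bounded (when $\tau>1/p$) or identically $+\infty$ (when $\tau\le 1/p$). The paper simply asserts the identity $\sup_{\|f\|_p\le 1}\big|\int_0^x f(u)(x-u)^{\xi-1}\,du\big|=\|K_\xi(x,\cdot)\|_{p'}$ without further comment, whereas you supply the countable-density/partition argument to pass from the per-$f$ a.e.\ bound to the pointwise supremum; this is a legitimate bit of extra care rather than a different method.
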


\begin{proof}
Denote by $\tau=\Re(\xi)$. 

$(i)$ Denote by $K_\xi(x,u)=\ind_{(0,x)}(u)(x-u)^{\xi-1}$, $(x,u)\in (0,1)^2$.
We have 
\[
\sup_{\|f\|_p\less1}\Bigg|\int_0^xf(u)(x-u)^{\xi-1}\;du\Bigg|=\| K_\xi(x,\cdot)\|_{p'}
\]
where $p'$ is the conjugate exponent of $p$. Observe that the function $K_\xi(x,\cdot)$ belongs to $L^{p'}(0,1)$ if and only $\dis(\tau-1)p'+1>0$, which is equivalent to $\tau>\frac{1}{p}$. Moreover, in this case, we have
\[
\|K_\xi(x,\cdot)\|_{p'}=\begin{cases} 
\frac{x^{\tau-\frac{1}{p}}}{\big((\tau-1)p'+1\big)^{\frac{1}{p'}}}, & \mbox{if }1<p<\infty\\
x^{\tau-1},& \mbox{if }p=1.
\end{cases}
\]
It remains to observe that in both cases, the latter function lies in $L^q(0,1)$ (actually even in $C([0,1]))$. 

$(ii)$ The proof is similar and left to the reader, replacing the kernel $K_\xi$ of $V_\xi$ by the kernel  $K^*_\xi(x,u)=\ind_{(x,1)}(u)(x-u)^{\bar\xi-1}$, $(x,u)\in (0,1)^2$, of $V_\xi^*$. 
%$$\sup_{\|f\|_p\less1}\Bigg|\int_x^1f(u)(x-u)^{\overline{\xi}-1}\;du\Bigg|=\Bigg(\int_x^1(x-u)^{p'(\tau-1)}\;du\Bigg)^{\frac{1}{p'}}=\frac{(1-x)^{\tau-\frac{1}{p}}}{\big((\tau-1)p'+1\big)^{\frac{1}{p'}}}\;.$$
%The same conclusion occurs.
\end{proof}

\begin{corol}\label{cor:p-integral}
Let $1\less p,q<\infty$ and $\xi\in\C_{\frac{1}{p}}$. Then both operators $V_\xi$ and $V_\xi^*$ are $q$-integral and hence absolutely $q$-summing from $L^p(0,1)$ to $L^q(0,1)$.
\end{corol}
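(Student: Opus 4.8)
The plan is to read this corollary off directly from Proposition~\ref{OB} together with the abstract facts recalled in Section~\ref{sec:2}, with essentially no further computation. The guiding principle is that order boundedness of an $L^q$-valued operator is precisely what forces $q$-integrality, and that $q$-integrality in turn forces absolute $q$-summability; both of these implications are already available.

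First I would invoke Proposition~\ref{OB}(i): since $\xi\in\C_{\frac{1}{p}}$, the operator $V_\xi\colon L^p(0,1)\to L^q(0,1)$ is order bounded. I then apply Theorem~\ref{Diestel-theorem8}(i) with target space $L^q(\mu)$, where $\mu$ is Lebesgue measure on $(0,1)$: every order bounded operator into $L^q(\mu)$ is $q$-integral, so $V_\xi\in\mathcal I_q(L^p(0,1),L^q(0,1))$. Running the same argument on Proposition~\ref{OB}(ii) shows that $V_\xi^*$, which under the hypothesis $\xi\in\C_{\frac{1}{p}}$ is again order bounded from $L^p(0,1)$ to $L^q(0,1)$ (with $V_\xi^*$ given by \eqref{eq:formule-adjoint32A23}), is likewise $q$-integral.

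It then remains to pass from $q$-integral to absolutely $q$-summing, which is immediate from the contractive inclusion $\mathcal I_q(X,Y)\subset \Pi_q(X,Y)$ recorded in \eqref{Diestel-eq7}, applied with $X=L^p(0,1)$ and $Y=L^q(0,1)$. Hence both $V_\xi$ and $V_\xi^*$ belong to $\Pi_q(L^p(0,1),L^q(0,1))$, completing the proof.

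Since every ingredient has already been established, there is no genuine obstacle here; the one point deserving care is the bookkeeping of exponents. The relevant ``integral/summing'' index is $q$, the exponent of the \emph{target} space, and not $p$: accordingly, Theorem~\ref{Diestel-theorem8}(i) and the inclusion \eqref{Diestel-eq7} must both be read with their dummy exponent set equal to $q$, while the hypothesis $\xi\in\C_{\frac{1}{p}}$ enters only through the order boundedness supplied by Proposition~\ref{OB}.
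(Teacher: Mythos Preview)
Your proof is correct and follows essentially the same route as the paper: invoke Proposition~\ref{OB} to get order boundedness, apply Theorem~\ref{Diestel-theorem8}(i) to deduce $q$-integrality, and then use the inclusion \eqref{Diestel-eq7} to obtain absolute $q$-summability. The paper also offers a second, alternative argument for $V_\xi$ via the factorization $V_\xi = i_q \circ V_\xi$ through $C([0,1])$, but your approach matches its primary proof.
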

\begin{proof}
It follows from Theorem~\ref{Diestel-theorem8} $(i)$ that any order bounded operator $u:X\to L^q(\mu)$ is $q$-integral and according to \eqref{Diestel-eq7}, we have $\mathcal I_q(X,Y)\subset \Pi_q(X,Y)$. The result follows now immediately from Proposition~\ref{OB}.

Alternatively, we can factorize $V_\xi$ as 
\[
\xymatrix{
    L^p(0,1) \ar[r]^{V_\xi}\ar[d]_{V_\xi} &L^q(0,1) \\
    C([0,1]) \ar[ru]_{i_q} & 
  },
  \]
  which shows that $V_\xi$ is $q$-integral by definition.
\end{proof}
\medskip

For the membership to the class of nuclear operators, we need to recall some known facts on Fourier multipliers on the Hardy space $H^p=\{g\in L^p(0,1):\hat g(n)=0,\, n<0\}$, $p\gtr 1$. 
\begin{lemma}\label{lem:fourier-multiplier}
Let $p\gtr 1$. For every $y\in\R$, we consider the Fourier multiplier $M_y$ defined on the analytic polynomials $p$ by 
\begin{equation}\label{eq:fourier-multiplier}
M_y(p)(\theta)=\sum_{n=1}^{\infty}n^{iy}\hat{p}(n)\e^{2i\pi n\theta},\qquad \theta\in (0,1).
\end{equation}
Then $M_y$ is well-defined and bounded on $H^p$.
\end{lemma}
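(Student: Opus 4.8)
The plan is to regard $M_y$ as the Fourier multiplier on $\T$ with symbol $m(n)=n^{iy}$, $n\gtr1$, and to prove that it is bounded on $H^p$ by isolating a single hard endpoint. The case $p=2$ is immediate: since $|m(n)|=|n^{iy}|=1$, the map $M_y$ preserves the $H^2$-norm of every analytic polynomial and so extends to an isometry of $H^2$. The strategy is then to establish boundedness on $H^1$, and to reach all other exponents by complex interpolation of the analytic Hardy scale (which gives $1\less p\less2$) followed by a duality argument (which gives $2\less p<\infty$). Throughout, the useful structural fact is that $m$ is the restriction to the positive integers of the smooth function $t\mapsto t^{iy}$ on $(0,\infty)$, with $\frac{d^k}{dt^k}t^{iy}$ of modulus $\less C_k(1+|y|)^k\,t^{-k}$, so that $m$ behaves like a Mikhlin--H\"ormander symbol.

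The crux is the case $p=1$. Here the naive approach fails, because the one-sided symbol $m$ secretly contains the Riesz (analytic) projection, which is unbounded on $L^1(\T)$. The resolution is that on the \emph{analytic} space $H^1$ this projection acts as the identity and is therefore harmless. Concretely, I would extend $m$ to a genuine Mikhlin multiplier $\tilde m$ on all of $\Z$, for instance $\tilde m(n)=|n|^{iy}$ for $n\neq0$ and $\tilde m(0)=0$; the associated convolution operator on $\T$ is then a Calder\'on--Zygmund operator, whose kernel satisfies H\"ormander's integral condition, and hence is bounded on the real Hardy space $H^1(\T)$. For $f\in H^1$ we have $\tilde m(n)=m(n)$ on the support $\{n\gtr1\}$ of $\widehat f$, so $M_yf$ coincides with $T_{\tilde m}f$ and is again analytic; since the real and analytic $H^1$-norms are comparable on analytic functions, this yields $\|M_yf\|_{H^1}\less C(1+|y|)\,\|f\|_{H^1}$. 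Verifying the H\"ormander kernel estimates for the imaginary-power symbol, with constants polynomial in $|y|$, is the technical heart of the argument.

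With the endpoints $p=1$ and $p=2$ in hand, I would conclude by interpolation and duality. Complex interpolation of the Hardy scale, $[H^1,H^2]_\theta=H^p$, upgrades boundedness to every $p\in[1,2]$. For $2<p<\infty$, note that with respect to the $H^p$--$H^{p'}$ duality the adjoint of $M_y$ is $M_{-y}$ (because $\overline{n^{iy}}=n^{-iy}$); since $1<p'<2$ is already covered, $M_{-y}$ is bounded on $H^{p'}$, and hence $M_y$ is bounded on its dual $H^p$. As an alternative to interpolation, the interior range $1<p<\infty$ can also be treated directly by the Marcinkiewicz multiplier theorem: one checks $\sup_n|m(n)|=1$ and, by the mean value theorem, the dyadic-block variation bound $\sum_{2^k\less n<2^{k+1}}|m(n+1)-m(n)|\less|y|$ uniformly in $k$, which only leaves $p=1$ to be handled as above. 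Finally, density of the analytic polynomials in $H^p$ makes the resulting bounds define $M_y$ unambiguously on all of $H^p$.
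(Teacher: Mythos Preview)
Your proposal is correct, and for $1<p<\infty$ it coincides with the paper's argument: the paper also invokes the Marcinkiewicz multiplier theorem, verifying precisely the uniform dyadic variation bound $\sum_{2^k\le n<2^{k+1}}|m(n+1)-m(n)|\le|y|$ that you mention as your alternative. The genuine difference is at $p=1$. The paper does not pass through Calder\'on--Zygmund theory or Mikhlin--H\"ormander kernel estimates; instead it cites a multiplier criterion of Daly--Fridli for the real Hardy space $H_{2\pi}$, which requires only a uniform dyadic $\ell^r$-variation bound on the symbol for some $r>1$, checks that $n\mapsto n^{iy}$ satisfies it (again an elementary mean-value estimate), and then observes that a multiplier on $H_{2\pi}$ restricts to one on the analytic space $H^1$. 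Your route---extending to $\tilde m(n)=|n|^{iy}$, invoking the $H^1(\T)$ bound for Mikhlin multipliers, and comparing real and analytic $H^1$ norms on analytic functions---is more classical and keeps the argument within standard singular-integral machinery, at the cost of having to verify (or quote) the H\"ormander kernel condition and the $H^1$ mapping property of CZ operators on the torus. The paper's route trades that for a single black-box citation whose hypothesis is a one-line computation. Either way, once $p=1$ is done, your interpolation/duality scheme is a legitimate alternative to applying Marcinkiewicz on the whole range $p>1$, though the latter is shorter.
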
 

\begin{proof}
The case $p>1$ follows from a result of Marcinkiewicz \cite[Theorem 1]{Mar}. Indeed if $(\lambda_n)_{n\geq 1}$ is the sequence defined by $\lambda_n=n^{iy}$, $n\gtr 1$, it is sufficient to check that $(\lambda_n)_{n\geq 1}$ is bounded and satisfies 
\begin{equation}\label{eq:Marc-CS}
\sup_{n\geq 0}\sum_{k=2^n}^{2^{n+1}}|\lambda_{k+1}-\lambda_k|<\infty.
\end{equation}
The boundedness of $(\lambda_n)_{n\geq 1}$ is clear and for the second condition, note that
\begin{eqnarray*}
\sum_{k=2^n}^{2^{n+1}}|\lambda_{k+1}-\lambda_k|&=& \sum_{k=2^n}^{2^{n+1}}\left|\sin\left(\frac{y\ln(1+\frac{1}{k})}{2}\right)\right|\\
&\less& \frac{|y|}{2}\sum_{k=2^n}^{2^{n+1}}\frac{1}{k}\\
&\less & |y|,
\end{eqnarray*}
which proves \eqref{eq:Marc-CS}. Thus $M_y$ is bounded on $H^p$ when $p>1$.

The case $p=1$ follows from a result of Daly--Fridli \cite[Theorem 2.1]{DF}. Indeed, we need to check that for $r>1$, we have
\begin{equation}\label{eq:Daly-CS}
\sup_{j\geq 1}\left(2^j\left(\sum_{k=2^j}^{2^{j+1}-1}\frac{|\lambda_{k+1}-\lambda_k|^r}{2^j}\right)^{1/r}\right)<\infty.
\end{equation}
Note that 
\begin{eqnarray*}
2^j\left(\sum_{k=2^j}^{2^{j+1}-1}\frac{|\lambda_{k+1}-\lambda_k|^r}{2^j}\right)^{1/r}&\leq & \frac{|y|}{2}2^j\left(\frac{1}{2^j}\sum_{k=2^j}^{2^{j+1}-1}\frac{1}{k^r}\right)^{1/r}\\
&\leq& \frac{|y|}{2}2^j\left(\frac{1}{2^j}\frac{1}{2^{jr}}2^j\right)^{1/r}\\
&=&\frac{|y|}{2},
\end{eqnarray*}
which proves \eqref{eq:Daly-CS}. Thus $M_y$ is bounded on $H^1$. In fact, formally, the result of Daly--Fridli tells us that $M_y$ is a multiplier for the real Hardy space $H_{2\pi}=\{g:\T\to\R: g\in L^1(\T)\, \mbox{ and }\widetilde{g}\in L^1(\T)\}$, where $\tilde{g}$ is the harmonic conjugate of $g$.  However, it is not difficult to see that if $M_y$ is a multiplier for $H_{2\pi}$, it is a multiplier for $H^1$. 
\end{proof}

We will now characterize the membership to the class of nuclear operators.

\begin{theorem}\label{RLnuclear}
Let $\xi\in\C_0$ and $p\gtr1$.
The following assertions are equivalent:
\begin{enumerate}[(i)]
\item The operator $V_\xi:L^p(0,1)\to L^p(0,1)$ is nuclear.
\item $\xi\in\C_1$.
\end{enumerate}
In particular, if $\xi\in\C_1$, we recover that the operator $V_\xi$ belongs to the Schatten class $\mathcal{S}^1$ on $L^2(0,1)$.
\end{theorem}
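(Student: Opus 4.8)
The plan is to prove both implications by analyzing the matrix of $V_\xi$ in the exponential system $(e_n)_{n\in\mathbb Z}$, exploiting that $\|e_n\|_{L^p}=\|e_n\|_{L^{p'}}=1$ for every $p$ (this is what will make the threshold $\Re(\xi)>1$ independent of $p$), together with the diagonal asymptotics of Lemma~\ref{lem:schatten} and the multiplier boundedness of Lemma~\ref{lem:fourier-multiplier}. Specializing to $p=2$ at the end and invoking the isometric identity $\mathcal N_1(H,K)=\mathcal S^1(H,K)$ from \eqref{Diestel-eq10} will recover the \emph{in particular} statement, in agreement with the case $r=1$ of Theorem~\ref{RLSr}.

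For the necessity $(i)\Rightarrow(ii)$, I would assume $V_\xi\in\mathcal N_1(L^p)$ with $1<p<\infty$ and test nuclearity against the diagonal coefficients $\langle V_\xi e_n,e_n\rangle$, which by \eqref{eq:equivalent-cle} behave like $(2i\pi n)^{-\xi}$. Since for a single $p\ne2$ the diagonal of a nuclear operator need not be in $\ell^1$ (Hausdorff--Young fails), I would not sum $|\langle V_\xi e_n,e_n\rangle|$ directly. Instead, I would first compose with the Fourier multiplier $M_{\Im(\xi)}$ of Lemma~\ref{lem:fourier-multiplier}, the Riesz projection $P_+$ (bounded on $L^p$ for $1<p<\infty$), and a suitable unimodular constant $\lambda$, so that $\widetilde T=\lambda\,P_+M_{\Im(\xi)}V_\xi P_+$ is still nuclear and has a \emph{real, positive} diagonal $\langle\widetilde T e_n,e_n\rangle\sim c\,n^{-\Re(\xi)}$ with $c>0$. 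The key point is then the uniform control of partial traces: writing $S_N$ for the $N$-th partial Fourier sum (uniformly bounded on $L^p$ for $1<p<\infty$), one has $\sum_{n=1}^N\langle\widetilde T e_n,e_n\rangle=\mathrm{tr}(S_N\widetilde T S_N)$ and $\nu_1(S_N\widetilde T S_N)\le\|S_N\|^2\,\nu_1(\widetilde T)\le C\nu_1(\widetilde T)$, uniformly in $N$. As the summands are nonnegative and $\sim c\,n^{-\Re(\xi)}$, this forces $\sum_n n^{-\Re(\xi)}<\infty$, i.e. $\Re(\xi)>1$. The endpoint $p=1$ I would treat separately, e.g. through the order-boundedness description of Proposition~\ref{OB} or by duality.

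For the sufficiency $(ii)\Rightarrow(i)$, fix $\Re(\xi)>1$ and expand $V_\xi$ in the exponential system. A Fubini computation on the Volterra kernel gives $\langle V_\xi e_n,e_{n+m}\rangle=\gamma_{n,m}$, with $\gamma_{n,0}=\langle V_\xi e_n,e_n\rangle$ and, for $m\ne0$, $\gamma_{n,m}=\dfrac{\beta(n)-\beta(n+m)}{-2i\pi m}$, where $\beta(n)=\widehat{\Phi_\xi}(n)\sim(2i\pi n)^{-\xi}$. I would split $V_\xi=V_\xi^{(0)}+\sum_{m\ne0}V_\xi^{(m)}$ along shifted diagonals. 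The diagonal part $V_\xi^{(0)}=\sum_n\gamma_{n,0}\,e_n\otimes e_n$ is immediately nuclear: by Lemma~\ref{lem:schatten} its symbol lies in $\ell^1$ for $\Re(\xi)>1$, and each $e_n\otimes e_n$ has nuclear norm $1$ in every $L^p$, so $\nu_1(V_\xi^{(0)})\le\sum_n|\gamma_{n,0}|<\infty$. The off-diagonal entries are of divided-difference (Calderón commutator) type $\frac{\beta(n)-\beta(n+m)}{m}$, and here the naive estimate $\sum_{n,m}|\gamma_{n,m}|$ already \emph{diverges}.

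The main obstacle is precisely this off-diagonal nuclear estimate. The plan is to regroup the shifted diagonals $V_\xi^{(m)}$ and realize the commutator part as compositions of diagonal multipliers carrying an $\ell^1$ symbol with the bounded multipliers $M_y$ of Lemma~\ref{lem:fourier-multiplier}, using that for $\Re(\xi)>1$ one has both $\beta\in\ell^1$ and $(\beta(n+1)-\beta(n))_n\in\ell^1$, so that the regrouped blocks are nuclear with a summable total contribution. Since every rank-one $e_n\otimes e_k$ has nuclear norm $1$ on all $L^p$, the resulting bound on $\nu_1(V_\xi)$ is uniform in $p$, giving the common threshold $\Re(\xi)>1$; the harmonic-analytic input of Lemma~\ref{lem:fourier-multiplier} is exactly what replaces the failed absolute summability of the matrix coefficients.
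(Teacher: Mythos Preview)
Your necessity argument $(i)\Rightarrow(ii)$ for $1<p<\infty$ is essentially the paper's proof, repackaged via the trace inequality $|\mathrm{tr}(S_N\widetilde T S_N)|\le\|S_N\|^2\nu_1(\widetilde T)$ rather than by expanding a nuclear representation directly; both hinge on Lemma~\ref{lem:schatten}, the multiplier $M_y$ of Lemma~\ref{lem:fourier-multiplier}, and the uniform boundedness of $S_N^+$ on $L^p$. Your treatment of $p=1$, however, is a genuine gap: neither Proposition~\ref{OB} nor duality gives what you need (nuclear does not imply order bounded, and the Riesz projection fails on $L^1$). The paper handles $p=1$ by the pointwise estimate $\|S_N^+h\|_1=o(\log N)$ for each fixed $h\in L^1$, which is exactly what rescues the argument when the uniform $S_N^+$ bound is unavailable.

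Your sufficiency direction $(ii)\Rightarrow(i)$ has a concrete error and a real gap, and the paper takes a completely different route. The error: for $\Re(\xi)>1$ the asymptotic $\beta(n)=\widehat{\phi_\xi}(n)\sim(2i\pi n)^{-\xi}$ is \emph{false}. The function $\phi_\xi(s)=s^{\xi-1}/\Gamma(\xi)$ does not vanish at $s=1$, and the resulting boundary term dominates: one has $\widehat{\phi_\xi}(n)=\dfrac{i}{2\pi n\,\Gamma(\xi)}+o(1/n)$ (cf.~\eqref{eq:coeff-Fourier-psi-alpha}), so $\beta\notin\ell^1$ and your key hypothesis ``$\beta\in\ell^1$'' collapses. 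Likewise, Lemma~\ref{lem:schatten} is stated only for $0<\Re(\xi)\le1$, so it does not directly justify your diagonal $\ell^1$ claim either. The gap: even granting correct asymptotics, the off-diagonal ``commutator'' regrouping is only a plan; you have not exhibited any decomposition of $V_\xi$ into pieces whose nuclear norms sum. The paper bypasses all matrix analysis: for $1\le p\le2$ it factors $V_\xi=V_{\xi/2}\circ V_{\xi/2}$ through $L^1\to L^2\to C([0,1])$, invokes Grothendieck's theorem (Theorem~\ref{Diestel-theorem4}) and \eqref{Diestel-eq3} to make each factor absolutely $2$-summing, and concludes by Theorem~\ref{Diestel-theorem9}\,$(iii)$; for $p>2$ it writes $\xi=a+b+c$ and combines a $p$-integral piece (Corollary~\ref{cor:p-integral}), a compact piece, and a $p'$-summing piece via Theorem~\ref{Diestel-theorem9}\,$(i)$--$(ii)$. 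This factorization approach is short, uses only the boundedness results of Proposition~\ref{bound}, and is what you should replace your matrix scheme with.
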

%
%In particular, for any $\xi\in\C_0$ with $\tau=Re(\xi)<1$, $V_\xi$ is not nuclear, else $V_1=V_t\circ V_{1-\xi}$ would be nuclear as well (recall $1-\xi\in\C_0$).

\begin{proof}
$(ii)\implies (i)$:  Let $\xi\in\C_1$. We decompose the proof into two cases, depending whether $1\less p\less 2$ or $p>2$.

Let us first start with $1\less p\less 2$. We will show that the operator $V_\xi:L^p(0,1)\to L^p(0,1)$ can be factorized as the composition of two absolutely $2$-summing operators. Indeed, observe that we have the following commutative diagram: 
\def\commutatif{\ar@{}[rrd]|{\circlearrowleft}}
\[ 
\xymatrix{
 L^p(0,1)\ar[rr]^{V_\xi} \ar[d]_i \commutatif &  & L^p(0,1)\\
L^1(0,1)\ar[r]_{V_{\frac{\xi}{2}}} &  L^2(0,1)\ar[r]_{V_{\frac{\xi}{2}}}& C([0,1])\ar[u]_{i_p}
}
\]
where $i$ and $i_p$ are the natural inclusions. Since $\frac{\xi}{2}\in\C_{\frac{1}{2}}$, Proposition~\ref{bound} implies that $V_{\frac{\xi}{2}}$ is bounded from $L^1(0,1)$ to $L^2(0,1)$ and Proposition~\ref{bound} implies that $V_{\frac{\xi}{2}}$ is bounded from $L^2(0,1)$ to $C([0,1])$. 
Now the Grothendieck theorem (Theorem~\ref{Diestel-theorem4}) implies that $V_{\frac{\xi}{2}}$ is absolutely $1$-summing from $L^1(0,1)$ to $L^2(0,1)$, and then 
\[
V_{\frac{\xi}{2}}\circ i:L^p(0,1)\to L^2(0,1)\mbox{ is absolutely $2$-summing},
\]
by ideal property and \eqref{Diestel-eq2}. On the other hand, by \eqref{Diestel-eq3}, the canonical inclusion $i_p:C([0,1])\to L^p(0,1)$ is absolutely $p$-summing. Thus 
\[
i_p\circ V_{\frac{\xi}{2}}:L^2(0,1)\to L^p(0,1)\mbox{ is absolutely $p$-summing},
\]
hence absolutely $2$-summing since $p\less 2$. But, according to Theorem~\ref{Diestel-theorem9} $(iii)$, the composition of two absolutely $2$-summing operators is nuclear. It follows  that $V_\xi=(j\circ V_{\frac{\xi}{2}})\circ (V_{\frac{\xi}{2}}\circ i)$ is nuclear on $L^p(0,1)$. 

Assume now that $p>2$. 
We can write $\xi=a+b+c$, where $a\in\C_{\frac{1}{p}}$, $b\in\C_0$ and $c\in\C_{\frac{1}{p'}}$, and $p'$ is the conjugate exponent of $p$ (indeed, since $\Re(\xi)>1$, we can choose $\varepsilon>0$ such that $\Re(\xi)>1+\varepsilon$, and then set $a=\frac{1}{p}+\frac{\varepsilon}{2}$, $c=\frac{1}{p'}+\frac{\varepsilon}{2}$ and $b=\xi-a-c$). Then we can factorize $V_\xi$ as the composition of three operators
\[
V_\xi=V_c\circ V_b\circ V_a,
\]
where each of them acts from $L^p(0,1)$ into itself. 
Since $a\in\C_{\frac{1}{p}}$, the operator $V_a$ is $p$-integral on $L^p(0,1)$ by Corollary~\ref{cor:p-integral} and Proposition~\ref{prop:compacite-C_0} implies that $V_b$ is compact. 
Since the product of a compact operator with a $p$-integral operator is $p$-nuclear (see Theorem~\ref{Diestel-theorem9} $(i)$), the operator $V_b\circ V_a$ is $p$-nuclear on $L^p(0,1)$. According to Theorem~\ref{Diestel-theorem9} $(ii)$, it remains to check that $V_c$ is absolutely $p'$-summing to conclude that $V_\xi$ is nuclear. 
For that purpose, we will decompose $V_c$, writing $c=c_1+c_2$, where $c_1\in\C_{\frac{1}{p}}$ and $c_2\in\C_{\frac{1}{p'}-\frac{1}{p}}$. 
Now, since $c_1\in\C_{\frac{1}{p}}$, the operator $V_{c_1}$ is bounded from $L^p(0,1)$ to $C([0,1])$ by Proposition~\ref{bound}. 
Moreover, since $p>2$, then $p'<2<p$ and it follows from Proposition~\ref{bound} that $V_{c_2}$ is bounded from $L^{p'}(0,1)$ to $L^p(0,1)$. 
Thus we have the following commutative diagram
\def\commutatif{\ar@{}[rd]|{\circlearrowleft}}
\[
 \xymatrix{
   L^p(0,1) \ar[r]^{V_c} \ar[d]_{V_{c_1}} \commutatif & L^p(0,1) \\
    C([0,1]) \ar[r]_{i_{p'}} & L^{p'}([0,1])\ar[u]_{V_{c_2}}
  }
\]
where $i_{p'}$ is the canonical injection. 
But, as already observed the map $i_{p'}:C([0,1])\to L^{p'}(0,1)$ is a $p'$-summing operator, whence $V_c$ is also $p'$-summing on $L^p(0,1)$ by the ideal property. Hence $V_\xi$ is nuclear, which concludes the proof of the first implication.

$(i)\implies (ii)$: According to the semigroup property and the ideal property of the class of nuclear operators, it is sufficient to check that for $\xi=1+iy$, $y\in\mathbb R$, $V_\xi$ is not nuclear on $L^p(0,1)$. 
we use reductio ad absurdum: we assume that there exist two sequences $(h_k)_{k\gtr 0}$ in $L^{p'}(0,1)$ and $(g_k)_{k\gtr 0}$ in $L^p(0,1)$ such that 
\[
V_\xi(f)=\sum_{k=0}^\infty \langle f,h_k\rangle g_k,\qquad f\in L^p(0,1),
\]
and 
\begin{equation}\label{eq:nuclear-absurd}
\sum_{k=0}^{+\infty}\|h_k\|_{p'}\,\|g_k\|_p<\infty
\end{equation}
For simplicity,  for $f\in L^p(0,1)$ and $g\in L^{p'}(0,1)$, $\langle f,g \rangle$ stands for $\langle f,g\rangle_{L^p,L^{p'}}$. 

For every integer $n\gtr 1$, we then have 
\[
\langle V_\xi(e_n),e_{n}\rangle=\sum_{k=0}^{+\infty} \langle e_n,h_k\rangle \langle g_k,e_{n}\rangle.
\]
where $e_n(\theta)=\e^{2i\pi n\theta}$. Hence, for any integer $N\gtr 1$, we get
\begin{eqnarray*}
\sum_{n=1}^N n^{iy} \langle V_\xi(e_n),e_{n}\rangle&=&\sum_{k=0}^\infty \sum_{n=1}^N \langle e_n,h_k\rangle n^{iy}\langle g_k,e_{n}\rangle\\
&=&\sum_{k=0}^\infty \langle M_y(S_N^+(g_k)),h_k\rangle,
\end{eqnarray*}
where $S_N^+ g$ corresponds to the (positive) partial sum of the Fourier series defined by
\[
S_N^+g=\sum_{n=1}^N \langle g,e_{n}\rangle e_n,\qquad g\in L^p(0,1),
\]
and $M_y$ is the Fourier multiplier defined in  \eqref{eq:fourier-multiplier}. According to Lemma~\ref{lem:schatten}, we know that 
\[
n^{iy} \langle V_\xi(e_n),e_{n}\rangle\sim \frac{1}{(2i\pi)^\xi}\frac{1}{n},\qquad \mbox{as }n\to\infty.
\]
Therefore we have 
\begin{eqnarray*}
\log(N) &\lesssim& \left|\sum_{n=1}^N n^{iy} \langle V_\xi(e_n),e_{n}\rangle \right| \notag \\
&=& \left|\sum_{k=0}^\infty \langle M_y(S_N^+(g_k)),h_k\rangle, \right|\notag\\
&\less & \sum_{k=0}^\infty \|M_y(S_N^+ g_k)\|_p \|h_k\|_{p'}.
\end{eqnarray*}
According to Lemma~\ref{lem:fourier-multiplier}, we know that $M_y$ is bounded on $H^p$ for $p\gtr 1$, whence we get
\begin{equation}\label{eq::nuclear-absurd2}
\log(N)\lesssim \sum_{k=0}^\infty \|S_N^+ g_k\|_p \|h_k\|_{p'} .
\end{equation}
Thanks to the classical Riesz theorem, if $p>1$, then $S_N^+$ defines a bounded operator on $L^p$ with bound independent from $N$. Hence $\|S_N^+ g_k\|_p\lesssim \|g_k\|_p$, and we get 
\[
\log(N)\lesssim \sum_{k=0}^\infty \|g_k\|_p\|h_k\|_{p'},
\]
which contradicts \eqref{eq:nuclear-absurd}, when $p>1$.

For $p=1$, we will also get a contradiction using \eqref{eq::nuclear-absurd2}. 
Indeed, we  know that $h\in L^1(0,1)\mapsto S_N^+(h)\in L^1(0,1)$ defines a bounded operator on $L^1(0,1)$ with bound $O(log(N))$.
Moreover, for every $h\in L^1(0,1)$, we claim that 
\begin{equation}\label{olog}
\|S_N^+(h)\|_1=o(log(N)).
\end{equation}
Indeed, given $\eps>0$, there exists a trigonometric polynomial $Q$ satisfying $\|h-Q\|_1\le\eps$.
This polynomial $Q$ can be written as 
$$Q=\sum_{k=-d}^dc_ne_n,$$
where $d\in\N$ and each Fourier coefficient satisfies $|c_n|\le\|Q\|_1$, so that, when $N\gtr d$, we have $\dis\|S_N^+(Q)\|_1\le d\|Q\|_1\;.$

Therefore, for some numerical constant $C>0$, we have
\begin{align*}
\frac{\|S_N^+(h)\|_1}{\log(N)}&\le\frac{\|S_N^+(h-Q)\|_1}{\log(N)}+\frac{d\|Q\|_1}{\log(N)}\\
&\le C\|h-Q\|_1+\frac{d\|Q\|_1}{\log(N)}\\
&\le C\eps+\eps,
\end{align*}
as soon as $N$ is large enough, and \eqref{olog} is proved.\smallskip

Using \eqref{eq::nuclear-absurd2}, we obtain that 
$$1\lesssim\sum_{k=0}^{+\infty}\|h_k\|_{\infty}\,\frac{\|S_N^+(g_k)\|_1}{\log(N)}.$$
But the right hand side tends to zero when $N$ goes to $\infty$ (use \eqref{olog} and \eqref{eq:nuclear-absurd} and apply the Lebesgue domination theorem). This contradiction gives our conclusion.\end{proof}

We can now complete Corollary~\ref{cor:p-integral} and give a complete characterization of absolutely $r$-summing operators on $L^p(0,1)$. In the next result, we denote by $\Pi_r(X)=\Pi_r(X,X)$, the ideal of absolutely $r$-summing operators from $X$ to itself.
\begin{theorem}\label{thm-r-summing}
Let $1\less p,r<\infty$ and $\xi\in\C_0$. 
\begin{enumerate}[(i)]
\item If $2\gtr p\gtr 1$, then we have
\[
V_\xi\in\Pi_r(L^p(0,1))\Longleftrightarrow \xi\in\C_{\frac{1}{2}}\cdot
\]
\item If $p>2$ and $1\less r\less p'$, then we have
\[
V_\xi\in\Pi_r(L^p(0,1))\Longleftrightarrow \xi\in\C_{\frac{1}{p'}}\cdot
\]
\item If $p>2$ and $p'<r\less p$, then we have
\[
V_\xi\in\Pi_r(L^p(0,1))\Longleftrightarrow  \xi\in\C_{\frac{1}{r}}\cdot
\]
\item If $p>2$ and $p<r$, then we have
\[
V_\xi\in\Pi_r(L^p(0,1))\Longleftrightarrow \xi\in\C_{\frac{1}{p}}\cdot
\]

\end{enumerate}
\end{theorem}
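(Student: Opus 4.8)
The plan is to prove the two implications separately, organising the work by the following reductions. By the monotonicity $\Pi_r\subset\Pi_{r'}$ for $r\less r'$ (see \eqref{Diestel-eq2}) and the fact that $L^p(0,1)$ has cotype $\max(p,2)$, the range $2\gtr p\gtr 1$ collapses to a single ideal: both domain and range have cotype $2$, so $\Pi_r(L^p)=\Pi_1(L^p)$ for all $r$ by \eqref{Diestel-eq5}. For $p>2$ I keep $r$ but split according to $r\less p$ or $r>p$, the dividing role of $p'$ appearing only in the necessity estimate. Throughout, the arithmetic of the thresholds reflects the integrability of the kernel $\phi_\xi(u)\sim u^{\xi-1}$: one has $\phi_\xi\in L^s(0,1)$ iff $\Re(\xi)>1/s'$, so $1/p$, $1/r$ and $1/p'$ are exactly the critical abscissae for $\phi_\xi\in L^{p'}$, $L^{r'}$ and $L^p$ respectively.

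For sufficiency I factor $V_\xi$ through a composition already known to lie in the relevant ideal. In cases (i)--(ii) I aim at the smallest class $\Pi_1$ and use Grothendieck's theorem: writing $V_\xi=i\circ V_\eta\circ j$ with $j\colon L^p\hookrightarrow L^1$, a bounded $V_\eta\colon L^1\to L^2$ (Proposition~\ref{bound}), and a last arrow equal either to the inclusion $L^2\hookrightarrow L^p$ (if $p\less 2$) or to a bounded $V_{\eta'}\colon L^2\to L^p$ (if $p>2$), Theorem~\ref{Diestel-theorem4} makes $V_\eta$ absolutely $1$-summing and the ideal property transfers this to $V_\xi$; adding real parts gives the thresholds $\tfrac12$ and $1/p'$. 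In case (iii) I factor $V_\xi=V_{\xi_2}\circ V_{\xi_1}$ through $L^r$, with $V_{\xi_1}\colon L^p\to L^r$ absolutely $r$-summing for $\xi_1\in\C_{1/p}$ (Corollary~\ref{cor:p-integral}) and $V_{\xi_2}\colon L^r\to L^p$ bounded for $\xi_2\in\C_{1/r-1/p}$ (Proposition~\ref{bound}), the real parts summing to $1/r$. Case (iv) is immediate from $V_\xi\in\Pi_p(L^p)$ for $\xi\in\C_{1/p}$ (Proposition~\ref{prop-Vxi-psumming-xisuperieur-1p}) together with $\Pi_p\subset\Pi_r$.

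The necessity is the core, and each regime requires its own test family; the common input is Lemma~\ref{lem:schatten}, giving $\|V_\xi e_n\|_p\gtrsim|\langle V_\xi e_n,e_n\rangle|\gtrsim n^{-\Re(\xi)}$. When $p\less 2$, I reduce to $r=2$ by cotype and feed the orthonormal system $(e_n)$ into \eqref{eq:psum}; since $p'\gtr 2$, Bessel's inequality bounds $\sup_{\|g\|_{p'}\less 1}\big(\sum_{n\less N}|\widehat g(n)|^2\big)^{1/2}$ by $1$, so the forced finiteness of $\sum_n n^{-2\Re(\xi)}$ yields $\Re(\xi)>\tfrac12$. For $p>2$ the Fourier test only gives the floor $1/p$, so for $r\less p$ I use $N$ spread approximate point masses $f_{n,k}=n\ind_{(k/N,\,k/N+1/n)}$ with $n\gg N$: a direct computation gives $\|V_\xi f_{n,k}\|_p\asymp n^{1-\Re(\xi)-1/p}$ when $\Re(\xi)<1/p'$, while Hölder on the disjoint intervals yields $\big(\sum_k|\langle f_{n,k},g\rangle|^r\big)^{1/r}\lesssim n^{1/p'}\max(1,N^{1/r-1/p'})$; plugging into \eqref{eq:psum} and optimising $n$ against $N$ forces $\Re(\xi)\gtr\min(1/r,1/p')$, which is precisely the threshold of (ii) and (iii). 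Finally, for $r>p$ I sandwich $V_\xi$ between the bounded inclusion $L^p\hookrightarrow L^2$ and a bounded $V_\eta\colon L^2\to L^p$ with $\Re(\eta)=\tfrac12-\tfrac1p$ (the boundary Hardy--Littlewood bound of Proposition~\ref{bound}); as every absolutely $r$-summing operator between Hilbert spaces is Hilbert--Schmidt (\eqref{Diestel-eq6}, \eqref{Diestel-eq10}), the composite $V_{\xi+\eta}$ lies in $\mathcal S^2(L^2)$, and Proposition~\ref{RLHS} gives $\Re(\xi+\eta)>\tfrac12$, i.e. $\Re(\xi)>1/p$.

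The hard part will be the intermediate regime $r\less p$ for $p>2$: there is no reduction to a Hilbert-space Schatten threshold and no single point mass suffices, so one must calibrate the spread of the masses so that the $r$-summing norm actually detects the failure $\phi_\xi\notin L^{r'}$. Relatedly, obtaining strict inequalities at the critical lines needs a boundary analysis: at $\Re(\xi)=1/r$ and $\Re(\xi)=1/p'$ the kernel fails only logarithmically to lie in the critical space, so the divergence is logarithmic and must be read off from a sharpened form of the above estimates; for case (iv) strictness is instead automatic, since the boundary Hardy--Littlewood bound lands exactly on $\Re(\xi+\eta)=\tfrac12$, where $\mathcal S^2$-membership already fails by Proposition~\ref{RLHS}.
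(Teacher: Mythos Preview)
Your sufficiency arguments and the necessity for (i) and (iv) are fine. For (i) you take a different route from the paper: instead of testing on $(e_n)$ and invoking Bessel (which does work, since $L^{p'}\subset L^2$ for $p\less 2$ gives $\sup_{\|g\|_{p'}\le 1}\sum|\hat g(n)|^2\le 1$ and Lemma~\ref{lem:schatten} yields $\sum n^{-2\Re(\xi)}<\infty$, hence $\Re(\xi)>\tfrac12$ strictly), the paper reduces to $\Pi_2$ by cotype and observes that $V_{2\xi}=V_\xi\circ V_\xi$ is nuclear, then quotes Theorem~\ref{RLnuclear}. Your argument is more self-contained; theirs reuses the nuclear characterization. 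For (iv) the two arguments coincide.

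The genuine gap is the necessity in case (iii). Your point-mass family $f_{n,k}=n\ind_{(k/N,k/N+1/n)}$ only yields the \emph{non-strict} bound $\Re(\xi)\gtr\tfrac1r$: with $n=N$ one gets $N^{1/r}\cdot n^{1/p'-\tau}\lesssim n^{1/p'}$, i.e.\ $N^{1/r-\tau}\lesssim 1$, which is vacuous at $\tau=\tfrac1r$. Your claim that a logarithmic refinement closes this is incorrect here. The logarithm appears only when $(\tau-1)p=-1$, i.e.\ at $\tau=\tfrac1{p'}$, which is the boundary of case (ii), not (iii); at $\tau=\tfrac1r<\tfrac1{p'}$ one still has $\|V_\xi f_{n,k}\|_p\asymp n^{1/p'-1/r}$ with no $\log$ correction, and since the weak-$\ell^r$ supremum on the right is \emph{exactly} of order $n^{1/p'}$ (attained by $g$ supported on a single interval), no choice of $n\gtr N$ produces a contradiction. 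So the point-mass test simply does not detect failure on the critical line $\Re(\xi)=\tfrac1r$.

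The paper avoids this by passing to the adjoint and using order boundedness. One sets $s=\tfrac1{p'}-\tfrac1r>0$, so that $V_s^*:L^{p'}\to L^r$ is bounded (Proposition~\ref{bound}(iii)); since $V_\xi^*:L^{p'}\to L^{p'}$ has an absolutely $r$-summing adjoint, Theorem~\ref{Diestel-theorem8}(ii) forces $V_{\xi+s}^*=V_s^*V_\xi^*:L^{p'}\to L^r$ to be order bounded, and Proposition~\ref{OB}(ii) then gives $\Re(\xi)+s>\tfrac1{p'}$, i.e.\ $\Re(\xi)>\tfrac1r$ \emph{strictly}. The same mechanism (with $s=0$) handles (ii). This duality trick is the missing ingredient in your approach.
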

\begin{proof}
$(i)$ Let $1\less p\less 2$ and $r\gtr 1$. Assume first that $\xi\in\C_{\frac{1}{2}}$ and let us show that $V_\xi\in\Pi_r(L^p(0,1))$. Since $1\less p\less 2$, the canonical injections $i:L^p(0,1)\to L^1(0,1)$ and $j:L^2(0,1)\to L^p(0,1)$ are both bounded. On the other hand, since $\xi\in\C_{\frac{1}{2}}$, according to Proposition~\ref{bound}, the operator $V_\xi:L^1(0,1)\to L^2(0,1)$ is bounded. In particular, we have the following commutative diagram 

\def\commutatif{\ar@{}[rd]|{\circlearrowleft}}
\[ 
\xymatrix{
 L^p(0,1)\ar[r]^{V_\xi} \ar[d]_i  \commutatif & L^p(0,1)\\
L^1(0,1)\ar[r]_{V_\xi} & L^2(0,1)\ar[u]_j
}
\]
Now, using the Grothendieck theorem (Theorem~\ref{Diestel-theorem4}), the operator $V_\xi:L^1(0,1)\to L^2(0,1)$ is absolutely $1$-summing, which implies by the ideal property that $V_\xi\in\Pi_1(L^p(0,1))$, {\sl a fortiori} $V_\xi\in\Pi_r(L^p(0,1))$. 
%But since $1\less p\less 2$, $L^p(0,1)$ has cotype $2$ and then, according to \eqref{Diestel-eq5}, we have $\Pi_r(L^p(0,1))=\Pi_1(L^p(0,1))$ for every $r\gtr 1$. We then conclude that $V_\xi\in\Pi_r(L^p(0,1))$. \

Conversely, assume that $V_\xi\in\Pi_r(L^p(0,1))$ for some $r\gtr 1$. Then, since $1\less p\less 2$ and $L^p(0,1)$ has cotype $2$, $V_\xi\in\Pi_2(L^p(0,1))$. Since the composition of two $2$-summing operators is nuclear (see Theorem~\ref{Diestel-theorem9} $(iii)$), we get that $V_{2\xi}=V_\xi\circ V_\xi$ is nuclear on $L^p(0,1)$. According to Theorem~\ref{RLnuclear}, it follows that $2\xi\in\C_1$, which means that $\xi\in\C_{\frac{1}{2}}$. That concludes the proof of $(i)$. 

$(ii)$ Let $p>2$ and $1\less r\less p'$. 
Assume first that $\xi\in\C_{\frac{1}{p'}}$ and let us show that $V_\xi\in\Pi_r(L^p(0,1))$. 
We can write $\xi=\alpha+\beta$, with $\alpha\in\C_{\frac{1}{2}}$ and $\beta\in\C_{\frac{1}{2}-\frac{1}{p}}$. % (indeed, there exist $\eps>0$ such that $\Re(\xi)>\frac{1}{p'}+\eps$, then we may set $\alpha=\frac{1}{2}+\eps$ and $\beta=\xi-\alpha$; then $\Re(\beta)=\Re(\xi)-\frac{1}{2}-\eps>\frac{1}{p'}-\frac{1}{2}=\frac{1}{2}-\frac{1}{p}$). 
Now according to Proposition~\ref{bound}, the operator $V_\beta:L^2(0,1)\to L^p(0,1)$ is bounded, and since $p>2$, the injection $i:L^p(0,1)\to L^2(0,1)$ is also bounded. 
Hence, we have the following commutative diagram
\def\commutatif{\ar@{}[rd]|{\circlearrowleft}}
\[ 
\xymatrix{
 L^p(0,1)\ar[r]^{V_\xi} \ar[d]_i  \commutatif & L^p(0,1)\\
L^2(0,1)\ar[r]_{V_\alpha} & L^2(0,1)\ar[u]_{V_\beta}
}
\]
Since $\alpha\in\C_{\frac{1}{2}}$, it follows from Proposition~\ref{RLHS} that $V_\alpha\in\mathcal S^2(L^2(0,1))$. 
But since $L^2(0,1)$ is a Hilbert space, then we know from \eqref{Diestel-eq6} and \eqref{Diestel-eq10} that $\mathcal S^2(L^2(0,1))=\Pi_2(L^2(0,1))=\Pi_1(L^2(0,1))$. 
Thus $V_\alpha\in\Pi_1(L^p(0,1))$ and by the ideal property, we obtain that the operator  $V_\xi$ belongs to $\Pi_1(L^p(0,1))$. 
But since $r\gtr 1$, we have the inclusion $\Pi_1(L^p(0,1))\subset \Pi_r(L^p(0,1))$, and we conclude that $V_\xi\in\Pi_r(L^p(0,1))$. \

Conversely, assume that $V_\xi\in\Pi_r(L^p(0,1))$. Then since $r\less p'$, we also have $V_\xi\in\Pi_{p'}(L^p(0,1))$. 
The operator $V_\xi^*:L^{p'}(0,1)\to L^{p'}(0,1)$ has thus an adjoint which is absolutely $p'$-summing. It thus follows from Theorem~\ref{Diestel-theorem8} $(iii)$ that $V_\xi^*$ is order bounded on $L^{p'}(0,1)$. Then, according to Proposition~\ref{OB}, we deduce that $\xi\in\C_{\frac{1}{p'}}$.\
 
(iii) Let $p>2$ and $p'<r\less p$. 
Assume first that $\xi\in\C_{\frac{1}{r}}$ and let us show that $V_\xi\in\Pi_r(L^p(0,1))$. 
We can write $\xi=\alpha+\beta$, with $\alpha\in\C_{\frac{1}{p}}$ and $\beta\in\C_{\frac{1}{r}-\frac{1}{p}}$. 
According to Proposition~\ref{bound}, the operator $V_\beta:L^r(0,1)\to L^p(0,1)$ is bounded and Proposition~\ref{bound} implies that the operator $V_\alpha:L^p(0,1)\to C([0,1])$ is bounded. 
If $i_r$ is the canonical injection from $C([0,1])$ to $L^r(0,1)$, we have the following commutative diagram 
\def\commutatif{\ar@{}[rd]|{\circlearrowleft}}
\begin{equation}\label{eq:integral-dfssdsds}
\xymatrix{
 L^p(0,1)\ar[r]^{V_\xi} \ar[d]_{V_\alpha} \commutatif & L^p(0,1)\\
C([0,1]) \ar[r]_{i_r} & L^r(0,1)\ar[u]_{V_\beta}
}
\end{equation}
Now, as already observed, the map $i_r$ is an absolutely $r$-summing operator, and then by the ideal property of $r$-summing operators, we deduce that $V_\xi\in\Pi_r(L^p(0,1))$. 

Conversely, assume that $V_\xi\in\Pi_r(L^p(0,1))$ and let us show that $\Re(\xi)> \frac{1}{r}$. 
Observe that $r'<p$ and let us take $s=\frac{1}{r'}-\frac{1}{p}=\frac{1}{p'}-\frac{1}{r}$.  
According to Proposition~\ref{bound}, the operator $V_s:L^{r'}(0,1)\to L^p(0,1)$ is bounded. Then we have the following commutative diagram 
\[
\xymatrix{
    L^{p'}(0,1) \ar[r]^{V^*_{\xi+s}}\ar[d]_{V_\xi^*} & L^r(0,1) \\
     L^{p'}(0,1)\ar[ru]_{V_s^*} & 
  }
  \]
 Since $V_\xi^*:L^{p'}(0,1)\to L^{p'}(0,1)$ has an adjoint which is absolutely $r$-summing, then, according to Theorem~\ref{Diestel-theorem8} $(ii)$, the operator $V_s^*V_\xi^*$ must be order bounded.  
 Hence $V_{\xi+s}^*:L^{p'}(0,1)\to L^r(0,1)$ is order bounded. 
 It now follows from Proposition~\ref{OB} that $\Re(\xi)+s>\frac{1}{p'}$, equivalently $\Re(\xi)>\frac{1}{r}\,\cdot$ 
 %Since this inequality is true for every $s>\frac{1}{r'}-\frac{1}{p}$, letting $s$ tend to $\frac{1}{r'}-\frac{1}{p}$, we get that $\Re(\xi)\gtr \frac{1}{p'}+\frac{1}{p}-\frac{1}{r'}=\frac{1}{r}$. \

$(iv)$ Let $p>2$ and $p<r$. Assume first that $\xi\in\C_{\frac{1}{p}}$ and let us show that $V_\xi\in\Pi_r(L^p(0,1))$. According to $(iii)$ with $r=p$, we know that $V_\xi\in\Pi_p(L^p(0,1))$. But since $p<r$, it follows from \eqref{Diestel-eq2} that $\Pi_p(L^p(0,1))\subset \Pi_r(L^p(0,1))$, and we immediately get that $V_\xi$ is an absolutely $r$-summing operator on $L^p(0,1)$. \

Conversely, assume that $V_\xi\in\Pi_r(L^p(0,1))$ and let us show that $\Re(\xi)>\frac{1}{p}$.  
We define $s=\frac{1}{2}-\frac{1}{p}$ and, since $p>2$, we know from Proposition~\ref{bound} that the operator $V_s:L^2(0,1)\to L^p(0,1)$ is bounded.
Moreover the canonical injection $i:L^p(0,1)\to L^2(0,1)$ is also bounded. Let us consider the following commutative diagram 
\def\commutatif{\ar@{}[d]|{\circlearrowleft}}
\[
\xymatrix{
 L^2(0,1)\ar[rr]^{V_{\xi+s}} \ar[d]_{V_s} & \commutatif &L^2(0,1)\\
L^p(0,1) \ar[rr]_{V_\xi} && L^p(0,1)\ar[u]_{i}
}
\]
By the ideal property, we deduce that $V_{\xi+s}\in\Pi_r(L^2(0,1))$. But since $L^2(0,1)$ is an Hilbert space, according to \eqref{Diestel-eq6} and \eqref{Diestel-eq10}, we have $\Pi_r(L^2(0,1))=\mathcal S^2(L^2(0,1))$.  Thus $V_{\xi+s}$ is Hilbert-Schmidt  on $L^2(0,1)$ and Proposition~\ref{RLHS} implies that $\xi+s\in\C_{\frac{1}{2}}$, equivalently $\Re(\xi)>\frac{1}{p}\cdot$\end{proof} 

% In other words, for every $s>\frac{1}{2}-\frac{1}{p}$, we have $\Re(\xi)>\frac{1}{2}-s$ and if we let $s$ tend to $\frac{1}{2}-\frac{1}{p}$, we obtain
%\[\Re(\xi)\gtr \frac{1}{2}-\frac{1}{2}+\frac{1}{p}=\frac{1}{p},\]

\begin{rem}
\rm{
If $p>2$, $p'\less r\less p$ and $\xi\in\C_{\frac{1}{r}}$, then it follows from the proof of Theorem~\ref{thm-r-summing} that the operator $V_\xi:L^p(0,1)\to L^p(0,1)$ is not only absolutely $r$-summing but indeed $r$-integral. Indeed, this part of the proof of Theorem~\ref{thm-r-summing} works also for $r=p'$ and the conclusion follows from \eqref{eq:integral-dfssdsds} and the definition of $r$-integral operators.}

%since $1<r<\infty$, the canonical injection $i_r:C([0,1])\to L^r(0,1)$ is order bounded, whence $r$-integral.  According to \eqref{eq:integral-dfssdsds} and the ideal property of $r$-integral operators, we thus deduce that $V_\xi\in\mathcal I_r(L^p(0,1))$. 
\end{rem}

\subsection{Characterization of nuclear, $p$-integral and absolutely $p$-summing operators on $C(0,1)$.}

We first show that Theorem~\ref{RLnuclear} can be extended to the case when $X=C([0,1])$.
\begin{theorem}
Let $\xi\in\mathbb C_0$.
The following assertions are equivalent:
\begin{enumerate}[(i)]
\item The operator $V_\xi:C([0,1])\to C_0([0,1])$ is nuclear. 
\item $\xi\in\C_1$.
\end{enumerate}
\end{theorem}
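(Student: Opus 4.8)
The plan is to show, exactly as for $L^p(0,1)$, that the critical abscissa for nuclearity of $V_\xi\colon C([0,1])\to C_0([0,1])$ is $\Re(\xi)=1$, treating the two implications separately. For the sufficiency $(ii)\Rightarrow(i)$ I would factor $V_\xi$ as a product of two absolutely $2$-summing operators and invoke Theorem~\ref{Diestel-theorem9}~$(iii)$, as in the case $1\le p\le 2$ of Theorem~\ref{RLnuclear}. For the necessity $(i)\Rightarrow(ii)$ I would first reduce to the critical line $\xi=1+iy$ using the semigroup law and the ideal property, and then run the Fourier-coefficient argument of Theorem~\ref{RLnuclear}; the genuinely new ingredient will be Salem's factorization (Theorem~\ref{thm:salem}), used to control the uniform norm of Fourier partial sums of the range functions.

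\textbf{Sufficiency.} Fix $\xi$ with $\Re(\xi)>1$ and split $\xi=a+b$ with $\Re(a)=\Re(b)=\Re(\xi)/2>\tfrac12$, so that $V_\xi=V_a\circ V_b$. I would first check that $V_b\colon C([0,1])\to L^2(0,1)$ is nuclear: it factors as $V_b\circ i_2$, where $i_2\colon C([0,1])\to L^2(0,1)$ is the formal identity, which is absolutely $2$-summing by \eqref{Diestel-eq3}, while $V_b\colon L^2\to L^2$ is Hilbert--Schmidt by Proposition~\ref{RLHS} (since $\Re(b)>\tfrac12$), hence absolutely $2$-summing by \eqref{Diestel-eq10}; the composition of two absolutely $2$-summing operators is nuclear by Theorem~\ref{Diestel-theorem9}~$(iii)$. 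Next, $V_a\colon L^2(0,1)\to C_0([0,1])$ is bounded by Proposition~\ref{bound}~$(iv)$ (again $\Re(a)>\tfrac12$). Since nuclear operators form an ideal, $V_\xi=V_a\circ V_b$ is nuclear from $C([0,1])$ into $C_0([0,1])$.

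\textbf{Necessity.} Using $V_{\xi+t}=V_t\circ V_\xi$ with $V_t\in\mathcal B(C_0([0,1]))$ for $t>0$, nuclearity is upward closed in $\Re(\xi)$, so it is enough to show $V_\xi$ is not nuclear when $\xi=1+iy$. I would assume for contradiction a representation $V_\xi=\sum_k\mu_k\otimes y_k$ with $\mu_k\in M([0,1])=C([0,1])'$, $y_k\in C_0([0,1])$ and $\sum_k\|\mu_k\|\,\|y_k\|_\infty<\infty$. Testing against the exponentials $e_n$ and using $\langle V_\xi e_n,e_n\rangle_{L^2}\sim(2i\pi n)^{-\xi}$ from Lemma~\ref{lem:schatten}, one gets $n^{iy}\langle V_\xi e_n,e_n\rangle\sim c/n$ with $c\neq0$, hence $\big|\sum_{n=1}^N n^{iy}\langle V_\xi e_n,e_n\rangle\big|\gtrsim\log N$. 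On the other hand, expanding the nuclear representation and inserting the multiplier $M_y$ of Lemma~\ref{lem:fourier-multiplier} together with the partial-sum operator $S_N^+$, the same sum equals $\sum_k\mu_k\big(M_yS_N^+y_k\big)$, bounded by $\sum_k\|\mu_k\|\,\|M_yS_N^+y_k\|_\infty$. The argument would then close if $\|M_yS_N^+y_k\|_\infty=o(\log N)$ for each fixed $k$, with a uniform $O(\log N)$ domination, since dominated convergence would force the right-hand side to be $o(\log N)$, contradicting the lower bound $\gtrsim\log N$.

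The hard part will be precisely this last estimate. In the case $p=1$ of Theorem~\ref{RLnuclear} the analogous bound came for free from polynomial approximation, because there the ``bad'' factors lived in $L^\infty$ while $\|S_N^+g_k\|_1=o(\log N)$ held for genuine $L^1$ functions. Here the roles are reversed, and the obstacle is that the range functions $y_k\in C_0([0,1])$ need not extend continuously to the circle $\mathbb{T}$ (one has $y_k(0)=0$ but $y_k(1)$ is arbitrary), so their Fourier partial sums can grow like $\log N$ in the uniform norm through a Gibbs phenomenon. This is exactly where Salem's theorem (Theorem~\ref{thm:salem}) should enter: writing the relevant continuous functions as convolutions $g\ast h$ with $g$ continuous and periodic and $h\in L^1$ gives $S_N^+(g\ast h)=(S_N^+g)\ast h$, whence $\|S_N^+(g\ast h)\|_\infty\le\|S_N^+g\|_\infty\,\|h\|_1$; the partial sums of the truly periodic factor $g$ are then tamed by polynomial approximation, while the $L^1$ factor contributes only a fixed constant. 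Pushing this factorization through the sum $\sum_k\mu_k\big(M_yS_N^+y_k\big)$, and verifying that $M_y$ interacts with it correctly, is the delicate point I expect to absorb most of the work.
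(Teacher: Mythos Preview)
Your sufficiency argument is essentially the paper's. For the necessity you have correctly identified Salem's factorization as the missing ingredient and the non-periodicity of the range functions as the obstacle, but two concrete points separate your sketch from a working proof.

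First, the paper does not test with $e_n$. It uses instead the twisted functions $f_n(u)=(1-u)^{-iy}e_n(u)$ (extended by $0$ at $u=1$, and reached as a pointwise limit of honest continuous functions so that the nuclear representation still applies), chosen precisely so that $V_\xi(f_n)(1)=0$. This single identity makes the periodicization free: since $\sum_k\langle f_n,\mu_k\rangle g_k(1)=V_\xi(f_n)(1)=0$, one may replace each $g_k$ by the periodic $\widetilde g_k(x)=g_k(x)-xg_k(1)$ at no cost, and Salem then applies to $\widetilde g_k$. Testing with $e_n$ and periodicizing afterward generates an error term that you would have to control separately.

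Second, and more seriously, your proposed route via $\|M_yS_N^+y_k\|_\infty$ cannot be closed the way you indicate: Lemma~\ref{lem:fourier-multiplier} gives $M_y$ bounded only on $H^p$ for $1\le p<\infty$, not on $C(\mathbb T)$ or $L^\infty$, so even for a genuinely periodic continuous $g$ the quantity $\|M_yS_N^+g\|_\infty$ is not controlled by $\|S_N^+g\|_\infty$. The paper avoids this by a role reversal. After Salem gives $\widetilde g_k=F_k\ast G_k$ with $F_k\in L^1$, $\|F_k\|_1=1$, and $G_k\in C(\mathbb T)$, the $L^1$ factor $F_k$ is convolved with (a twist of) the measure $\mu_k$ to produce a function $H_k\in L^1(0,1)$ with $\|H_k\|_1\le\|\mu_k\|$, and the whole expression collapses to $\langle V_\xi(f_n),e_n\rangle=\sum_k\langle e_n,H_k\rangle\langle G_k,e_n\rangle$ with $H_k\in L^1$, $G_k\in L^\infty$. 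This is exactly the $p=1$ configuration of Theorem~\ref{RLnuclear}, where $M_y$ lands on the $H^1$ side via $S_N^+H_k$ and the dominated-convergence argument goes through verbatim. In your framework the analogous fix would be to write $M_yS_N^+(G_k\ast F_k)=G_k\ast(M_yS_N^+F_k)$ and bound by $\|G_k\|_\infty\,\|M_yS_N^+F_k\|_1$, again pushing $M_y$ onto the $L^1$ factor rather than the $L^\infty$ one.
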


\begin{proof}
$(ii)\implies (i)$: Let $\xi\in\C_1$. Then $\Re(\xi/2)>\frac{1}{2}$ and the operator $V_{\xi/2}$ factorizes through the formal identity $i_2$ from $C([0,1])$ to $L^2(0,1)$ with the following commutative diagram
\[
\xymatrix{
    C([0,1]) \ar[r]^{V_{\xi/2}}\ar[d]_{i_2} &C_0([0,1]) \\
    L^2(0,1) \ar[ru]_{V_{\xi/2}} & 
  }
  \]
According to Proposition~\ref{bound} $(iv)$, $V_{\xi/2}:L^2(0,1)\to C_0([0,1])$ is bounded. On the other hand, by \eqref{Diestel-eq3}, the operator $i_2:C([0,1])\to L^2(0,1)$ is absolutely $2$-summing. Thus 
\[
V_{\frac{\xi}{2}}=V_{\frac{\xi}{2}}\circ i_2:C([0,1])\to C_0([0,1])
\]
is absolutely $2$-summing. But, according to Theorem~\ref{Diestel-theorem9} $(iii)$, the composition of two absolutely $2$-summing operators is nuclear. Therefore, denoting by $j:C_0([0,1])\to C([0,1])$ the canonical inclusion,  it follows  that $V_\xi=V_{\xi/2}\circ j\circ V_{\xi/2}$ is nuclear from $C([0,1])$ into $C_0([0,1])$.

$(i)\implies (ii)$:  According to the semigroup property and the ideal property of the class of nuclear operators, it is sufficient to check that for $\xi=1+iy$, $y\in\mathbb R$, the operator $V_\xi:C([0,1])\to C_0([0,1])$ is not nuclear. We use reductio ad absurdum: we assume that there exist two sequences $(\mu_k)_{k\gtr 0}$ in $C'([0,1])=\mathcal M([0,1])$ and $(g_k)_{k\gtr 0}$ in $C_0([0,1])$ such that 
\begin{equation}\label{eq:nuclear-absurd0}
V_\xi(f)=\sum_{k=0}^\infty \langle f,\mu_k\rangle g_k,\qquad f\in C([0,1]),
\end{equation} 
and 
\begin{equation}\label{eq:nuclear-absurd}
\sum_{k=0}^{+\infty}\|\mu_k\|\,\|g_k\|_\infty<\infty.
\end{equation}
Here $\mathcal M([0,1])$ denotes the space of  regular complex Borel measures on $[0,1]$ which identifies with the dual of $C([0,1])$ with the following duality bracket 
\[
\langle f,\mu_k\rangle=\int_0^1 f(x)\,d\overline{\mu_k}(x),\qquad f\in C([0,1]).
\]
For $n,m\in\mathbb N^*$, define
\[
f_{n,m}(u)=\begin{cases}
(1-u)^{-iy}(1-u^m)e_n(u)&\mbox{ if }u\in [0,1)\\
0&\mbox{ if }u=1
\end{cases}\]
and 
\[
f_{n}(u)=\begin{cases}
(1-u)^{-iy}e_n(u)&\mbox{ if }u\in [0,1)\\
0&\mbox{ if }u=1
\end{cases}
\]
Then $f_{m,n}\in C([0,1])$ with $\|f_{n,m}\|_\infty\leq 1$, and for every $u\in [0,1]$ we have $f_{n,m}(u)\to f_n(u)$ as $m\to\infty$.  

Using the dominated Lebesgue convergence theorem, we have,  for every $k\gtr 0$, $\lim_{m\to \infty}\langle f_{n,m},\mu_k\rangle=\langle f_n,\mu_k\rangle$. Moreover, since for every $x\in [0,1]$, we have
\[
|\langle f_{n,m},\mu_k\rangle||g_k(x)| \less \|f_{n,m}\|_{\infty}\|\mu_k\| \|g_k\|_\infty\less \|\mu_k\| \|g_k\|_\infty,
\]
it follows from \eqref{eq:nuclear-absurd} and the dominated Lebesgue convergence theorem that 
\begin{equation}\label{eq:823ZRS2}
\lim_{m\to\infty}\sum_{k=0}^\infty \langle f_{n,m},\mu_k\rangle g_k(x)=\sum_{k=0}^\infty \langle f_{n},\mu_k\rangle g_k(x),\qquad x\in [0,1].
\end{equation}
On the other hand, using one more time the dominated Lebesgue convergence theorem, we also have
\begin{equation}\label{eq:823ZRS1ZE2}
\lim_{m\to\infty}V_{\xi}(f_{n,m})(x)=V_\xi(f_n)(x).
\end{equation} 
Therefore using \eqref{eq:nuclear-absurd0}, \eqref{eq:823ZRS2} and \eqref{eq:823ZRS1ZE2}, we have, for every $x\in [0,1]$,
\begin{equation}\label{eq:formule-Vxi-fn-ponctuel}
V_\xi(f_n)(x)=\sum_{k=0}^\infty \langle f_n,\mu_k\rangle g_k(x),
\end{equation}
and the convergence is in $C_0([0,1])$ because of \eqref{eq:nuclear-absurd}. In particular, we have
\begin{equation}\label{eq:nucleaire-non-continu4ER}
\langle V_\xi(f_{n}),e_n\rangle=\sum_{k=0}^\infty  \langle f_n,\mu_k\rangle \langle g_k,e_n\rangle.
\end{equation}
Now observe that 
\[
\Gamma(\xi)\langle V_\xi(f_{n}),e_n\rangle=\int_0^1\left(\int_0^x (x-u)^{iy}(1-u)^{-iy}e^{2i\pi nu}\,du\right)e^{-2i\pi nx}\,dx.
\]
Making the change of variable $s=x-u$ and use the Fubini theorem to get
\begin{eqnarray*}
\Gamma(\xi)\langle V_\xi(f_{n}),e_n\rangle&=&\int_0^1 s^{iy}e^{-2i\pi ns}\left(\int_s^1 (1-x+s)^{-iy}\,dx\right)\,ds\\
&=&\frac{1}{1-iy}\int_0^1 s^{iy}e^{-2i\pi ns}(1-s^{1-iy})\,ds\\
&=&\frac{1}{1-iy}\left(\widehat{\psi_{iy}}(n)-\widehat{\psi_1}(n)\right),
\end{eqnarray*}
where, as in the proof of Lemma~\ref{lem:schatten},  $\psi_{\alpha}(s)=s^\alpha$, $s\in (0,1)$, with $\Re(\alpha)\in (-1,1]$. Using \eqref{eq:coeff-Fourier-psi-alpha}, we then obtain
\begin{eqnarray*}
\Gamma(\xi)\langle V_\xi(f_{n}),e_n\rangle&=&\frac{1}{1-iy}\left(\frac{\Gamma(\xi)}{(2i\pi n)^{1+iy}}+\frac{i}{2\pi n}-\frac{i}{2\pi n}+o\left(\frac{1}{n}\right)\right)\\
&=&\frac{1}{1-iy}\frac{\Gamma(\xi)}{(2i\pi n)^{1+iy}}+o\left(\frac{1}{n}\right).
\end{eqnarray*}
Therefore
\begin{equation}\label{eq:asymptotique-cas-continue}
\langle V_\xi(f_n),e_{n}\rangle\sim \frac{1}{1-iy} \frac{1}{(2i\pi n)^{1+iy}},\qquad \mbox{as }n\to\infty.
\end{equation}

The idea now to get a contradiction is to follow the proof of Theorem~\ref{RLnuclear} (in the case $p=1$) but the problem is to replace in \eqref{eq:nucleaire-non-continu4ER} the measures $\mu_k$ by functions in $L^1(0,1)$. For this purpose, define now $\widetilde{g_k}(x)=g_k(x)-xg_k(1)$, $x\in [0,1]$. We easily see that $\widetilde{g_k}\in C([0,1])$, $\widetilde{g_k}(0)=g_k(0)=0$ since $g_k\in C_0([0,1])$, and $\widetilde{g_k}(1)=g_k(1)-g_k(1)=0$. Then, it follows from Theorem~\ref{thm:salem} that there are $F_k\in L^1(0,1)$ and $G_k\in C([0,1])$ such that 
\begin{equation}\label{eq:fact-preuve-nucleaire}
\widetilde{g_k}=F_k\ast G_k,
\end{equation}
with
\begin{equation}\label{eq2:fact-preuve-nucleaire}
\|G_k\|_\infty\less 2 \|\widetilde{g_k}\|_\infty\less 4 \|g_k\|_\infty,\quad\mbox{and}\quad \|F_k\|_1=1.
\end{equation}
Moreover, note that, for $n\gtr 1$, we have  
\[
V_\xi(f_n)(1)=\int_0^1 (1-u)^{iy}(1-u)^{-iy}e^{2i\pi nu}\,du=0,
\]
and it follows from \eqref{eq:formule-Vxi-fn-ponctuel} that 
\[
\sum_{k=0}^\infty \langle f_n,\mu_k\rangle g_k(1)=0.
\]
Therefore, for every $x\in [0,1]$, we have
\[
V_\xi(f_n)(x)=\sum_{k=0}^\infty \langle f_n,\mu_k\rangle \widetilde{g_k}(x),
\]
and since $\|\widetilde{g_k}\|_\infty\less 2\|g_k\|_\infty$, it follows from \eqref{eq:nuclear-absurd} that the convergence is in $C([0,1])$. In particular, we can write
\[
\langle V_\xi(f_n),e_n\rangle=\sum_{k=0}^\infty \langle f_n,\mu_k\rangle \langle \widetilde{g_k},e_n\rangle.
\]
Denote by $d\widetilde{\mu_k}(u)=(1-u)^{iy}\chi_{[0,1)}(u)d\mu_k(u)$. Then $\langle f_n,\mu_k\rangle=\langle e_n,\widetilde{\mu_k}\rangle$, and by \eqref{eq:fact-preuve-nucleaire}, we have
\begin{eqnarray*}
\langle  f_n,\mu_k \rangle  \langle \widetilde{g_k},e_n\rangle&=&\langle e_n,\widetilde{\mu_k}\rangle \widehat{\widetilde{g_k}}(n)\\
&=& \langle e_n,\widetilde{\mu_k}\rangle \widehat{F_k}(n)\widehat{G_k}(n).
\end{eqnarray*}
Observe now that 
\begin{eqnarray*}
 \langle e_n,\widetilde{\mu_k}\rangle \widehat{F_k}(n)&=&\int_0^1 e^{2i\pi nu}\,d\overline{\widetilde{\mu_k}}(u) \int_0^1 F_k(v)e^{-2i\pi nv}\,dv\\
 &=& \int_0^1 \left(\int_0^1 F_k(v)e^{2i\pi n(u-v)}\,dv\right)\,d\overline{\widetilde{\mu_k}}(u)\\
 &=&\int_0^1\left( \int_0^u F_k(u-s)e^{2i\pi ns}\,ds\right)\,d\overline{\widetilde{\mu_k}}(u)\\
&=&\langle e_n,H_k\rangle,
\end{eqnarray*}
where 
\[
H_k(s)=\int_s^1 \overline{F_k(u-s)}\,d\widetilde{\mu_k}(u).
\]
It is easy to check that $H_k\in L^1(0,1)$ with $\|H_k\|_1\less \|F_k\|_1 \| \mu_k\|=\|\mu_k\|$.  Therefore, we have 
\begin{equation}\label{eq:sdqsqsdqsd1237Zlkn}
\langle V_\xi(f_n),e_n\rangle=\sum_{k=0}^\infty \langle e_n,H_k\rangle \langle G_k,e_n \rangle,
\end{equation}
and, according to \eqref{eq2:fact-preuve-nucleaire} and \eqref{eq:nuclear-absurd}, we have
\begin{equation}\label{eq:sdqsqsdqsd1237Zlsdskn}
\sum_{k=0}^\infty \|H_k\|_1 \|G_k\|_\infty\less 4 \sum_{k=0}^\infty \|\mu_k\| \|g_k\|_\infty<\infty.
\end{equation}
Using \eqref{eq:asymptotique-cas-continue}, \eqref{eq:sdqsqsdqsd1237Zlkn} and \eqref{eq:sdqsqsdqsd1237Zlsdskn}, we obtain now a contradiction using the same argument as in the proof of Theorem~\ref{RLnuclear} (in the case $p=1$).

 \end{proof}

As far as concerns the membership to the ideal of absolutely $r$-summing operators, the situation of $r=1$ brings to light a new phenomenon.
\begin{theorem}
Let $r\gtr 1$ and let $\xi\in\C_0$.
\begin{enumerate}[(a)]
\item If $r>1$, then the following are equivalent:
\begin{enumerate}[(i)]
\item The operator $V_\xi:C([0,1])\to C_0([0,1])$ is $r$-integral.
\item The operator $V_\xi:C([0,1])\to C_0([0,1])$ is absolutely $r$-summing.
\item $\xi\in\C_{\frac{1}{r}}$
\end{enumerate}
\item If $r=1$, then the following are equivalent:
\begin{enumerate}[(i)]
\item The operator $V_\xi:C([0,1])\to C_0([0,1])$ is $1$-integral.
\item The operator $V_\xi:C([0,1])\to C_0([0,1])$ is absolutely $1$-summing.
\item $\xi\in\overline{\C_1}$.
\end{enumerate}
\end{enumerate}
\end{theorem}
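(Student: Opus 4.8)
The plan is to prove both chains by establishing, in each case, the cycle $(i)\Rightarrow(ii)\Rightarrow(iii)\Rightarrow(i)$. The implication $(i)\Rightarrow(ii)$ is free: it is the contractive inclusion $\mathcal I_r\subset\Pi_r$ recorded in \eqref{Diestel-eq7}. The conceptual point, and the reason the threshold is the \emph{open} half-plane $\C_{\frac1r}$ when $r>1$ but the \emph{closed} one $\overline{\C_1}$ when $r=1$, is that both remaining implications are governed by the single condition $\phi_\xi\in L^{r'}(0,1)$. By \eqref{eq:sddsssdqsdqsddnc923E0} this holds exactly when $\xi\in\C_{\frac1r}$ if $r>1$ (a strict condition, since $L^{r'}$-membership of $u^{\Re(\xi)-1}$ reads $(\Re(\xi)-1)r'+1>0$), and exactly when $\Re(\xi)\gtr1$ if $r=1$ (a non-strict condition, since then $r'=\infty$ and the requirement is merely $\Re(\xi)-1\gtr0$).

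For $(iii)\Rightarrow(i)$ I would factor $V_\xi$ through the formal identity. Writing $i_r\colon C([0,1])\to L^r(0,1)$ for the canonical inclusion (with Lebesgue measure, a probability measure on $[0,1]$), \eqref{Diestel-eq345EE34} tells us $i_r$ is $r$-integral, so it suffices to exhibit a bounded $W\colon L^r(0,1)\to C_0([0,1])$ with $V_\xi=W\circ i_r$ and invoke the ideal property of $\mathcal I_r$. The natural candidate is $W(f)=f\ast\phi_\xi=V_\xi f$: when $r>1$ and $\xi\in\C_{\frac1r}$ this is exactly the boundedness $V_\xi\colon L^r(0,1)\to C_0([0,1])$ of Proposition~\ref{bound}(iv), while when $r=1$ and $\Re(\xi)\gtr1$ it is Proposition~\ref{bound}(v). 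This yields $r$-integrality uniformly and with the correct (open/closed) threshold.

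The crux is $(ii)\Rightarrow(iii)$, and here I would combine Pietsch's theorem with a Radon--Nikodym argument. If $V_\xi$ is absolutely $r$-summing then by Theorem~\ref{Pietsch-theorem} there are a regular Borel probability measure $\nu$ on $[0,1]$ and $C>0$ with $\|V_\xi f\|_\infty\less C\big(\int_0^1|f|^r\,d\nu\big)^{1/r}$ for all $f\in C([0,1])$. Evaluating the sup-norm at a fixed $x$, the functional $f\mapsto\frac1{\Gamma(\xi)}\int_0^x f(u)(x-u)^{\xi-1}\,du$ is bounded on $L^r(\nu)$ (recall $C([0,1])$ is dense there), hence represented by some $k_x\in L^{r'}(\nu)$ with $\|k_x\|_{L^{r'}(\nu)}\less C$; testing against continuous functions forces the equality of measures $k_x\,d\nu=\frac1{\Gamma(\xi)}(x-u)^{\xi-1}\ind_{(0,x)}(u)\,du$. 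This shows Lebesgue measure is absolutely continuous with respect to $\nu$ on every $(0,x)$; writing $d\nu=w\,du+d\nu_s$ one gets $0<w<\infty$ a.e. and the explicit form $k_x=\frac{(x-u)^{\xi-1}}{\Gamma(\xi)\,w(u)}\ind_{(0,x)}$. The norm bound $\|k_x\|_{L^{r'}(\nu)}^{r'}\less C^{r'}$ then reads, for every $x\in(0,1]$, $\int_0^x(x-u)^{(\Re(\xi)-1)r'}w(u)^{1-r'}\,du\less(C|\Gamma(\xi)|)^{r'}$.

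To finish for $r>1$ I would integrate this inequality in $x$ over $(0,1)$ and apply Tonelli, obtaining $\int_0^1 w(u)^{1-r'}\big(\int_u^1(x-u)^{(\Re(\xi)-1)r'}\,dx\big)\,du<\infty$. The inner integral diverges as soon as $(\Re(\xi)-1)r'\less-1$, and still diverges logarithmically in the limiting case $(\Re(\xi)-1)r'=-1$; that is, it diverges for every $\Re(\xi)\less\frac1r$ and at the boundary $\Re(\xi)=\frac1r$. Since $0<w<\infty$ a.e. makes $w^{1-r'}>0$ a.e., the left-hand side is then $+\infty$, a contradiction, whence $\Re(\xi)>\frac1r$. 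The case $r=1$ is the same mechanism but cleaner: there $r'=\infty$, the representation gives the pointwise bound $w(u)\gtr(x-u)^{\Re(\xi)-1}/(C|\Gamma(\xi)|)$ for a.e. $u<x$ and every $x$; if $\Re(\xi)<1$ then letting $x\downarrow u$ forces $w=+\infty$ a.e., contradicting $\int_0^1 w\,du\less1$, so $\Re(\xi)\gtr1$. I expect the main obstacle to be precisely this direction, and in particular the boundary value $\Re(\xi)=\frac1r$: a naive attempt with disjoint-support test functions only controls $\|V_\xi f_j\|_\infty\asymp(\text{length})^{\Re(\xi)}$ and cannot separate the boundary, which is why the Radon--Nikodym/Tonelli route — extracting a genuine divergence from the non-integrability of $(x-u)^{-1}$ — seems to me the right tool.
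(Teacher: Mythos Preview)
Your argument is correct. The implications $(i)\Rightarrow(ii)$ and $(iii)\Rightarrow(i)$ coincide with the paper's. For $(ii)\Rightarrow(iii)$ when $r>1$, however, your route is genuinely different from the paper's. The paper fixes $\xi=\frac{1}{r}+iy$ and builds explicit continuous test functions $f_{a,a',x}$ approximating $(x-u)^{-\frac{1}{r}+\varepsilon-iy}$ on $(0,a)$, applies Pietsch's domination, passes to the limit $a'\to a$ and $a\to x$, then integrates in $x$ and lets $\varepsilon\to0$ to force the contradiction $\varepsilon^{1-r}\lesssim1$. You instead use the Pietsch measure $\nu$ structurally: the point evaluation $f\mapsto V_\xi(f)(x)$ extends to $L^r(\nu)$, is represented by some $k_x\in L^{r'}(\nu)$, and the identity $k_x\,d\nu=\phi_\xi(x-\cdot)\ind_{(0,x)}\,du$ forces Lebesgue measure to be absolutely continuous with respect to $\nu$, hence the density $w$ in $d\nu=w\,du+d\nu_s$ satisfies $w>0$ a.e.; the $L^{r'}(\nu)$ bound on $k_x$ then reads $\int_0^x(x-u)^{(\Re(\xi)-1)r'}w(u)^{1-r'}\,du\less C'$ and integrating in $x$ gives divergence whenever $\Re(\xi)\less\frac{1}{r}$. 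Your approach avoids constructing explicit test functions and handles all $\xi$ with $\Re(\xi)\less\frac{1}{r}$ at once (no preliminary reduction via the semigroup property), while the paper's hands-on construction makes the blow-up completely explicit. For part~(b), $(ii)\Rightarrow(iii)$, the paper simply quotes part~(a) for every $r>1$ and passes to the limit $r\to1$; your direct $L^\infty(\nu)$ argument ($w(u)\gtr(x-u)^{\Re(\xi)-1}/(C|\Gamma(\xi)|)$ and letting $x\downarrow u$ through a countable set) is independent of~(a) and arguably cleaner.
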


\begin{proof}
$(a)$ Let $r>1$. 

$(iii)\implies (i)$: Assume that $\xi\in\C_{\frac{1}{r}}$. Thanks to Proposition~\ref{bound} $(iv)$, the operator $V_{\xi}:L^r(0,1)\to C_0([0,1])$ is bounded. In particular, we have the following commutative diagram
\[
\xymatrix{
    C([0,1]) \ar[r]^{V_{\xi}}\ar[d]_{i_r} &C_0([0,1]) \\
    L^r(0,1) \ar[ru]_{V_{\xi}} & 
  }
  \]
Since, by \eqref{Diestel-eq345EE34}, the operator $i_r:C([0,1])\to L^r(0,1)$ is absolutely $r$-integral, we deduce that the operator $V_\xi:C([0,1])\to C_0([0,1])$ is also $r$-integral. 

$(i)\implies (ii)$: Apply \eqref{Diestel-eq7}.

$(ii)\implies (iii)$: According to the semigroup property and the ideal property of the class of absolutely $r$-summing operators, it is sufficient to check that for $\xi=\frac{1}{r}+iy$, $y\in\mathbb R$, the operator $V_\xi:C([0,1])\to C_0([0,1])$ is not nuclear. We use reductio ad absurdum and we assume that this operator is nuclear. According to Pietsch's theorem (see Theorem~\ref{Pietsch-theorem}), there exists a regular Borel probability measure $\nu$ on $[0,1]$ such that, for every $f\in C([0,1])$, we have 
\begin{equation}\label{eq:pietsch-domination}
\|V_\xi(f)\|_\infty \less \pi_r(V_\xi) \left(\int_0^1 |f(u)|^r\,d\nu(u)\right)^{1/r}.
\end{equation}
Fix $0<\varepsilon<\frac{1}{r}$ and $x\in (0,1)$. Consider also $0<a<a'<a_0<x$ and define the function $f=f_{a,a',x}$ on $(0,1)$ by 
\[
f(u)=\begin{cases}
(x-u)^{-\frac{1}{r}+\varepsilon-iy}&\mbox{if }u\in (0,a)\\
(x-u)^{-iy}(\alpha u+\beta)& \mbox{if }u\in (a,a')\\
0 & \mbox{if }u\in (a',1),
\end{cases}
\]
where 
\[
\alpha=\frac{1}{a-a'}(x-a)^{-\frac{1}{r}+\varepsilon} \quad\mbox{and}\quad \beta=-\alpha a'.
\] 
It is easy to see that $f$ is continuous on $(0,1)$ and $|f(u)|\leq |x-u|^{-\frac{1}{r}+\varepsilon}$. 
According to \eqref{eq:pietsch-domination}, we have 
\[
|V_\xi(f)(x)|\less  \pi_r(V_\xi) \left(\int_0^{a'} |f(u)|^r\,d\nu(u)\right)^{1/r}.
\]
Observe that 
\[
V_\xi(f)(x)=\frac{1}{\Gamma(\xi)}\int_0^{a'}(x-s)^{\frac{1}{r}+iy-1}f(s)\,ds,
\]
whence 
\begin{eqnarray*}
\left| \int_0^{a'}(x-s)^{\frac{1}{r}+iy-1}f(s)\,ds \right| &\lesssim &\left(\int_0^{a'}|f(u)|^r\,d\nu(u)\right)^{1/r}\\
&\less & \left(\int_0^{x} (x-u)^{-1+r\varepsilon}\,d\nu(u)\right)^{1/r}.
\end{eqnarray*}
Since 
\[
\lim_{\substack{a'\to a\\
>}}\chi_{(0,a')}(s)(x-s)^{\frac{1}{r}+iy-1}f(s)=\chi_{(0,a)}(s)(x-s)^{\varepsilon-1},
\]
and 
\[
|\chi_{(0,a')}(s)(x-s)^{\frac{1}{r}+iy-1}f(s)|\leq \chi_{(0,a_0)}(s)|x-s|^{\varepsilon-1},
\]
we can apply the dominated Lebesgue convergence theorem to obtain

\[
\frac{x^\varepsilon-(x-a)^{\varepsilon}}{\varepsilon}\lesssim  \left(\int_0^{x} (x-u)^{-1+r\varepsilon}\,d\nu(u)\right)^{1/r}.
\]
We now let $a$ goes to $x$ to get
\[
\frac{x^{r\varepsilon}}{\varepsilon^r}\lesssim \int_0^{x} (x-u)^{-1+r\varepsilon}\,d\nu(u).
\]
Since this inequality is satisfied for any $x\in (0,1)$, we integrate it and use the Fubini--Tonelli theorem for the second integral. Thus we have
\begin{eqnarray*}
\frac{1}{(1+r\varepsilon)\varepsilon^r}&\lesssim& \int_0^1\left(\int_u^1 (x-u)^{-1+r\varepsilon}\,dx\right)\,d\nu(u)\\
&=&\frac{1}{r\varepsilon}\int_0^1 (1-u)^{r\varepsilon}\,d\nu(u)\\
&\leq & \frac{1}{r\varepsilon},
\end{eqnarray*}
where the last inequality follows from the fact that $(1-u)^{r\varepsilon}\less 1$ and $\nu$ is a probability measure. Since $r>1$, we get the desired contradiction letting $\varepsilon$ goes to $0$. 

$(b)$ Let $r=1$.

$(ii)\implies (iii)$: Assume that $V_\xi:C([0,1])\to C_0([0,1])$ is absolutely $1$-summing. According to \eqref{Diestel-eq2}, for every $r>1$, the operator $V_\xi:C([0,1])\to C_0([0,1])$ is absolutely $r$-summing. If follows from $(a)$ that $\Re(\xi)>\frac{1}{r}$. Letting now $r\to 1$ implies that $\Re(\xi)\gtr 1$. 

$(i)\implies (ii)$: Apply \eqref{Diestel-eq7}.

$(iii)\implies (i)$: Assume that $\xi\in\overline{\C_1}$. According to Proposition~\ref{bound} $(v)$, the operator $V_{\xi}:L^1(0,1)\to C_0([0,1])$ is bounded. In particular, we have the following commutative diagram
\[
\xymatrix{
    C([0,1]) \ar[r]^{V_{\xi}}\ar[d]_{i_1} &C_0([0,1]) \\
    L^1(0,1) \ar[ru]_{V_{\xi}} & 
  }
  \]
Since, by  \eqref{Diestel-eq345EE34}, the operator $i_1:C([0,1])\to L^1(0,1)$ is $1$-integral, we deduce that the operator $V_\xi:C([0,1])\to C_0([0,1])$ is  $1$-integral. 

\end{proof}

%%%%%%%%%%%%%%%%%%%%%%%%%%%%%%%%%%%%%%%%%%%%%%%%%%%%%%%%%%%
%%%%%%%%%%%%%%%%%%%%%%%%%%%%%%%%%%%%%%%%%%%%%%%%%%%%%%%%%

\end{document}